\documentclass[11pt,reqno]{amsart}

\usepackage[T1]{fontenc}
\usepackage{lmodern}
\usepackage{microtype}
\usepackage{amsmath,amssymb,amsthm,mathtools}
\usepackage{aliascnt}
\usepackage{esint}
\usepackage{enumitem}
\usepackage{graphicx}
\usepackage{xcolor}
\usepackage[colorlinks=true,
  linkcolor=blue!55!black,
  citecolor=blue!55!black,
  urlcolor=blue!55!black,
  pdftitle={Sharp Hausdorff Bounds for the Interior Singular Set of Convex k-Hessian Solutions},
  pdfauthor={Xiyu Hu},
  pdfsubject={Fully nonlinear elliptic PDE, convexity, and geometric measure theory},
  pdfkeywords={k-Hessian equation, singular set, Hausdorff measure, partial regularity, Sobolev regularity, W2,1}]{hyperref}
\usepackage[nameinlink,capitalize,noabbrev]{cleveref}
\usepackage[margin=1.10in]{geometry}

\allowdisplaybreaks
\numberwithin{equation}{section}

\newtheorem{theorem}{Theorem}[section]

\newaliascnt{proposition}{theorem}
\newtheorem{proposition}[proposition]{Proposition}
\aliascntresetthe{proposition}

\newaliascnt{lemma}{theorem}
\newtheorem{lemma}[lemma]{Lemma}
\aliascntresetthe{lemma}

\newaliascnt{corollary}{theorem}
\newtheorem{corollary}[corollary]{Corollary}
\aliascntresetthe{corollary}

\newaliascnt{claim}{theorem}

\aliascntresetthe{claim}

\theoremstyle{definition}
\newaliascnt{definition}{theorem}

\aliascntresetthe{definition}

\newaliascnt{remark}{theorem}
\newtheorem{remark}[remark]{Remark}
\aliascntresetthe{remark}

\crefname{equation}{equation}{equations}
\crefname{theorem}{theorem}{theorems}
\crefname{proposition}{proposition}{propositions}
\crefname{lemma}{lemma}{lemmas}
\crefname{corollary}{corollary}{corollaries}
\crefname{remark}{remark}{remarks}
\crefname{definition}{definition}{definitions}
\crefname{claim}{claim}{claims}
\crefname{section}{section}{sections}

\newcommand{\R}{\mathbb{R}}
\newcommand{\Sym}{\operatorname{Sym}}
\newcommand{\tr}{\operatorname{tr}}
\newcommand{\aff}{\operatorname{aff}}
\newcommand{\rank}{\operatorname{rank}}
\newcommand{\dist}{\operatorname{dist}}

\newcommand{\Gr}{\operatorname{Gr}}
\newcommand{\cH}{\mathcal{H}}
\newcommand{\cP}{\mathcal{P}}
\newcommand{\cM}{\mathcal{M}}
\newcommand{\cC}{\mathcal{C}}
\newcommand{\Reg}{\operatorname{Reg}}
\newcommand{\Sing}{\operatorname{Sing}}
\newcommand{\fintavg}{\fint}
\newcommand{\eps}{\varepsilon}
\newcommand{\ol}{\overline}
\newcommand{\ip}[2]{\langle #1,#2\rangle}

\title[Sharp Hausdorff bounds for interior singular sets]
{Sharp Hausdorff Bounds for the Interior Singular Set of Convex $k$-Hessian Solutions}

\author{Xiyu Hu}
\date{}

\subjclass[2020]{35J60, 35B65, 35D40, 28A78, 52A20}
\keywords{$k$-Hessian equation, convex viscosity solution, singular set, sharp Hausdorff bound, partial regularity, Sobolev regularity, sectional mean value}

\begin{document}

\begin{abstract}
Let $2\le k\le n$, let $\Omega\subset\R^n$ be open and convex, and let $u$ be a convex viscosity solution of
\[
\sigma_k(D^2u)=1\qquad\text{in }\Omega.
\]
We prove that the set on which $u$ fails to be locally $C^2$ has vanishing $(n-1)$-dimensional Hausdorff measure.  In the intermediate range $3\le k<n$, this gives a codimension-one refinement of the known almost-everywhere partial regularity, and the exponent is sharp.  More generally, for a convex viscosity subsolution of $\sigma_k(D^2u)\ge\lambda>0$, we obtain Hausdorff bounds for strata defined by the affine dimension of all supporting contact sets.  The proof combines a support-dependent Chou--Wang barrier argument, an estimate for the product of the smallest $k$ semiaxes of a John ellipsoid, and Mooney's convex section-covering theorem.  As a direct analytical consequence, the full distributional Hessian is absolutely continuous and $u\in W^{2,1}_{\mathrm{loc}}(\Omega)$, yielding a $k$-Hessian counterpart of the $W^{2,1}$ regularity known for singular Monge--Amp\`ere solutions.  In a logically separate structural part, we characterize the distinguished number of flat directions, $n-k+1$, by an asymptotic infimum mean-value formula over affine sections, and explain how this mean-value heuristic leads to the supporting-contact geometry used in the proof.
\end{abstract}

\maketitle

\section{Introduction}

For $A\in\Sym(n)$, let
\[
\sigma_k(A)=\sum_{1\le i_1<\cdots<i_k\le n}
\lambda_{i_1}(A)\cdots\lambda_{i_k}(A)
\]
denote the $k$th elementary symmetric polynomial of the eigenvalues of $A$.  The associated G\aa rding cone is
\[
\Gamma_k:=\{A\in\Sym(n):\sigma_j(A)>0\text{ for }1\le j\le k\}.
\]
We use the standard admissible viscosity interpretation of the equation, recalled in \Cref{sec:preliminaries}.

The regularity theory of the $k$-Hessian equation interpolates between the Laplace equation, $k=1$, and the Monge--Amp\`ere equation, $k=n$.  The classical Dirichlet theory, nonclassical solution theory, and weak Hessian-measure framework were developed in foundational work including \cite{CNS1985,Urbas1990,Trudinger1995,Trudinger1997,TrudingerWang1999}.  Singular convex solutions occur for $k\ge3$, while positive right-hand side forces substantial geometric nondegeneracy.  For the Monge--Amp\`ere equation, Mooney proved that the non-strictly-convex set has vanishing $\cH^{n-1}$ measure and constructed examples showing that the exponent is optimal \cite{Mooney2015}.  The convexity and rank structure of Hessian equations has also been developed through full-rank and constant-rank results, including \cite{GuanMa2003,CaffarelliGuanMa2007}; a strong maximum principle for generalized Monge--Amp\`ere-type solutions at strictly convex points was proved in \cite{JianTu2025}.  For the quadratic Hessian equation, convex viscosity solutions of the constant-right-hand-side equation are smooth \cite{Mooney2021,MooneyRemarks2025}, and a recent result gives $C^2$ regularity for positive Lipschitz right-hand side \cite{ChenJianTuZhou2026}.  In contrast, for general $k$, Fan proved that a $k$-convex viscosity solution with positive Lipschitz right-hand side is $C^{2,\alpha}$ outside a closed Lebesgue-null set, using a nonhomogeneous extension of Savin's small-perturbation theorem \cite{Fan2025,Savin2007}.

The purpose of this paper is to obtain the sharp codimension-one Hausdorff estimate for convex solutions throughout the nonlinear range $2\le k\le n$.  The new range is $3\le k<n$.

\begin{theorem}[Codimension-one partial regularity]\label{thm:main}
Let $2\le k\le n$, let $\Omega\subset\R^n$ be open and convex, and let $u\in C(\Omega)$ be a convex viscosity solution of
\begin{equation}\label{eq:main-equation}
\sigma_k(D^2u)=1
\end{equation}
in the viscosity sense.  Define
\[
\Reg(u):=\{x\in\Omega:u\text{ is }C^2\text{ in a neighborhood of }x\},
\qquad
\Sing(u):=\Omega\setminus\Reg(u).
\]
Then
\begin{equation}\label{eq:main-Hausdorff}
\cH^{n-1}_{\mathrm{loc}}\bigl(\Sing(u)\bigr)=0.
\end{equation}
At every point of $\Reg(u)$, the solution is in fact smooth.
\end{theorem}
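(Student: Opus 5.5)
The proof will go through the geometry of supporting contact sets. For $x\in\Omega$ and a supporting affine function $\ell$ of $u$ at $x$, consider the contact set $\{u=\ell\}$, which is convex. Let $d(x)$ be the largest affine dimension of all such contact sets at $x$. Write $\Sigma:=\{x\in\Omega: d(x)\ge 1\}$ for the set of points where $u$ fails to be strictly convex. The argument rests on two inputs.
\begin{enumerate}[label=(\roman*)]
\item On $\Omega\setminus\Sigma$, i.e.\ where $u$ is strictly convex, $u$ is smooth in a neighborhood.
\item $\cH^{n-1}_{\mathrm{loc}}(\Sigma)=0$.
\end{enumerate}
Together these give $\Sing(u)\subset\Sigma$, hence \eqref{eq:main-Hausdorff}.

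\textbf{Step (i).} At a strictly convex point $x_0$, small sections $S_h=\{u<\ell+h\}$ are compactly contained in $\Omega$. I normalize $S_h$ by the John ellipsoid affine map $A_h$. The rescaled function solves $\sigma_k$ of its Hessian $=c_h$, where $c_h$ is governed by the products of the $k$ smallest semiaxes of the John ellipsoid. The semiaxis estimate from the abstract, together with strict convexity, should bound $c_h$ above and below. Compactness of normalized solutions then yields closeness to a smooth solution in the right scale. This lets me apply Savin-type small perturbation, in Fan's form \cite{Fan2025}, to obtain $C^{2,\alpha}$ near $x_0$. Evans--Krylov and bootstrapping then give smoothness, which also proves the final sentence of the theorem.

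\textbf{Step (ii).} This is the heart of the proof. First, I would show via a support-dependent Chou--Wang barrier that a subsolution of $\sigma_k\ge\lambda$ cannot have a supporting contact set of affine dimension $\ge n-k+1$ in the interior. The idea is to compare with a barrier that is degenerate in the $n-k+1$ flat directions and quadratic in the remaining $k-1$. Since $\sigma_k$ of such a Hessian vanishes, the comparison is contradicted. Hence $1\le d(x)\le n-k$ on $\Sigma$.

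I would then stratify $\Sigma=\bigcup_{j=1}^{n-k}\Sigma_j$ by $d(x)=j$. For each stratum I would adapt Mooney's convex section-covering theorem \cite{Mooney2015}. Cover the compact pieces of $\Sigma_j$ by small sections centered on contact sets. A section around a $j$-dimensional contact set has $j$ long axes, and its remaining axes are small. The semiaxis product estimate, which says the $k$ smallest semiaxes have product $\gtrsim h^{k/2}$ from $\sigma_k\ge\lambda$, controls how thin the section can be. Counting sections in Mooney's Vitali-type covering should give $\cH^{n-j}$-type bounds on $\Sigma_j$. These bounds should in fact give $\cH^{s}(\Sigma_j)=0$ for every $s>n-j-\text{something}$, and in any case $\cH^{n-1}(\Sigma_j)=0$ once $j\ge1$. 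The vanishing (rather than finiteness) comes, as in Mooney's argument, from the quantitative decay of sections along the flat directions.

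\textbf{Main obstacle.} I expect the hardest part to be transferring Mooney's covering argument from Monge--Amp\`ere, where the volume of sections is controlled by $|S_h|\sim h^{n/2}$, to $\sigma_k$. For $\sigma_k$, only the $k$ smallest semiaxes are controlled, and the remaining $n-k$ may degenerate. My first attempt would be to run the covering only in the $k$-dimensional "thin" directions transverse to the contact sets. In that setting I would use the semiaxis product estimate as a substitute for the Monge--Amp\`ere volume estimate, and check that the resulting count still beats the $(n-1)$-dimensional Hausdorff gauge.
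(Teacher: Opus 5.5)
Your Step (ii) rests on two claims that are false, so the gap is structural rather than technical.

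\textbf{The exceptional set is the wrong one.} First, for $k<n$ the non-strictly-convex set $\Sigma$ is not $\cH^{n-1}$-null. The function $u=\frac12(x_1^2+\cdots+x_k^2)$ is a smooth convex solution of $\sigma_k(D^2u)=1$ whose tangent-plane contact set at every point is an $(n-k)$-plane, so $\Sigma=\Omega$.

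Second, subsolutions of $\sigma_k\ge\lambda$ can have supporting contact sets of dimension $n-k+1$. For $k=n\ge3$ your claim would make every convex solution of $\det D^2u=1$ strictly convex, which Pogorelov's example contradicts. Cylindrical extensions $u(x,z)=w(x)$ of such examples realize contact dimension exactly $n-k+1$ for $3\le k<n$. A rank-$(k-1)$ quadratic gives a contradiction only if it touches $u$ from above. That fails wherever $u$ grows faster than quadratically off the contact set. The only exclusion of this kind is for full-dimensional contact, where $u$ is affine on an open set.

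\textbf{The barrier works in the opposite regime.} It produces regularity, not a contradiction. Suppose \emph{some} supporting contact set at $x_0$ lies in a subspace $Z$ with $\dim Z\le n-k$. Then the $k$-admissible saddle $\delta(A|y|^2-|z|^2+\alpha r^2)$, with $y\in Z^\perp$ and $z\in Z$, lies below $u$ on $\partial B_r$ and above it at $x_0$. Insert it into the Chou--Wang Pogorelov estimate for the smooth Dirichlet approximants, using Trudinger's gradient bound. This gives a uniform Hessian bound, hence smoothness near $x_0$. Consequently $\Sing(u)$ lies in the set $\cC_{n-k+1}(u)$ of points where \emph{every} supporting contact set has dimension $\ge n-k+1$. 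This also gives your Step (i) directly, since a strictly convex point has contact dimension $0$.

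Your own Step (i) does not work as written. For $k<n$, $\sigma_k(A^TMA)$ is not a multiple of $\sigma_k(M)$ for a John normalization $A$. So the rescaled functions do not solve a constant-right-hand-side $k$-Hessian equation, and there is no compact class to which Savin--Fan small perturbation could apply.

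\textbf{The covering step has its key inequality backwards and targets the wrong stratum.} Test $\sigma_k\ge\lambda$ from above with the John paraboloid $h|A_h^{-1}(x-z_h)|^2$ of $K_h=\{u\le h\}\cap\overline{B}_R$. This gives an \emph{upper} bound $a_{n-k+1}(h)\cdots a_n(h)\le C\lambda^{-1/2}h^{k/2}$: positive density forces sections to be thin. Mooney's criterion needs exactly such upper bounds on section volumes.

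The argument that works runs on $\cC_m$ with $m\ge n-k+1$:
\begin{enumerate}
\item An $m$-dimensional contact ball of radius $\rho$ forces $a_m(h)\ge\rho/n$, hence $a_{m+1}\cdots a_n\lesssim\rho^{-(m-n+k)}h^{k/2}$.
\item For the perturbation $v=u+\frac12|x|^2$, the section $S^v_h$ lies in $K_h\cap B_{\sqrt{2h}}$.
\item So $|S^v_h|\lesssim h^{m/2}h^{k/2}=h^{(n+s)/2}$ with $s=m+k-n\ge1$.
\item The higher-codimension section-covering theorem then gives $\cH^{n-s}(\cC_m)=0$, in particular $\cH^{n-1}(\cC_{n-k+1})=0$.
\end{enumerate}

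Your strata $d=j\in\{1,\dots,n-k\}$ consist of points that are already regular and can have positive Lebesgue measure. No covering argument can make them $\cH^{n-1}$-null.
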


For $k=n$, \Cref{thm:main} recovers the Hausdorff conclusion in \cite{Mooney2015}; for $k=2$, it is weaker than full regularity.  To the best of the author's knowledge, the conclusion is new for $3\le k<n$.  The exponent cannot be replaced by $n-1-\delta(n,k)$: cylindrical extensions of Mooney's singular Monge--Amp\`ere examples have singular sets of Hausdorff dimension arbitrarily close to $n-1$; see \Cref{sec:sharpness}.

A significant analytical consequence of \Cref{thm:main} is Sobolev regularity of the full distributional Hessian.  For strictly convex Alexandrov solutions of the Monge--Amp\`ere equation with density bounded above and below, De Philippis and Figalli proved that $D^2u\in L\log^m L_{\mathrm{loc}}$ for every finite $m$, and hence $u\in W^{2,1}_{\mathrm{loc}}$ \cite{DePhilippisFigalli2013}.  De Philippis, Figalli, and Savin, and independently Schmidt, subsequently obtained $W^{2,1+\varepsilon}_{\mathrm{loc}}$ estimates \cite{DePhilippisFigalliSavin2013,Schmidt2013}.  These advances build on Caffarelli's localization and section-normalization theory \cite{Caffarelli1990Localization,Caffarelli1990W2p} and exploit the full covariance of the Monge--Amp\`ere equation under volume-preserving affine transformations.  Mooney developed a complementary geometric route for singular Monge--Amp\`ere solutions, obtaining $W^{2,1}$ estimates from the geometry and size of the non-strictly-convex set \cite{MooneyW21Estimate2015,Mooney2015}.

For $2\le k<n$, the $k$-Hessian operator is not invariant under general volume-preserving affine changes of variables, so the affine normalization underlying the Monge--Amp\`ere theory does not transfer directly.  Our approach follows and extends Mooney's geometric mechanism.  Once \Cref{thm:main} gives $\cH^{n-1}_{\mathrm{loc}}(\Sing(u))=0$, a codimension-one growth bound for the positive matrix-valued Hessian measure rules out concentration on the singular set and yields the following corollary.  This provides a $k$-Hessian counterpart of the $W^{2,1}$ conclusion without using full affine invariance.  The argument is qualitative, however: it does not presently yield $L\log L$ or $W^{2,1+\varepsilon}$ estimates.

\begin{corollary}[$W^{2,1}$ regularity]\label{cor:W21-intro}
Under the assumptions of \Cref{thm:main},
\[
u\in W^{2,1}_{\mathrm{loc}}(\Omega).
\]
\end{corollary}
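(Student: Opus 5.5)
The plan is to show that the distributional Hessian $D^2u$, which for a convex function is a locally finite $\Sym(n)$-valued nonnegative Radon measure $\mu=(\mu_{ij})$, has no singular part with respect to Lebesgue measure. Let $\mu=\mu^{a}+\mu^{s}$ be its Lebesgue decomposition. By \Cref{thm:main}, $u$ is smooth on the open set $\Reg(u)$, so $\mu\llcorner\Reg(u)=D^2u\,dx$ there. Hence $\mu^{s}$ is concentrated on $\Sing(u)$, which is relatively closed and satisfies $\cH^{n-1}_{\mathrm{loc}}(\Sing(u))=0$ (in particular it is Lebesgue-null). It therefore suffices to prove $\mu(\Sing(u)\cap K)=0$ for every compact $K\subset\Omega$. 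Since $\mu$ is positive semidefinite, it is enough to control the trace measure $\tr\mu=\Delta u$, as $|\mu_{ij}|\le \tr\mu$ in the sense of measures.

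The key step is a codimension-one growth bound for $\Delta u$: for every compact $K\subset\Omega$ there is $C=C(K,u)$ such that
\[
\Delta u\bigl(B_r(x)\bigr)\le C\,r^{n-1}\qquad\text{for all }x\in K,\ r<r_0 .
\]
This follows from the divergence theorem for convex functions. Mollify, $u_\eps=u*\rho_\eps$, so that
\[
\int_{B_r(x)}\Delta u_\eps=\int_{\partial B_r(x)}\partial_\nu u_\eps\,d\cH^{n-1}\le \|Du_\eps\|_{L^\infty}\,|\partial B_r|\le C\,\mathrm{Lip}(u;K')\,r^{n-1}.
\]
Here $K'$ is a slightly larger compact set, and $u$ is locally Lipschitz because it is convex. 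Letting $\eps\to0$ with weak convergence of measures, and using upper semicontinuity on closed balls, gives the bound.

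Given this growth bound, the standard covering argument finishes the proof. Fix a compact $K$ and $\delta>0$. Since $\cH^{n-1}(\Sing(u)\cap K)=0$, cover $\Sing(u)\cap K$ by balls $B_{r_i}(x_i)$ with $r_i<\delta$, centers within distance $r_i$ of $\Sing(u)\cap K$, and $\sum r_i^{n-1}<\delta$. Replace each ball by a concentric double centered in a slightly larger compact set so that the growth bound applies. Then
\[
\tr\mu(\Sing(u)\cap K)\le \sum_i \Delta u\bigl(B_{2r_i}(y_i)\bigr)\le C2^{n-1}\delta .
\]
Letting $\delta\to0$ gives $\mu(\Sing(u)\cap K)=0$, so $\mu^{s}=0$ and $D^2u=\mu^{a}\in L^1_{\mathrm{loc}}$. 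Since $Du\in L^\infty_{\mathrm{loc}}$, this gives $u\in W^{2,1}_{\mathrm{loc}}(\Omega)$.

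The main obstacle is only bookkeeping rather than analysis: $\Sing(u)$ is merely relatively closed and the Hausdorff bound is local. This is handled by working on compact exhaustions and checking that the growth constant is uniform on a neighborhood of $K$. The real depth sits entirely in \Cref{thm:main}. Note that the argument uses only convexity (via the Lipschitz bound) and the smoothness of $u$ on $\Reg(u)$; the equation enters solely through \Cref{thm:main}.
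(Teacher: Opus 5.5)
Your argument is correct and is essentially the paper's proof. The paper likewise proves $\Delta u(B_r(x))\le C_n\operatorname{Lip}(u;B_{2r}(x))\,r^{n-1}$, though it tests against a cutoff $\eta$ and uses $\int\Delta\eta=0$ rather than mollifying and applying the divergence theorem; it then uses a covering argument and $|D^2u|\le\Delta u$ to show the Hessian measure does not charge the $\cH^{n-1}$-null set $\Sing(u)$, which carries the singular part. One small correction: the limit $\eps\to0$ needs lower semicontinuity of weakly converging nonnegative measures on open balls, $\Delta u(B_r(x))\le\liminf_{\eps}\int_{B_r(x)}\Delta u_\eps$; upper semicontinuity on closed balls goes the wrong way for an upper bound.
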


\subsection{Supporting contact strata}

The proof is organized by the geometry of supporting affine functions.  We use the convex-analytic subdifferential
\[
\partial u(x):=\{p\in\R^n:u(y)\ge u(x)+p\cdot(y-x)\text{ for every }y\in\Omega\}.
\]
For $x\in\Omega$ and $p\in\partial u(x)$, write
\[
\ell_{x,p}(y):=u(x)+p\cdot(y-x)
\]
and, whenever $B_r(x)\Subset\Omega$, define the local contact set
\begin{equation}\label{eq:contact-set}
K_{x,p}(r):=\{y\in B_r(x):u(y)=\ell_{x,p}(y)\}.
\end{equation}
The sets $K_{x,p}(r)$ are convex and nested as $r\downarrow0$.  Consequently, the integer-valued function
\[
r\longmapsto \dim\aff K_{x,p}(r)
\]
is nonincreasing and eventually constant.  We define the local contact dimension by
\begin{equation}\label{eq:contact-dimension}
d_u(x,p):=\lim_{r\downarrow0}\dim\aff K_{x,p}(r).
\end{equation}
For $m\in\{0,\ldots,n\}$, let
\begin{equation}\label{eq:contact-stratum}
\cC_m(u):=
\{x\in\Omega:d_u(x,p)\ge m\text{ for every }p\in\partial u(x)\}.
\end{equation}
The quantifier ``for every supporting slope'' is essential because the convex covering theorem used below is formulated for all sections at a point.  The convexity of $\Omega$ is used here only to make this global subdifferential convenient; by \Cref{lem:local-global-subgradient} below, a slope supporting on any neighborhood of an interior point automatically supports throughout $\Omega$.

The geometric part of the argument only requires a positive viscosity lower bound.

\begin{theorem}[Contact stratification]\label{thm:stratification}
Let $2\le k\le n$, and let $u\in C(\Omega)$ be convex and a viscosity subsolution of
\begin{equation}\label{eq:positive-subsolution}
\sigma_k(D^2u)\ge\lambda>0.
\end{equation}
Set
\begin{equation}\label{eq:q-def}
q:=n-k+1.
\end{equation}
For each integer $m$ with $q\le m\le n-1$,
\begin{equation}\label{eq:strata-bound}
\cH^{2n-m-k}_{\mathrm{loc}}\bigl(\cC_m(u)\bigr)=0.
\end{equation}
Moreover, $\cC_n(u)=\varnothing$.
\end{theorem}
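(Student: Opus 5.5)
Throughout, $q=n-k+1$. The plan has three ingredients, matching the abstract: a barrier argument, an ellipsoid estimate, and Mooney's covering theorem. We treat $\cC_n(u)=\varnothing$ first, then the Hausdorff bound for $q\le m\le n-1$.

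\textbf{The top stratum.} If $x\in\cC_n(u)$, then for any $p\in\partial u(x)$ the contact set $K_{x,p}(r)$ has nonempty interior for every small $r$. On that open set $u=\ell_{x,p}$ is affine. Touching $u$ from above at an interior point by the affine function itself, which has $D^2=0\notin\Gamma_k$, contradicts the subsolution inequality \eqref{eq:positive-subsolution}.

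\textbf{The barrier step.} The first substantive step is a support-dependent version of the Chou--Wang barrier. Fix a point $x$, a slope $p\in\partial u(x)$, and a section
\[
S_h=\{u<\ell_{x,p}+h\},
\]
normalized through its John ellipsoid $E_h$ with semiaxes $a_1\le\cdots\le a_n$. We compare $u-\ell_{x,p}$ with a quadratic $\sum_i y_i^2/a_i^2$, adapted to the ellipsoid, on $S_h$. The aim is the estimate
\[
\prod_{i\le k} a_i\;\gtrsim\; \lambda^{-1/2}\,h^{k/2}.
\]
The intended mechanism is that the $\sigma_k$ of that quadratic scales like the largest $k$-fold product of the $a_i^{-2}$, namely $\prod_{i\le k}a_i^{-2}$. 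The maximum principle against the subsolution then forces $\lambda\prod_{i\le k}a_i^2\lesssim h^{k}$. This is the "smallest $k$ semiaxes" estimate mentioned in the abstract.

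\textbf{Contact dimension and sections.} Now suppose $x\in\cC_m(u)$. Then for every supporting $p$, the section $S_h$ contains a fixed $m$-dimensional convex piece of the contact set, of size $\sim r$ independent of $h$. Hence at least $m$ of the semiaxes stay bounded below as $h\to0$. The remaining $n-m$ axes are controlled by the barrier product bound: since $m\ge q$, i.e.\ $n-m\le k-1$, at least one large axis enters the product over the smallest $k$ axes. This yields a volume-type bound on $S_h$ which, expressed as an $\varepsilon$-covering statement, gives effective dimension $2n-m-k$. We then feed these section shapes into Mooney's convex section-covering theorem. Because that theorem is formulated for all sections at a point, it is essential that the bound holds for every $p\in\partial u(x)$, which is exactly the quantifier in \eqref{eq:contact-stratum}. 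By \Cref{lem:local-global-subgradient}, local supports suffice. The covering theorem converts the geometry into an $\cH^{2n-m-k}$-null bound, via a Vitali-type covering with sections of height $h\to0$ and a Lebesgue-density, or $o(\cdot)$, argument to get measure zero rather than merely finite measure.

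\textbf{Main obstacle.} I expect the main difficulty to be the exponent bookkeeping: checking that the bound on the product of the smallest $k$ semiaxes, combined with the $m$ bounded-below contact directions, gives exactly $2n-m-k$ in the covering theorem. A secondary difficulty is the lack of affine invariance. The barrier must be built in the Euclidean frame of the ellipsoid, and I would check that the $\sigma_k$ of a diagonal quadratic in rotated coordinates is still computed by the elementary symmetric functions of the $a_i^{-2}$. Rotations preserve $\sigma_k$, so only the scalings need care. If the direct estimate fails, I would first try iterating over dyadic heights $h$ and using monotonicity of the sections to absorb constants.
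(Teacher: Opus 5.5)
Your outline has the right ingredients: the top stratum, an ellipsoidal-paraboloid test bounding the product of the smallest $k$ John axes, the contact piece pinning $m$ axes from below, and Mooney's criterion with $s=m+k-n$. But there is a genuine gap where you ``feed these section shapes into Mooney's convex section-covering theorem.''

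That theorem needs a convex function $v$ with $|S^v_{h,p}(x)|\le C_{x,p}h^{(n+s)/2}=C_{x,p}h^{(m+k)/2}$ for every $p\in\partial v(x)$. The sections $S_h=\{u<\ell_{x,p}+h\}$ of $u$ itself contain the fixed $m$-dimensional contact piece, so they do not shrink to $x$. Indeed, $m$ of their John axes stay $\ge\rho/n$, so your bounds give only $|S_h\cap\ol B_R|\lesssim R^m\rho^{-(m+k-n)}\lambda^{-1/2}h^{k/2}$. This falls short of the required decay by exactly the factor $h^{m/2}$. The phrase ``expressed as an $\varepsilon$-covering statement'' does not produce a convex function whose sections have the needed volume.

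The missing idea is the strictly convex perturbation $v(y)=u(y)+\tfrac12|y|^2$. Its sections satisfy $S^v_{h,p+x}(x)\subset\{u<\ell_{x,p}+h\}\cap B_{\sqrt{2h}}(x)$. So once $\sqrt{2h}\le\rho/(2n)\le a_m(h)/2$, the $m$ long axes are truncated at $\sqrt{2h}$, and \Cref{lem:ellipsoid-ball} gives
\[
|S^v_{h,p+x}(x)|\lesssim h^{m/2}\,a_{m+1}(h)\cdots a_n(h)\lesssim\rho^{-(m+k-n)}\lambda^{-1/2}h^{(m+k)/2},
\]
where $a_{m+1}(h)\cdots a_n(h)$ is the product of the $n-m$ shortest axes. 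This is where the extra $h^{m/2}$ comes from. You also need $\partial v(x)=\partial u(x)+x$ (\Cref{lem:quadratic-subgradient-shift}). It converts the quantifier over all $p\in\partial u(x)$ in $\cC_m(u)$ into the quantifier over all slopes of $v$ that the covering theorem demands.

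There are also some smaller corrections.
\begin{enumerate}
\item Your stated aim $\prod_{i\le k}a_i\gtrsim\lambda^{-1/2}h^{k/2}$ has the wrong direction. The test gives the upper bound $\prod_{i\le k}a_i\lesssim\lambda^{-1/2}h^{k/2}$, as your next sentence says.
\item That test requires the height factor. The paraboloid is $h|A_h^{-1}(y-z_h)|^2$, built from a John ellipsoid of the compact body $\{u\le\ell_{x,p}+h\}\cap\ol B_R$. It is lowered until it touches from above at an interior point.
\item No Chou--Wang barrier is involved in this theorem; that enters only in \Cref{thm:contact-regularity}.
\item The claim that $m$ axes stay bounded below needs the width/min--max argument of \Cref{lem:relative-ball-axis}, since the contact directions need not be principal axes.
\item For $\cC_n(u)=\varnothing$, an affine function is not an admissible test under the paper's convention. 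Touch instead with $\ell_{x,p}+\varepsilon|z-y|^2$, where $\binom nk(2\varepsilon)^k<\lambda$.
\end{enumerate}
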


The PDE input connecting \Cref{thm:stratification} to \Cref{thm:main} is the following local criterion.

\begin{theorem}[Small supporting contact sets are regular]\label{thm:contact-regularity}
Let $u$ be a convex viscosity solution of \eqref{eq:main-equation}.  If there exist $x_0\in\Omega$ and $p_0\in\partial u(x_0)$ such that
\begin{equation}\label{eq:small-contact-intro}
d_u(x_0,p_0)\le n-k,
\end{equation}
then $u$ is smooth in a neighborhood of $x_0$.  Consequently,
\begin{equation}\label{eq:sing-in-Cq}
\Sing(u)\subset\cC_{n-k+1}(u).
\end{equation}
\end{theorem}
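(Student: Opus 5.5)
The plan has two parts: first localize the contact set so that a small section of $u$ appears at $x_0$, then feed that section to a known interior regularity result for $\sigma_k$; the inclusion \eqref{eq:sing-in-Cq} is then a contrapositive.

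\emph{Localizing the contact set.} Subtract the supporting plane and set $w:=u-\ell_{x_0,p_0}\ge0$. Then $w(x_0)=0$, $w$ solves \eqref{eq:main-equation}, and its zero set near $x_0$ is the convex set $K:=K_{x_0,p_0}(r_0)$. For $r_0$ small, $\dim\aff K=d:=d_u(x_0,p_0)\le n-k$. Let $L=\aff K$, a $d$-plane through $x_0$.

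\emph{Step 1: a strict tilt.} Let $V:=L^\perp$, so $\dim V=n-d\ge k$. I claim that for a suitable slope $\xi$ the tilted function $w-\xi\cdot(y-x_0)$ has a compact section $S_h$ with $x_0$ in its interior. The basic tool is a Chou--Wang type barrier comparison. Suppose $w$ vanished on a set whose affine dimension is at least $n-k+1$ near a point. Comparing with a quadratic that is flat in those directions and has only $k-1$ positive eigenvalues forces $\sigma_k=0$, which contradicts $\sigma_k\ge1$. So flat sets have dimension at most $n-k$, and our $d$ sits exactly in the range where no such obstruction occurs.

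Along $L$ we use the convexity of $K$. Since $x_0\in K$ and $K$ has full dimension in $L$, a small slope $\xi\in L$ (chosen after moving $x_0$ to a relative interior point if necessary; otherwise use the face structure) makes the tilted zero set bounded inside $L$. In the transverse directions, strict positivity of $w$ on $\partial B_{r_0}\cap\{\dist(\cdot,L)\ge\delta\}$ together with convexity gives growth away from $L$.

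\emph{Step 2: regularity in the section.} Inside the compact section the solution is strictly convex, in the sense that no supporting plane touches along a line segment reaching $\partial S_h$. Here I use the barrier argument from Step 1: a segment in the contact set would extend to the boundary, and together with $K$ it would produce a contact set of dimension $d+1$. Iterating this builds a contact set of dimension at least $n-k+1$, where the barrier contradiction applies. With strict convexity in hand, Pogorelov-type interior estimates for $\sigma_k$ give $C^{1,1}$ bounds (Chou--Wang, with the Pogorelov estimate requiring dimension $\ge k$ transversally). Evans--Krylov and Schauder then give smoothness near $x_0$.

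\emph{Main obstacle.} The hardest step is Step 2. The difficulty is that $d\ge1$ is allowed, so $u$ need not be strictly convex at $x_0$. I would try Chou--Wang's weak interior estimate, which only needs $u$ to equal an affine function on the boundary of a bounded convex domain, applied to the section of the tilted function, and I would rely on the $(n-k)$-dimensional bound to keep the sections compact.

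\emph{Inclusion.} For \eqref{eq:sing-in-Cq}: if $x\notin\cC_{n-k+1}(u)$, then by definition some $p\in\partial u(x)$ has $d_u(x,p)\le n-k$. The first part then shows $x\in\Reg(u)$.
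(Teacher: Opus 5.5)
Your contrapositive derivation of \eqref{eq:sing-in-Cq} is fine, but the regularity part has a genuine gap. Both steps try to recover strict convexity, in the form of a compactly contained section near $x_0$, and that is unavailable in exactly the situation the theorem addresses.

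First, the ``basic tool'' in Step~1 is false for every $k\ge3$: flat sets of dimension $n-k+1$ do occur. Let $w$ be a convex solution of $\det D^2w=1$ in $\R^k$ that is affine on a segment, for instance Mooney's examples recalled in \Cref{sec:sharpness}. Then $u(x,z)=w(x)$ solves $\sigma_k(D^2u)=1$ by \Cref{prop:cylindrical-extension}. At points $(x,z)$ with $x$ in the relative interior of the segment, every supporting contact set has dimension at least $1+(n-k)$. This is why the paper proves only $\cH^{n-1}_{\mathrm{loc}}(\cC_{n-k+1}(u))=0$ and $\cC_n(u)=\varnothing$, not emptiness of $\cC_{n-k+1}(u)$. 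Your comparison yields no contradiction. The quadratic you describe need not touch $u$ from above, since in Pogorelov-type examples the transverse growth is like $|x'|^{2-2/k}$. A lower test with $k-1$ positive eigenvalues only gives $\sigma_k(D^2\phi)\le1$, which is consistent. So the iteration in Step~2, which derives strict convexity by manufacturing a contact set of dimension $n-k+1$, collapses.

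Second, the compact-section claim of Step~1 fails even in trivial cases. The function $u(x,z)=\tfrac12|x|^2$ on $\R^k\times\R^{n-k}$ solves $\sigma_k(D^2u)=1$ with $d_u(0,0)=n-k$. At the points $(0,\pm z)$ the function $u-\xi\cdot y$ takes the values $\mp\xi\cdot(0,z)$, one of which is $\le0$. Hence every set $\{u-\xi\cdot y<h\}$ with $h>0$ meets every sphere about $0$. No argument that needs $u$ to agree with an affine function on the boundary of a compactly contained domain can handle $d_u(x_0,p_0)\ge1$.

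The missing idea is that the Chou--Wang Pogorelov estimate (\Cref{thm:Chou-Wang}) does not need an affine cutoff. It suffices to have $w>u$ in a domain, $w=u$ on its boundary, and $L_uw\ge0$, which holds automatically when $D^2w\in\Gamma_k$.

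Normalize so that $u(0)=0\le u$ and the contact set in $B_{2r}$ lies in a subspace $Z$, enlarged to dimension $n-k$. The paper takes the saddle $w=\delta(A|y|^2-|z|^2+\alpha r^2)$ with $y\in Z^\perp$ and $z\in Z$.
\begin{itemize}
\item The saddle is $k$-admissible for large $A$ because its positive block has dimension $k$. This, not strict convexity, is where $d_u(x_0,p_0)\le n-k$ enters.
\item Its negative directions along $Z$ push $w$ below $u$ on the part of $\partial B_r$ near $Z$, and $u>0$ on the rest. Hence $w<u$ on $\partial B_r$ while $w(0)>u(0)$.
\end{itemize}
Since $u$ is only a viscosity solution, the estimate is applied to the smooth Dirichlet approximants $u_j$ of \Cref{lem:smooth-approximation}. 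These converge uniformly and have uniform gradient bounds, though they are $k$-admissible rather than convex. The domain is the component of $\{w-t_j>u_j\}$ containing $0$. The resulting uniform Hessian bound near $0$, compactness in $\Gamma_k$, and Evans--Krylov plus Schauder estimates then give smoothness.
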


Taking $m=q$ in \Cref{thm:stratification} and using
\[
2n-q-k=n-1
\]
proves \Cref{thm:main}.

\subsection{Main mechanism}

After subtracting a supporting affine function, suppose that $u(0)=0$ and $u\ge0$.  The proof has three main ingredients.

\begin{enumerate}[label=\textup{(\roman*)},leftmargin=2.2em]
\item If the local contact set $\{u=0\}$ is contained in a subspace of dimension at most $n-k$, one can build a $k$-admissible saddle quadratic with $k$ strongly positive directions and $n-k$ negative directions.  Smooth Dirichlet approximation, the interior gradient estimate, and the Chou--Wang Pogorelov estimate then yield a local Hessian bound and smoothness.

\item Let
\[
K_h:=\{u\le h\}\cap\ol B_R
\]
and let
\[
a_1(h)\ge\cdots\ge a_n(h)>0
\]
be the semiaxes of a John ellipsoid of $K_h$.  An ellipsoidal paraboloid touches $u$ from above at an interior point.  The viscosity inequality gives
\begin{equation}\label{eq:outline-axis-product}
a_q(h)\cdots a_n(h)
\le C(n,k)\lambda^{-1/2}h^{k/2}.
\end{equation}
Thus positive $k$-Hessian density controls precisely the smallest $k$ John axes.

\item If a supporting contact set contains an $m$-dimensional simplex, then $a_m(h)$ is bounded below independently of $h$.  For the strictly convex perturbation
\[
v(x):=u(x)+\tfrac12|x|^2,
\]
the corresponding sections satisfy
\begin{equation}\label{eq:outline-section-decay}
|S^v_{h,p+x}(x)|\le C_{x,p}h^{(m+k)/2}.
\end{equation}
Mooney's convex section-covering theorem converts \eqref{eq:outline-section-decay} into \eqref{eq:strata-bound}.
\end{enumerate}

The proof of \Cref{thm:main} is therefore entirely finite-scale and convex-geometric after the support-dependent regularity criterion has been established.  It does not use the sectional mean-value formula stated below.  The qualitative loss occurs when the limiting condition ``the supporting contact set has affine dimension at least $m$'' is converted into a finite-scale simplex: its inradius may depend on the point and on the supporting slope.  Once a relative contact ball of radius $\rho$ is prescribed, however, the section estimate is quantitative; see \eqref{eq:section-decay-quantitative} and \Cref{prop:section-decay}.  The absence of a uniform lower bound for $\rho$, together with possible rotation and redistribution among the smallest John axes, is the obstruction to a scale-by-scale packing estimate; see \Cref{sec:quantitative-limitations}.

\subsection{A companion sectional viewpoint}

The search for a nonlinear mean-value principle is not used as a lemma in the proof above, but it identifies the same threshold
\[
q=n-k+1
\]
from the homogeneous equation itself.  This provides a conceptual explanation for why $q$ is the critical number of supporting flat directions in \Cref{thm:contact-regularity,thm:stratification}.

For $E\in\Gr(q,n)$, let $B_r^E(x)$ be the $q$-dimensional ball of radius $r$ in $x+E$, and define
\begin{equation}\label{eq:mean-operator}
\cM_{q,r}u(x):=
\inf_{E\in\Gr(q,n)}
\fintavg_{B_r^E(x)}u\,d\cH^q.
\end{equation}
If the eigenvalues of $A\in\Sym(n)$ are ordered increasingly, set
\[
\cP_q^-(A):=\lambda_1(A)+\cdots+\lambda_q(A).
\]
For $K\Subset\Omega$, let $\omega_{K,\phi}(r)$ denote the operator-norm modulus of continuity of $D^2\phi$ between points of distance at most $r$ in a fixed compact neighborhood of $K$.  In \Cref{sec:sectional-formulas} we prove the quantitative expansion
\begin{equation}\label{eq:mean-expansion-intro}
\left|\cM_{q,r}\phi(x)-\phi(x)
-
\frac{r^2}{2(q+2)}\cP_q^-(D^2\phi(x))\right|
\le
\frac{q}{2(q+2)}\omega_{K,\phi}(r)r^2
\end{equation}
uniformly on compact subsets for $\phi\in C^2$.  The infimum is $1$-Lipschitz with respect to the uniform norm.  Thus minimizing planes may jump at eigenvalue crossings, but the optimized value and the remainder remain stable.  On the convex branch,
\begin{equation}\label{eq:homogeneous-equivalence-intro}
\sigma_k(D^2\phi)=0
\quad\Longleftrightarrow\quad
\cP_q^-(D^2\phi)=0,
\end{equation}
so the zero-level equation has an asymptotic infimum mean-value property on $q$-dimensional affine sections.

Writing $\gamma_{k,n}$ for normalized Haar probability measure on $\Gr(k,n)$, we also record the complementary identity
\begin{equation}\label{eq:grassmann-intro}
\sigma_k(A)=\binom nk
\int_{\Gr(k,n)}\det(A|_V)\,d\gamma_{k,n}(V),
\end{equation}
which realizes the positive $k$-Hessian density as the Grassmannian average of the Monge--Amp\`ere densities of its $k$-dimensional restrictions.  Neither \eqref{eq:mean-expansion-intro} nor \eqref{eq:grassmann-intro} enters the proof of \Cref{thm:main}.  They are placed after the proof, in \Cref{sec:sectional-formulas}, together with an explanation of how the Newton-tensor/mean-value heuristic leads to the supporting-contact and John-ellipsoid mechanism used here.

\subsection{Organization}

In \Cref{sec:preliminaries} we fix the viscosity convention and state the external estimates used in the proof.  The proof of the main theorem then proceeds without interruption: small supporting contact sets imply regularity in \Cref{sec:contact-regularity}; the John-axis estimate is established in \Cref{sec:john}; section-volume decay and Hausdorff stratification are proved in \Cref{sec:section-decay,sec:main-proof}; and \Cref{sec:consequences} contains the $W^{2,1}$ consequence.  The logically independent sectional formulas are deliberately postponed to \Cref{sec:sectional-formulas}, where we also explain how the mean-value heuristic suggests the finite-scale contact-set proof.  Sharpness is discussed in \Cref{sec:sharpness}, and \Cref{sec:further} records limitations and further directions.  For completeness, \Cref{app:higher-codimension-covering} proves the higher-codimension section-covering criterion recorded in Mooney's remark, while \Cref{app:localized-chou-wang} verifies the localized Chou--Wang estimate used in the regularity argument.

\subsection*{Acknowledgments}
The author thanks Terry Tao for pointing out Reilly's Hessian identity \cite{Reilly1977} in response to an earlier MathOverflow question \cite{TaoMathOverflow2017}.  That observation was the starting point for the Newton-tensor and mean-value viewpoint developed in \Cref{sec:sectional-formulas}.  The author is grateful to Xushan Tu for carefully checking Sections~3--6 and for suggestions that improved the presentation and writing, and to Xi-Nan Ma for reading an early draft and offering helpful comments.

\subsection*{Statement on the use of artificial intelligence}
During the preparation of this manuscript, the author used large language models as auxiliary technical tools for literature discovery, brainstorming conceptual frameworks, and improving the presentation of mathematical arguments and English prose.  All mathematical claims and proofs were rigorously verified by the author.  The author directed the conceptual development and final presentation and assumes full responsibility for all results and for the contents of the manuscript.

\section{Preliminaries and external estimates}\label{sec:preliminaries}

\subsection{Algebraic facts}

The function
\[
F(A):=\sigma_k(A)^{1/k}
\]
is concave, positively one-homogeneous, and elliptic on $\Gamma_k$; see, for example, \cite{Wang2009}.  If $A\in\Gamma_k$, then the matrix
\[
\sigma_k^{ij}(A):=\frac{\partial\sigma_k}{\partial a_{ij}}(A)
\]
is positive definite.  Euler's identity gives
\begin{equation}\label{eq:euler}
DF(A):A=F(A),
\qquad
\sigma_k^{ij}(A)a_{ij}=k\sigma_k(A).
\end{equation}
Concavity and one-homogeneity also imply that, for $A,B\in\Gamma_k$,
\begin{equation}\label{eq:garding-linearized}
DF(A):B\ge F(B).
\end{equation}
Indeed,
\[
F(B)\le F(A)+DF(A):(B-A)=DF(A):B.
\]

We shall use a simple family of admissible saddle quadratics.

\begin{lemma}[Admissible saddle quadratic]\label{lem:saddle}
Let $\R^n=Y\oplus Z$ be an orthogonal decomposition with
\[
\dim Y=k,
\qquad
\dim Z=n-k.
\]
There exists $A_0=A_0(n,k)$ such that for every $A\ge A_0$,
\begin{equation}\label{eq:saddle-form}
Q(y,z):=A|y|^2-|z|^2
\end{equation}
satisfies $D^2Q\in\Gamma_k$.
\end{lemma}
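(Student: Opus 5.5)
The plan is to compute $\sigma_j(D^2Q)$ for every $1\le j\le k$ directly from the eigenvalues and show that each is positive once $A$ is large. The Hessian $D^2Q$ is diagonal in an orthonormal basis adapted to $Y\oplus Z$. Its eigenvalues are $2A$ with multiplicity $k$ and $-2$ with multiplicity $n-k$. The elementary symmetric polynomials of such a two-valued spectrum factor through the generating function
\[
\prod_{i}(1+t\lambda_i)=(1+2At)^k(1-2t)^{n-k}.
\]
Extracting the coefficient of $t^j$ gives
\[
\sigma_j(D^2Q)=\sum_{i=0}^{j}\binom{k}{j-i}\binom{n-k}{i}(2A)^{j-i}(-2)^{i},
\]
with the convention that binomial coefficients vanish outside their natural range.

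Next, for each $1\le j\le k$, I isolate the $i=0$ term. It equals $\binom{k}{j}(2A)^j$, which is strictly positive because $j\le k$. Every other term has degree at most $j-1$ in $A$. For $A\ge1$ their absolute values sum to at most
\[
(2A)^{j-1}\sum_{i\ge1}\binom{k}{j-i}\binom{n-k}{i}2^i\le C(n,k)A^{j-1}.
\]
Hence $\sigma_j(D^2Q)\ge \binom kj 2^jA^j-C(n,k)A^{j-1}>0$ as soon as $A>C(n,k)/\bigl(\binom kj2^j\bigr)$.

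Taking $A_0(n,k)$ to be the maximum of $1$ and these finitely many thresholds over $j=1,\dots,k$ gives $D^2Q\in\Gamma_k$ for all $A\ge A_0$. The constant depends only on $(n,k)$ and not on the particular decomposition, since $\sigma_j$ is rotation invariant.

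There is no serious obstacle; the only care needed is the bookkeeping of the binomial expansion. Alternatively, one could note that $\Gamma_k$ is an open cone containing the matrix $\mathrm{diag}(I_k,0)$, since $\sigma_j$ of it equals $\binom kj>0$. Then $\mathrm{diag}(I_k,-A^{-1}I_{n-k})$ lies in $\Gamma_k$ for $A$ large, and multiplying by $2A$ preserves the cone. I would present this cone-openness argument as the primary proof because it is shorter, and keep the explicit expansion only if a quantitative $A_0$ is wanted.
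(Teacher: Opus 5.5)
Your proof is correct and is essentially the paper's: the paper also expands $2^{-j}\sigma_j(D^2Q)=\sum_{\ell=0}^{j}(-1)^\ell\binom{n-k}{\ell}\binom{k}{j-\ell}A^{j-\ell}$, notes that the leading coefficient is $\binom kj>0$, and takes $A$ large for the finitely many $j\le k$. Your cone-openness variant is also valid and rests on the same observation. It uses the fact that $\operatorname{diag}(I_k,0)$ is the limit of $(2A)^{-1}D^2Q$ as $A\to\infty$, together with the openness of $\Gamma_k$ and its invariance under positive scaling.
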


\begin{proof}
For $1\le j\le k$,
\begin{align*}
2^{-j}\sigma_j(D^2Q)
&=
\sum_{\ell=0}^{j}
(-1)^\ell
\binom{n-k}{\ell}
\binom{k}{j-\ell}A^{j-\ell}.
\end{align*}
This is a polynomial in $A$ whose leading coefficient is $\binom{k}{j}>0$.  Since there are only finitely many $j\le k$, all these quantities are positive for $A$ sufficiently large.
\end{proof}

\begin{lemma}[Compactness inside the G\aa rding cone]\label{lem:compact-gamma}
For every $C_0<\infty$, the set
\[
\{A\in\Gamma_k:\sigma_k(A)=1,\ |A|\le C_0\}
\]
is contained in a compact subset of $\Gamma_k$.  Consequently, $F=\sigma_k^{1/k}$ is uniformly elliptic on this set, with constants depending only on $n,k,C_0$.
\end{lemma}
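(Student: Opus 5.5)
The plan is to show that the set is closed, bounded, and contained in the open cone $\Gamma_k$. Boundedness is immediate from $|A|\le C_0$. The substance is that every limit point stays strictly inside $\Gamma_k$. Once that holds, uniform ellipticity follows from the positive definiteness of $\sigma_k^{ij}$ on $\Gamma_k$ and continuity on a compact set.

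\textbf{Step 1 (closure lies in $\overline{\Gamma_k}$).} Let $A_m\in\Gamma_k$ with $\sigma_k(A_m)=1$ and $|A_m|\le C_0$. By Bolzano--Weierstrass we may pass to a subsequence with $A_m\to A$. Continuity of the $\sigma_j$ gives $\sigma_j(A)\ge0$ for $1\le j\le k$ and $\sigma_k(A)=1$.

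\textbf{Step 2 (the limit is in the open cone).} This is the main point: we must exclude $\sigma_j(A)=0$ for some $j<k$. I would use the standard fact that $\overline{\Gamma_k}$ is the closure of the connected component of $\{\sigma_k>0\}$ containing the identity. Equivalently, $\Gamma_k$ is exactly the connected component of $\{\sigma_k>0\}$ containing $I$ (G\aa rding's theory of hyperbolic polynomials; see \cite{Wang2009}).

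The limit $A$ lies in $\overline{\Gamma_k}$ and satisfies $\sigma_k(A)=1>0$. A small ball around $A$ lies in $\{\sigma_k>0\}$, so it lies inside a single component of that set. This ball meets $\Gamma_k$, since it contains $A_m$ for $m$ large, so it lies in the component $\Gamma_k$. Hence $A\in\Gamma_k$.

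A more elementary alternative is to argue by eigenvalues, using the Newton--Maclaurin inequality
\[
\Bigl(\sigma_k/\tbinom nk\Bigr)^{1/k}\le\Bigl(\sigma_j/\tbinom nj\Bigr)^{1/j},
\]
valid on $\overline{\Gamma_k}$. It yields the quantitative bound $\sigma_j(A)\ge c(n,k,j)>0$ on the set, which passes directly to limits.

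Either way, the set is a closed and bounded subset of $\Gamma_k$, so it is compact and contained in $\Gamma_k$.

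\textbf{Step 3 (uniform ellipticity).} On a compact $\mathcal K\subset\Gamma_k$, the map $A\mapsto DF(A)=\frac1k\sigma_k^{1-k}\,\sigma_k^{ij}(A)$ is continuous and positive definite. Its smallest and largest eigenvalues are therefore bounded between positive constants $\lambda(n,k,C_0)$ and $\Lambda(n,k,C_0)$.

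To get ellipticity on a neighborhood rather than only at the points of $\mathcal K$, I would enlarge $\mathcal K$ slightly to a compact $\delta$-neighborhood that still lies inside the open set $\Gamma_k$, and apply the same bound there.

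The only delicate ingredient is Step 2. I would cite Newton--Maclaurin, or G\aa rding's characterization of $\Gamma_k$ as a connected component, rather than reprove it.
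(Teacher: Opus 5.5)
Your proposal is correct and follows the paper's argument. The paper's proof is exactly your ``more elementary alternative'': Maclaurin's inequalities give $\sigma_j(A)\ge\binom nj\binom nk^{-j/k}>0$ for $1\le j<k$ on the set, which together with $|A|\le C_0$ keeps limit points off $\partial\Gamma_k$, and uniform ellipticity then comes from continuity and positive definiteness of $DF$ on a compact subset of $\Gamma_k$. Your G\aa rding connected-component route (equivalently, $\partial\Gamma_k\subset\{\sigma_k=0\}$) is an equally valid qualitative substitute.

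One harmless slip: $DF(A)=\frac1k\sigma_k^{1/k-1}\sigma_k^{ij}(A)$, not $\frac1k\sigma_k^{1-k}\sigma_k^{ij}(A)$. This does not affect the argument, since $\sigma_k$ is bounded between positive constants on your compact $\delta$-neighborhood.
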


\begin{proof}
Maclaurin's inequalities give positive lower bounds for $\sigma_j(A)$, $1\le j<k$, in terms of $n,k$ and $\sigma_k(A)=1$.  Together with the norm bound, this prevents convergence to the boundary of $\Gamma_k$.  The uniform ellipticity follows from continuity and positivity of $DF$ on compact subsets of $\Gamma_k$.
\end{proof}

\subsection{Viscosity convention}

A $C^2$ function is called $k$-admissible if its Hessian belongs to $\Gamma_k$.  A continuous function $u$ is a viscosity subsolution of
\[
\sigma_k(D^2u)\ge f
\]
if every $k$-admissible $C^2$ function $\phi$ touching $u$ from above at $x$ satisfies
\[
\sigma_k(D^2\phi(x))\ge f(x).
\]
A viscosity supersolution is defined using $k$-admissible tests touching from below and the reversed inequality.  A viscosity solution is both a subsolution and a supersolution.  This is the convention used in \cite{MooneyRemarks2025}; the subsolution definition is unchanged if arbitrary $C^2$ upper tests are allowed \cite[Remark~2.2]{MooneyRemarks2025}.

We use the standard comparison and local-uniform stability properties of this admissible viscosity theory.  In particular, solutions on a ball can be approximated locally uniformly by smooth admissible solutions obtained from smooth approximations of the boundary values; see \cite[Theorem~2.1 and Remark~2.3]{MooneyRemarks2025} and the original Dirichlet theory \cite{CNS1985}.

\subsection{Classical solvability, gradient bounds, and the Chou--Wang estimate}

We record the precise inputs needed for the regularity criterion.

\begin{theorem}[Caffarelli--Nirenberg--Spruck]\label{thm:CNS}
Let $B_R\subset\R^n$, let $g\in C^\infty(\partial B_R)$, and let $1\le k\le n$.  There exists a unique solution
\[
v\in C^\infty(\ol B_R),
\qquad
D^2v\in\Gamma_k,
\]
of
\[
\sigma_k(D^2v)=1\quad\text{in }B_R,
\qquad
v=g\quad\text{on }\partial B_R.
\]
\end{theorem}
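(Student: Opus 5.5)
This is the classical Dirichlet theorem of Caffarelli, Nirenberg and Spruck, and the paper uses it as an external input. Still, I would prove it by the method of continuity, reducing existence to a priori $C^{2,\alpha}(\ol B_R)$ bounds for admissible solutions. Uniqueness comes first and is immediate. If $v_1,v_2$ are two admissible solutions, then their difference satisfies the linear elliptic equation $a^{ij}\partial_{ij}(v_1-v_2)=0$ with
\[
a^{ij}=\int_0^1\sigma_k^{ij}\bigl(tD^2v_1+(1-t)D^2v_2\bigr)\,dt .
\]
Here $\Gamma_k$ is a convex cone, so the segment stays in $\Gamma_k$, and $\sigma_k^{ij}$ is positive definite there. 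Hence $v_1=v_2$ by the maximum principle.

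\emph{Deformation.} Extend $g$ harmonically (or by any smooth extension $G$ to $\ol B_R$), and set $\underline v:=G+M(|x|^2-R^2)$ with $M$ large. Then $\underline v$ is strictly admissible with $\sigma_k(D^2\underline v)\ge 1$ and $\underline v=g$ on $\partial B_R$. Consider the family
\[
\sigma_k(D^2v_t)=(1-t)\sigma_k(D^2\underline v)+t \quad\text{in } B_R,\qquad v_t=g \text{ on }\partial B_R,\qquad t\in[0,1].
\]
At $t=0$ the solution is $v_0=\underline v$. The linearized operator $\sigma_k^{ij}(D^2v)\partial_{ij}$ is uniformly elliptic on admissible $C^{2,\alpha}$ functions, so the set of solvable $t$ is open by the implicit function theorem. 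Closedness follows from a uniform $C^{2,\alpha}$ bound, obtained by the following chain of estimates.

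\emph{$C^0$ and $C^1$ bounds.}
\begin{enumerate}[label=\textup{(\roman*)},leftmargin=2.2em]
\item Admissible functions are subharmonic. Comparison with $\underline v$ from below and with the harmonic extension of $g$ from above gives the $C^0$ bound, together with the boundary gradient bound.
\item Differentiating the equation shows that $|Dv|^2$ is controlled by the maximum principle, which gives the interior gradient bound.
\end{enumerate}

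\emph{$C^2$ bounds.}
\begin{enumerate}[label=\textup{(\roman*)},leftmargin=2.2em,resume]
\item Pure second derivatives are subsolutions of the linearized equation, by concavity of $\sigma_k^{1/k}$. Hence $\sup_{B_R}|D^2v|\le C(1+\sup_{\partial B_R}|D^2v|)$, using $\Delta v>0$ to control negative eigenvalues.
\item On $\partial B_R$, the tangential second derivatives are determined by $g$ and $v_\nu$.
\item The mixed tangential-normal derivatives are bounded by a barrier argument. Apply the linearized operator to the rotational derivatives $x_i\partial_j-x_j\partial_i$, which commute with the equation on a ball. The barrier is built from $\underline v - v$ and a quadratic in the distance to the boundary.
\end{enumerate}

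\emph{The main obstacle: the double normal derivative $v_{\nu\nu}$.} An upper bound for $v_{\nu\nu}$ follows once one shows the following lower bound: the $(k-1)$-th elementary function of the tangential block, $\sigma_{k-1}\bigl(v_\nu\,\kappa + D^2_{T}g\bigr)$ with $\kappa$ the boundary curvature, is at least $c_0>0$ on $\partial B_R$. Given that bound, expanding $\sigma_k$ in the normal entry gives $v_{\nu\nu}\le C$. I would prove the bound by the CNS argument:
\begin{enumerate}[label=\textup{(\alph*)},leftmargin=2.2em]
\item take the boundary point where this tangential quantity is minimal;
\item use uniform convexity of the sphere, which makes it $(k-1)$-convex;
\item use the strict subsolution $\underline v$ to build a barrier showing that the minimum is uniformly positive.
\end{enumerate}
This is the delicate step, and it is where the geometry of the ball enters.

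\emph{Conclusion.} Once $|D^2v|\le C$, \Cref{lem:compact-gamma} (adapted to right-hand sides in a compact range) makes the equation uniformly elliptic and concave. Evans--Krylov, together with Krylov's boundary estimate, then gives the $C^{2,\alpha}(\ol B_R)$ bound. Schauder bootstrapping gives $C^\infty(\ol B_R)$, which closes the continuity argument.
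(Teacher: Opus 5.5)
Your outline is correct: it is the Caffarelli--Nirenberg--Spruck continuity-method proof, with the standard treatment of the double normal derivative through the minimum of $\sigma_{k-1}$ of the tangential block. The paper does not reprove the theorem; it quotes it as the ball case of \cite{CNS1985} (cf.\ \cite[Theorem~2.1]{MooneyRemarks2025}), so your argument unpacks exactly the route the citation points to. Two cosmetic points, which do not affect the argument, should be tightened. First, the deformed right-hand side $f_t$ is not constant, so $|Dv|^2$ is not itself a subsolution; run the gradient bound instead on $\pm\partial_e v+A|x|^2$, using $\sum_i\sigma_k^{ii}=(n-k+1)\sigma_{k-1}\ge c$. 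Second, $\underline v$ is not a strict subsolution at $t=0$, so the barriers should use, for example, $G+2M(|x|^2-R^2)$, which still lies below every $v_t$ by comparison.
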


This is the ball case of \cite{CNS1985}; the same formulation is stated in \cite[Theorem~2.1]{MooneyRemarks2025}.

\begin{theorem}[Trudinger's interior gradient estimate]\label{thm:gradient}
If $v$ is a smooth $k$-admissible solution of $\sigma_k(D^2v)=1$ in $B_R$, then
\begin{equation}\label{eq:gradient-estimate}
\|Dv\|_{L^\infty(B_{R/2})}
\le C(n,k)R^{-1}\|v\|_{L^\infty(B_R)}.
\end{equation}
\end{theorem}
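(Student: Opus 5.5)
This estimate is Trudinger's, cited from the literature, so the plan below only sketches the standard argument. The approach is a rescaling followed by a Bernstein-type maximum principle. First normalize. Replace $v$ by $v_R(x):=R^{-2}v(Rx)$ on $B_1$; then $\sigma_k(D^2v_R)=1$ still holds and $Dv_R(x)=R^{-1}Dv(Rx)$. This rescaling is not exactly scale-invariant for the stated bound, since the right-hand side $1$ becomes $R^{2k}\cdot R^{-2k}=1$ while $\|v_R\|_\infty=R^{-2}\|v\|_\infty$. So I would instead prove the estimate in the form
\[
|Dv(0)|\le C\bigl(R^{-1}\operatorname{osc}_{B_R}v+R\bigr)
\]
and absorb the term $R$. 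The absorption works because a $k$-admissible solution with right-hand side $1$ is subharmonic with $\Delta v\ge c(n,k)>0$ by Maclaurin, which forces $\operatorname{osc}_{B_R}v\ge cR^2$. Translation invariance then gives the bound on all of $B_{R/2}$.

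The core step is a Bernstein-type maximum principle. Let $M:=\|v\|_{L^\infty(B_R)}$ and $L:=\sigma_k^{ij}(D^2v)\partial_{ij}$, which is elliptic since $D^2v\in\Gamma_k$. Consider the auxiliary function
\[
G:=\eta(x)\,\varphi(v)\,|Dv|^2,\qquad \eta=(1-|x|^2/R^2)^2,\qquad \varphi(t)=(3M-t)^{-1/2}.
\]
At an interior maximum $x_1$ of $G$, rotate coordinates so that $Dv(x_1)=|Dv|e_1$. Differentiating the equation gives $L(\partial_\ell v)=0$, so
\[
L|Dv|^2=2\sigma_k^{ij}v_{\ell i}v_{\ell j}\ge0.
\]
Combining $DG=0$ and $LG\le0$, one uses the $\varphi''$ term, namely $\varphi''\sigma_k^{11}v_1^2|Dv|^2$, to dominate the cross terms and the cutoff terms $L\eta$ and $D\eta$. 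The cross terms are handled by Cauchy--Schwarz through the $\varphi'$ contribution. Also $Lv=k\sigma_k=k$ by \eqref{eq:euler}, and $\varphi'>0$, so the $\varphi'Lv$ term has the favorable sign. This leads to
\[
\eta|Dv|^2\sigma_k^{11}\lesssim M^2R^{-2}\sum_i\sigma_k^{ii}.
\]

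The main obstacle is exactly this last inequality. Because $\sigma_k$ is not uniformly elliptic, one needs $\sigma_k^{11}\ge c(n,k)\sum_i\sigma_k^{ii}$ in the gradient direction, and this fails in general. The standard remedy is to split into cases according to whether $v_{11}$ is large negative or not. If $v_{11}$ is not large negative, the first-order condition $DG=0$ pins $v_{11}$ in terms of $|Dv|^2$. In that case the Lin--Trudinger type inequality $\sigma_k^{11}\ge c\,\sigma_{k-1}$, valid when $\lambda_1$ is not the dominant eigenvalue in $\Gamma_k$, gives the required comparability. In the complementary case, one uses instead the term $\sigma_k^{ij}v_{1i}v_{1j}\ge\sigma_k^{11}v_{11}^2$ coming from $L|Dv|^2$. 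Alternatively, I would sidestep the case split by invoking Trudinger's original integral or Harnack approach, and Wang's survey \cite{Wang2009} records the result. Either route yields $\eta|Dv|^2\le C M^2R^{-2}$ at $x_1$. Since $\eta\ge c$ on $B_{R/2}$, this gives \eqref{eq:gradient-estimate}.
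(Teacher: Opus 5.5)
The paper gives no proof of this statement. It quotes the estimate from Trudinger and from Mooney's note, so your closing fallback (cite Trudinger/Wang) is exactly the paper's treatment.

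Your self-contained Bernstein sketch, however, fails at the step you yourself flag as the main obstacle. The computation with $G=\eta\varphi(v)|Dv|^2$ is correct up to
\[
\eta|Dv|^2\sigma_k^{11}\le CM^2R^{-2}\textstyle\sum_i\sigma_k^{ii}.
\]
(At the maximum, the net coefficient of $\sigma_k^{11}v_1^2$ is $\varphi''/\varphi-2(\varphi'/\varphi)^2=\tfrac14(3M-v)^{-2}>0$.) Your remedy for $\sigma_k^{11}\gtrsim\sum_i\sigma_k^{ii}$ does not work, for three reasons:
\begin{enumerate}
\item In the gradient frame $D^2v$ is not diagonal. The Lin--Trudinger inequality concerns $\sigma_{k-1}(\lambda|i)$ for eigenvalue indices $i\ge k$, so it does not apply to $\sigma_k^{11}$.
\item The inequality $\sigma_k^{ij}v_{1i}v_{1j}\ge\sigma_k^{11}v_{11}^2$ is false in general for the non-diagonal positive matrix $(\sigma_k^{ij})$.
\item The term $2\sigma_k^{ij}v_{\ell i}v_{\ell j}/|Dv|^2$ from $L|Dv|^2$ is already entirely used up in (only partially) absorbing $-\sigma_k^{ij}(|Dv|^2)_i(|Dv|^2)_j/|Dv|^4$. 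So no spare $v_{11}^2$ term remains.
\end{enumerate}
The case split is also backwards. When $|Dv|$ is large, the first-order condition makes $v_{11}$ large negative, so the case ``$v_{11}$ not large negative'' is the trivial one.

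The missing idea is that negativity of $v_{11}$ is itself what gives the comparability. At the maximum point $x_1$, the condition $\partial_1(\log G)=0$ reads
\[
v_{11}=-\frac{v_1}{2}\Bigl(\frac{\eta_1}{\eta}+\frac{v_1}{2(3M-v)}\Bigr).
\]
Since $|\eta_1|\le4\eta^{1/2}/R$ and $3M-v\le4M$, either $\eta^{1/2}|Dv|(x_1)<32M/R$ (and you are done) or $v_{11}(x_1)\le0$.

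Now take $A\in\Gamma_k$ with $a_{11}\le0$. Rotate $x_2,\dots,x_n$ so that the lower block is $\operatorname{diag}(\mu_2,\dots,\mu_n)$. This block is a principal submatrix, so $\mu\in\Gamma_{k-1}$. Expanding along the first row gives
\[
\sigma_{k-1}(A)=a_{11}\sigma_{k-2}(\mu)+\sigma_{k-1}(\mu)-\sum_{i\ge2}a_{1i}^2\sigma_{k-3}(\mu|i)\le\sigma_{k-1}(\mu)=\sigma_k^{11}(A).
\]
Since $\sum_i\sigma_k^{ii}(A)=(n-k+1)\sigma_{k-1}(A)$, this yields $\sigma_k^{11}\ge\frac{1}{n-k+1}\sum_i\sigma_k^{ii}$. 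Hence $\eta|Dv|^2\le CM^2R^{-2}$ at $x_1$, and the estimate follows on $B_{R/2}$ with no Lin--Trudinger input. Run the argument on $B_{R'}$ with $R'\uparrow R$ so that the maximum of $G$ is attained.

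Finally, your scaling remark is mistaken. The bound is exactly invariant under $v\mapsto R^{-2}v(R\,\cdot)$, and the maximum-principle argument never produces a $+R$ term. So the detour through $\Delta v\ge c$, though correct, is unnecessary.
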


See \cite{Trudinger1997} and \cite[Theorem~2.4]{MooneyRemarks2025}.

\begin{theorem}[Localized Chou--Wang estimate]\label{thm:Chou-Wang}
Let $D\subset\R^n$ be bounded and open.  Suppose that
\[
v,w\in C^\infty(D)\cap C(\overline D),
\qquad D^2v\in\Gamma_k,
\qquad \sigma_k(D^2v)=1\quad\text{in }D.
\]
Let
\[
L_v\phi:=\sigma_k^{ij}(D^2v)\phi_{ij},
\]
and assume that
\[
w>v\quad\text{in }D,
\qquad w=v\quad\text{on }\partial D,
\qquad L_vw\ge0\quad\text{in }D.
\]
If $Dv,Dw\in L^\infty(D)$, then
\begin{equation}\label{eq:Chou-Wang-estimate}
\sup_D (w-v)^4|D^2v|
\le
C\bigl(n,k,\|Dv\|_{L^\infty(D)},\|Dw\|_{L^\infty(D)}\bigr).
\end{equation}
\end{theorem}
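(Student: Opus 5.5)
The plan is to run a Pogorelov-type maximum principle argument on $D$, in the form of Chou--Wang. Set $\eta:=w-v$, so that $\eta>0$ in $D$ and $\eta=0$ on $\partial D$. Since $L_vv=\sigma_k^{ij}v_{ij}=k\sigma_k(D^2v)=k$ by \eqref{eq:euler}, the hypothesis $L_vw\ge0$ gives $L_v\eta\ge -k$. This is the only property of $w$ beyond its gradient bound that enters, and it is why $w$ replaces the usual affine cutoff.

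Consider the auxiliary function
\[
\Phi:=4\log\eta+\log\lambda_{\max}(D^2v)+\tfrac{a}{2}|Dv|^2,
\]
with $a=a(n,k,\|Dv\|_\infty)$ to be chosen. It tends to $-\infty$ at $\partial D$, so it attains an interior maximum at some $x_0$ (we may assume $\lambda_{\max}>1$ there). Rotate coordinates so that $D^2v(x_0)$ is diagonal with $\lambda_1\ge\cdots\ge\lambda_n$, and replace $\lambda_{\max}$ by $v_{11}$, which is a smooth lower barrier touching at $x_0$. Write $F^{ii}=\sigma_k^{ii}(D^2v(x_0))$.

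At $x_0$ we have the first-order conditions
\[
\frac{4\eta_i}{\eta}+\frac{v_{11i}}{\lambda_1}+a\,v_i\lambda_i=0,
\]
and the second-order condition $0\ge F^{ii}\Phi_{ii}$. Differentiating the equation once gives $F^{ii}v_{ii j}=0$, so the gradient term contributes exactly $aF^{ii}\lambda_i^2$. Differentiating twice gives
\[
F^{ii}v_{11ii}=-\sigma_k^{pq,rs}v_{pq1}v_{rs1}.
\]
Collecting terms, the inequality to exploit is
\[
0\ge -\frac{4k}{\eta}-\frac{4F^{ii}\eta_i^2}{\eta^2}
+\frac{-\sigma_k^{pq,rs}v_{pq1}v_{rs1}}{\lambda_1}
-\frac{F^{ii}v_{11i}^2}{\lambda_1^2}+aF^{ii}\lambda_i^2 .
\]

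The main obstacle is the bad third-order term $-F^{ii}v_{11i}^2/\lambda_1^2$. I plan to handle it with the standard Chou--Wang (Guan--Ren--Wang type) split.
\begin{itemize}[leftmargin=2em]
\item For indices $i\ne1$, the off-diagonal part $2\sigma_k^{11,ii}v_{11i}^2$ of the concavity term, combined with the inequality $\sigma_k^{ii}-\sigma_k^{11}\ge(\lambda_1-\lambda_i)\sigma_k^{11,ii}$, absorbs $F^{ii}v_{11i}^2/\lambda_1^2$. This works when $\lambda_i\le\delta\lambda_1$.
\item For indices with $\lambda_i>\delta\lambda_1$, one has $F^{ii}\le C F^{11}$, and the first-order condition converts $v_{11i}/\lambda_1$ into $-4\eta_i/\eta-av_i\lambda_i$. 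This costs $C F^{11}\eta_i^2/\eta^2+Ca^2|Dv|^2F^{ii}\lambda_i^2$, and the second piece is absorbed into $aF^{ii}\lambda_i^2$ by choosing $a$ small relative to $\|Dv\|_\infty^{-2}$ (with $\delta$ small).
\item The index $i=1$ is handled the same way through the first-order condition.
\end{itemize}

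After this absorption the inequality reads
\[
\tfrac a2F^{11}\lambda_1^2\le \frac{C}{\eta}+\frac{C(\|Dv\|_\infty,\|Dw\|_\infty)\sum_iF^{ii}}{\eta^2}.
\]
To close it, use $F^{11}\lambda_1\ge c(n,k)\sigma_k=c$ and $\sum_iF^{ii}=(n-k+1)\sigma_{k-1}\le C\lambda_1^{\,k-2}$... That power is too crude, so instead use the Chou--Wang bound $\sum_iF^{ii}\le C\,F^{11}\lambda_1$ (valid on $\sigma_k=1$ when $\lambda_1$ is large), after multiplying through by $\eta^2$. This yields $\eta^2\lambda_1\le C$ at $x_0$, and hence, with the exponent $4$ providing the room needed in the cases above, $\eta^4\lambda_1\le C$ at $x_0$.

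Maximality of $\Phi$ then transfers the bound to all of $D$, since the $|Dv|^2$ factor is bounded above and below. Finally, $|D^2v|\le C\lambda_{\max}$ because $D^2v\in\Gamma_k$ bounds the negative eigenvalues by the positive ones. This gives \eqref{eq:Chou-Wang-estimate}.
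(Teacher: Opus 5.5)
Your derivation of $L_v\eta\ge -k$ from $L_vw\ge0$ and Euler's identity is exactly the paper's weakening step. The maximum-principle computation you build on it, however, does not close.

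\textbf{The closing inequality is false.} The final step rests on $\sum_iF^{ii}\le C\,F^{11}\lambda_1$ on $\{\sigma_k=1\}$. Already for $k=n$ one has $F^{11}\lambda_1=\sigma_n=1$ and $\sum_iF^{ii}=\sigma_{n-1}$, so the inequality would assert a universal bound on $\sigma_{n-1}(D^2v)$, which is essentially the conclusion. For $k=2$, take $\lambda=(L,\varepsilon,\dots,\varepsilon)$ with $(n-1)L\varepsilon+\binom{n-1}{2}\varepsilon^2=1$. Then $F^{11}\lambda_1\le1$ while $\sum_iF^{ii}=(n-1)\sigma_1\ge(n-1)L$. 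Your inequality $\tfrac a2F^{11}\lambda_1^2\le C\eta^{-1}+C\eta^{-2}\sum_iF^{ii}$ then reads roughly $aL\lesssim\eta^{-1}+L\eta^{-2}$ and bounds nothing.

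\textbf{The missing idea.} The terms $4F^{ii}\eta_i^2/\eta^2$ with $F^{ii}\gg F^{11}$ cannot be estimated crudely by $\|D\eta\|_\infty^2F^{ii}/\eta^2$. They must be converted back through the first-order condition into $\tfrac{1+\epsilon}{4}F^{ii}v_{11i}^2/\lambda_1^2$ plus absorbable gradient terms; a large power of $\eta$ is what makes this coefficient small. They are then absorbed, together with $F^{ii}v_{11i}^2/\lambda_1^2$, by the concavity term $2\sigma_k^{11,ii}v_{11i}^2/\lambda_1$ with a strict margin. Only indices with $F^{ii}\le CF^{11}$ may be handled crudely.

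\textbf{Your first bullet fails even without a margin.} Since $\sigma_k^{11,ii}=(F^{ii}-F^{11})/(\lambda_1-\lambda_i)$, the absorption requires $(\lambda_1+\lambda_i)F^{ii}\ge2\lambda_1F^{11}$. This fails for $k=2$, $n=6$, $\lambda\approx(L,L,L,L,0,-\tfrac32L)$ at $i=5$: there $2\sigma_2^{11,55}/\lambda_1=2/L$ but $F^{55}/\lambda_1^2\approx 5/(2L)$, even though $\lambda_5\le\delta\lambda_1$. Such an index has $F^{ii}$ comparable to $F^{11}$. So the case split should be organized around $F^{ii}/F^{11}$, not around comparing $\lambda_i$ with $\delta\lambda_1$. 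Getting this bookkeeping right is the substance of the Chou--Wang calculation. The paper does not redo it; it imports it and observes that the comparison function enters only through the single inequality $F^{ij}\rho_{ij}\ge-1$.

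\textbf{There is also a localization gap.} Since $v$ is only in $C^\infty(D)\cap C(\overline D)$, $\lambda_{\max}(D^2v)$ may blow up at $\partial D$. So the claim that $\Phi\to-\infty$ at $\partial D$, and hence that $\Phi$ has an interior maximum, is unjustified; boundedness of $\eta^4|D^2v|$ near $\partial D$ is precisely what is being proved. The paper supplies this step as follows.
\begin{itemize}
\item For $\varepsilon>0$, the set $\{\eta>\varepsilon\}$ has closure compact in $D$, because $\eta\in C(\overline D)$ vanishes on $\partial D$.
\item On this set, $\eta-\varepsilon$ has the same derivatives as $\eta$, still satisfies $L_v(\eta-\varepsilon)\ge-k$, and vanishes on the boundary, while $D^2v$ stays bounded up to that boundary.
\item The interior estimate therefore gives $\sup(\eta-\varepsilon)^4|D^2v|\le C$ with $C$ independent of $\varepsilon$, and letting $\varepsilon\downarrow0$ proves the theorem. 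Regular values are chosen only so that the components are smooth domains.
\end{itemize}
Repaired in both places, your plan would become a self-contained reproof of the Chou--Wang estimate, rather than the paper's reduction to the published computation.
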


Chou and Wang prove the estimate with a $k$-admissible comparison function
\cite[Theorem~4.1]{ChouWang2001}.  Mooney records the bounded-domain form above
and observes that the weaker condition $L_vw\ge0$ suffices
\cite[Theorem~2.5 and Remark~2.6]{MooneyRemarks2025}.  Since those localization
and weakening steps are stated there without proof, we verify them in
\Cref{app:localized-chou-wang}.  The comparison function used in the main
argument is itself $k$-admissible, so the proof also falls within the original
admissible-barrier formulation.

\subsection{Mooney's convex section-covering theorem}

For a convex function $v:\Omega\to\R$, a point $x\in\Omega$, and $p\in\partial v(x)$, define the section
\begin{equation}\label{eq:section-definition}
S^v_{h,p}(x):=
\{y\in\Omega:v(y)<v(x)+p\cdot(y-x)+h\}.
\end{equation}

\begin{lemma}[Local supporting slopes are global]\label{lem:local-global-subgradient}
Let $\Omega$ be convex, let $v:\Omega\to\R$ be convex, and let $x\in\Omega$.  If an affine function with slope $p$ supports $v$ at $x$ on some neighborhood of $x$, then $p\in\partial v(x)$ relative to all of $\Omega$.
\end{lemma}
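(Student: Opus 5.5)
Let $\ell(y):=v(x)+p\cdot(y-x)$ be the affine function with slope $p$ that supports $v$ at $x$ on a ball $B_\delta(x)\subset\Omega$, so that $v\ge \ell$ on $B_\delta(x)$ and $v(x)=\ell(x)$. We must show that $v(y)\ge\ell(y)$ for every $y\in\Omega$. The plan is to reduce this to the one-dimensional fact that a convex function of one variable lies above every line that supports it locally. The only global hypothesis we use is convexity of $\Omega$, which guarantees that segments from $x$ stay inside the domain.

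Fix $y\in\Omega$ with $y\neq x$. By convexity of $\Omega$, the segment $[x,y]$ lies in $\Omega$, so the function
\[
g(t):=v\bigl(x+t(y-x)\bigr)-\ell\bigl(x+t(y-x)\bigr),\qquad t\in[0,1],
\]
is well defined. It is convex, because it is a convex function composed with an affine map minus an affine function. Moreover $g(0)=0$, and $g(t)\ge0$ for $0\le t<t_0:=\delta/|y-x|$ (take $t_0:=1$ if this exceeds $1$). Choose any $t\in(0,\min\{t_0,1\})$ and write $t=(1-s)\cdot0+s\cdot1$ with $s=t$. Convexity gives
\[
g(t)\le (1-t)g(0)+t\,g(1)=t\,g(1),
\]
and therefore $g(1)\ge g(t)/t\ge0$. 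This says exactly that $v(y)\ge\ell(y)$. Since $y\in\Omega$ was arbitrary, $p\in\partial v(x)$ in the global sense defined above.

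There is no real obstacle here. The one point to check is that convexity of $\Omega$ is what allows us to evaluate $v$ along the whole segment $[x,y]$. Without it, the segment could leave $\Omega$, and a locally supporting slope could fail to support $v$ globally.
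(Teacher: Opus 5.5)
Your proposal is correct and is essentially the paper's argument. The paper also restricts to the segment $x+t(z-x)$, combines the local supporting inequality with the convexity bound $v(y_t)\le(1-t)v(x)+t\,v(z)$ for small $t>0$, and divides by $t$.
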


\begin{proof}
Let $z\in\Omega$.  For all sufficiently small $t>0$, the point
$y_t=x+t(z-x)$ lies in the neighborhood on which the supporting inequality holds.  Hence
\[
v(x)+t p\cdot(z-x)\le v(y_t)
\le (1-t)v(x)+t v(z),
\]
where the second inequality is convexity.  Dividing by $t$ gives
$v(z)\ge v(x)+p\cdot(z-x)$.
\end{proof}

\begin{lemma}[Quadratic shift of subgradients]\label{lem:quadratic-subgradient-shift}
Let $\Omega\subset\R^n$ be open and convex, let $u:\Omega\to\R$ be convex,
and set
\[
v(y):=u(y)+\frac12|y|^2.
\]
Then, for every $x\in\Omega$,
\begin{equation}\label{eq:quadratic-subgradient-shift}
\partial v(x)=\partial u(x)+x.
\end{equation}
Equivalently, $p^v\in\partial v(x)$ if and only if $p^v-x\in\partial u(x)$.
\end{lemma}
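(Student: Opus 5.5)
We prove the two inclusions $\partial u(x)+x\subset\partial v(x)$ and $\partial v(x)-x\subset\partial u(x)$ separately. The first follows from adding inequalities. The second is a local first-order argument, which we then globalize with \Cref{lem:local-global-subgradient}.

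\emph{First inclusion.} Let $p\in\partial u(x)$. The identity
\[
\tfrac12|y|^2=\tfrac12|x|^2+x\cdot(y-x)+\tfrac12|y-x|^2
\]
holds for all $y$, so $\tfrac12|y|^2\ge\tfrac12|x|^2+x\cdot(y-x)$. Adding this to $u(y)\ge u(x)+p\cdot(y-x)$ gives
\[
v(y)\ge v(x)+(p+x)\cdot(y-x)\qquad\text{for all }y\in\Omega.
\]
Hence $p+x\in\partial v(x)$.

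\emph{Reverse inclusion.} Let $p^v\in\partial v(x)$ and set $p:=p^v-x$. Using the same identity, the supporting inequality for $v$ becomes
\[
u(y)\ge u(x)+p\cdot(y-x)-\tfrac12|y-x|^2\qquad\text{for all }y\in\Omega.
\]
We must remove the quadratic error term, and this is where convexity of $u$ enters; it is the only step with any content. Fix $z\in\Omega$ and put $y_t=x+t(z-x)$ for $t\in(0,1]$. Convexity of $u$ gives $u(y_t)\le(1-t)u(x)+t\,u(z)$. Combining this with the inequality above at $y=y_t$ yields
\[
t\bigl(u(z)-u(x)\bigr)\ge t\,p\cdot(z-x)-\tfrac12t^2|z-x|^2.
\]
Dividing by $t$ and letting $t\downarrow0$ gives $u(z)\ge u(x)+p\cdot(z-x)$. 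Therefore $p\in\partial u(x)$.

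Both directions together give \eqref{eq:quadratic-subgradient-shift}. Convexity of $\Omega$ is needed only so that the segment $y_t$ stays in $\Omega$. Equivalently, the reverse direction shows that $p$ supports $u$ at $x$ to first order along every segment from $x$, and then \Cref{lem:local-global-subgradient} makes the support global.
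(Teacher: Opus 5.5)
Your proof is correct and is essentially the paper's argument: the first inclusion uses the same completed-square identity, and for the reverse inclusion you combine the chord inequality for $u$ with the quadratically perturbed support inequality and let $t\downarrow0$. The paper does the same thing, only passing through the one-sided derivative $u'(x;d)$ and the bound $u(y)-u(x)\ge u'(x;d)$. Your closing appeal to \Cref{lem:local-global-subgradient} is unnecessary, and its neighborhood-support hypothesis is not what you established; your chord argument already gives $u(z)\ge u(x)+p\cdot(z-x)$ for every $z\in\Omega$ directly.
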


\begin{proof}
If $p\in\partial u(x)$, then for every $y\in\Omega$,
\[
v(y)-v(x)-(p+x)\cdot(y-x)
=
u(y)-u(x)-p\cdot(y-x)+\frac12|y-x|^2\ge0,
\]
so $p+x\in\partial v(x)$.

Conversely, let $p^v\in\partial v(x)$, fix $y\in\Omega$, and put $d=y-x$.
For $0<t\le1$, convexity of $\Omega$ and the supporting inequality for $v$
give
\[
\frac{u(x+td)-u(x)}{t}
\ge (p^v-x)\cdot d-\frac{t}{2}|d|^2.
\]
Letting $t\downarrow0$ yields
\[
u'(x;d)\ge(p^v-x)\cdot d,
\]
where $u'(x;d)$ is the one-sided directional derivative.  Convexity on the
segment $[x,y]$ gives $u(y)-u(x)\ge u'(x;d)$, and hence
\[
u(y)\ge u(x)+(p^v-x)\cdot(y-x).
\]
Thus $p^v-x\in\partial u(x)$.
\end{proof}

\begin{theorem}[Higher-codimension section-covering criterion]\label{thm:Mooney}
Let $v$ be convex in $B_1\subset\R^n$, let $1\le s\le n-1$ be an integer, and let $E\subset B_1$.  Suppose that for every $x\in E$ and every $p\in\partial v(x)$ there are constants $C_{x,p}<\infty$ and $h_{x,p}>0$ such that
\begin{equation}\label{eq:Mooney-hypothesis}
|S^v_{h,p}(x)|
\le C_{x,p}h^{(n+s)/2}
\qquad(0<h<h_{x,p}).
\end{equation}
Then
\begin{equation}\label{eq:Mooney-conclusion}
\cH^{n-s}(E)=0.
\end{equation}
\end{theorem}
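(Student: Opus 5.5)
The plan is to follow Mooney's proof of the codimension-one case ($s=1$) and track the exponent $(n+s)/2$. First I will make the constants uniform, then run a covering argument in which each section is charged against the $n$-dimensional measure of its image under the subdifferential.

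\emph{Step 1: uniformity in $p$.} Fix $x\in E$. The set $\partial v(x)$ is compact and convex. On a fixed compact $\ol B_\rho\subset B_1$, for $|p'-p|<\delta$ we have
\[
S^v_{h,p'}(x)\cap B_\rho\subset S^v_{h+2\rho\delta,p}(x).
\]
Hence, if I restrict to sections well inside $B_1$ (which is automatic for small $h$ when $v$ is strictly convex along the relevant directions; otherwise I localize), finitely many slopes $p_1,\dots,p_N$ suffice. Taking $\delta\sim h$ then gives constants $C_x,h_x$ valid for all $p\in\partial v(x)$ simultaneously. Writing $E=\bigcup_j E_j$ with $E_j=\{x: C_x\le j,\ h_x\ge 1/j,\ x\in B_{1-1/j}\}$, it suffices to prove $\cH^{n-s}(E_j)=0$ for each $j$. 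So I may assume uniform constants $C,h_0$ and a uniform Lipschitz bound $L$ for $v$ near $E$.

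\emph{Step 2: duality estimate for a single section.} Let $S=S^v_{h,p}(x)$ with $x\in E$, and let $a_1\ge\dots\ge a_n$ be its John semiaxes. Two standard convex-geometry facts should hold:
\[
\partial v(S)\supset p+c_n\,h\,(S-x)^{\circ},\qquad a_n\ge h/L.
\]
The first says the subgradient image of the section contains a scaled polar body; the second says the section contains a ball of radius $h/L$. From the first, $|\partial v(S)|\ge c\,h^n/|S|\ge c\,h^{(n-s)/2}$ by the hypothesis. Passing to Minty coordinates $\Phi:(y,q)\mapsto y+q$ on the graph of $\partial v$ (a bi-Lipschitz-onto-image parametrization of $\R^n$ whose inverse projection to $y$ is $1$-Lipschitz), the set $\Phi(S)$ then has $n$-dimensional measure at least $c\,h^{(n-s)/2}$. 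Meanwhile it sits inside a region whose extent in the $a_1,\dots,a_{n-s}$ directions is controlled. The aim is to choose a ball $B_{r}(x+p)$ in Minty coordinates with radius $r=r(h)$ such that
\[
|\Phi(S)\cap B_{Cr}|\gtrsim r^{n-s}\cdot(\text{something summable}).
\]
Concretely, I take $r\simeq h^{1/2}$, the natural parabolic scale for which $v$ deviates by $h$.

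\emph{Step 3: covering.} For $x\in E_j$ and $h$ small, take Minty balls $B_{r}(\Phi(x,p))$ with $r=\eta\sqrt h$. Apply the Vitali covering lemma in the Minty coordinates, where the sets are genuine Euclidean balls, so the doubling problem of convex sections disappears. This gives disjoint balls $B_{r_i}$ whose $5\times$ enlargements cover $\Phi(E_j)$. Since $\Phi^{-1}$ composed with projection is $1$-Lipschitz, covering $\Phi(E_j)$ by sets of radius $5r_i$ covers $E_j$ by sets of the same diameter bound. The key inequality to prove is that disjointness forces
\[
\sum_i r_i^{\,n-s}\le C\,\eta^{s}\cdot|U|,
\]
where $U$ is a fixed bounded open set. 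This comes from the volume lower bound of Step 2 intersected with each ball, which costs a factor $r_i^{n}/r_i^{s}\cdot\eta^{s}$ coming from the extra decay $h^{s/2}$ beyond $h^{n/2}$. Letting $\eta\to0$ gives $\cH^{n-s}(E_j)=0$.

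\emph{Main obstacle.} I expect Step 2 to be the hard part: showing that the portion of $\Phi(S)$ inside the Minty ball of radius $\sim\sqrt h$ around $\Phi(x,p)$ still has measure $\gtrsim h^{n/2}\cdot(h^{(n-s)/2}/|S|)^{\theta}$. Long thin sections could push most of $\partial v(S)$ outside that ball. I would first try replacing $S$ by $S^v_{h,p}(x)\cap\{|y-x|\le\sqrt h\}$, using convexity of $v$ to show that the polar containment survives with a constant loss; this is the step where Mooney's remark is genuinely needed.
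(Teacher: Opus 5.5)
There is a genuine gap, exactly where you flag the main obstacle. After the reduction $v=v_0+\frac12|x|^2$, which also removes your localization concerns because then $S^v_{h,p}(x)\subset B_{\sqrt{2h}}(x)$, your polar estimate gives $M_v(B_{\sqrt{2h}}(x))\ge c\,h^n/|S^v_{h,p}(x)|\gtrsim(\sqrt h)^{n-s}$. That is only a positive lower $(n-s)$-density of the Monge--Amp\`ere measure on $E$. At best it yields $\cH^{n-s}(E)\lesssim M_v(E)$, not $\cH^{n-s}(E)=0$.

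Your bound $\sum_i r_i^{n-s}\le C\eta^s|U|$ would need a definite share of this mass inside a ball of radius $\eta\sqrt h$, and Step 2 does not supply it. By monotonicity, $\Phi$ contracts neither the $y$- nor the $q$-variable. Along each principal axis, $S$ has length $a_i$ and $h(S-x)^\circ$ has length $h/a_i$, and one of these is at least $\sqrt h$.

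In fact, no argument that uses the volume bound at one slope per point can work. Take $v(y)=|y_1|+\cdots+|y_s|+\frac12|y|^2$ and $x\in\{x_1=\cdots=x_s=0\}$. The slope $p=x$, which lies in $\partial v(x)$, gives $|S^v_{h,x}(x)|\sim h^{(n+s)/2}$ with constants uniform in $x$. Yet this set has positive $\cH^{n-s}$ measure. The hypothesis fails only at other slopes, e.g.\ $x+e_1$, where the section has volume $\sim h^{(n+s-1)/2}$.

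Your Step 1 does not repair this. The inclusion $S^v_{h,p'}(x)\cap B_\rho\subset S^v_{h+2\rho\delta,p}(x)$ is useful only for $\delta\lesssim h$. So the number of slopes grows as $h\downarrow0$, and the constants $C_{p_i},h_{p_i}$ are uncontrolled. In any case, Steps 2--3 then use only one section per point.

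The missing idea is the paper's two-step mechanism, which uses the hypothesis at a second, specially chosen slope.

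\emph{First step.} One proves $d_{n-s}(h)=o(\sqrt h)$ for the John semiaxes of $S^v_{h,0}(0)$. Suppose not. Since $d_n(h)\gtrsim h$ by Lipschitz continuity and $\prod d_i\lesssim h^{(n+s)/2}$, the $s$ shortest axes are all $\sim h_j$, and the John ellipsoid has critical volume $\gtrsim h_j^{(n+s)/2}$. The supporting points in the shortest direction $e$ give $\pm a^\pm e\in\partial v(0)$, and they confine the sections to a strip $\{|e\cdot x|\le Ch\}$. Now tilt to the extremal slope $ae$, where $a=\max\{t:te\in\partial v(0)\}$. The tilted sections at height comparable to $h_j$ contain the ellipsoid together with a point at distance $R_jh_j$ in direction $e$, with $R_j\to\infty$. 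Their volume is therefore $\gtrsim R_jh_j^{(n+s)/2}$, contradicting the hypothesis at the slope $ae$.

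\emph{Second step.} Let $I\le n-s$ be the first index whose axis is $o(\sqrt h)$ along a sequence. Restrict $v$ to the span $V$ of the last $n-I+1$ principal directions and apply the Alexandrov maximum estimate in $V$. Then lift the mass by strong convexity, $M_v(B_r)\ge c\,r^{n-d}M_{v|_V}(D)$. This gives $M_v(B_{r_j})\ge c\,\delta^{-(n-s-I+1)}r_j^{n-s}$ at the small radius $r_j\simeq d_I(h_j)$, where $d_I(h_j)<\delta\sqrt{h_j}$.

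This infinite upper density, not a volume count in Minty coordinates, is what makes a Vitali covering with the fixed Radon measure $M_v$ give $\cH^{n-s}(E)=0$. It requires no uniformity of $C_{x,p},h_{x,p}$ in $x$ or $p$.
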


The case $s=1$ is \cite[Theorem~3.1]{Mooney2015}; the higher-codimension statement is recorded in \cite[Remark~3.4]{Mooney2015}.  Because the latter is stated there without proof, a complete argument is included in \Cref{app:higher-codimension-covering}.  The constants and height thresholds may depend on both the point and the supporting slope.

\section{Small contact sets imply regularity}\label{sec:contact-regularity}

We first make explicit the stabilization implicit in \eqref{eq:contact-dimension}.

\begin{lemma}[Stabilization of the local affine hull]\label{lem:stabilization}
Let $u$ be convex, $x\in\Omega$, and $p\in\partial u(x)$.  There exist $r_0>0$ and an affine subspace $Z_{x,p}$ through $x$ such that
\[
\aff K_{x,p}(r)=Z_{x,p}
\qquad(0<r<r_0),
\]
and $\dim Z_{x,p}=d_u(x,p)$.
\end{lemma}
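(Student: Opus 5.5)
The plan is to exploit monotonicity of the sets $K_{x,p}(r)$ together with convexity; no PDE input is needed.

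First, fix $r_1>0$ with $B_{r_1}(x)\Subset\Omega$. For $0<r<r'\le r_1$ we have $K_{x,p}(r)=K_{x,p}(r')\cap B_r(x)$. Each $K_{x,p}(r)$ is convex, since it is the intersection of the convex ball with the set $\{u\le \ell_{x,p}\}$; that set is convex because $u-\ell_{x,p}$ is convex, and it coincides with $\{u=\ell_{x,p}\}$ because $u\ge\ell_{x,p}$. Each $K_{x,p}(r)$ also contains $x$. Hence
\[
\aff K_{x,p}(r)\subset \aff K_{x,p}(r')
\]
for $r<r'$, and each of these affine hulls is an affine subspace through $x$. The dimensions are integers in $\{0,\dots,n\}$ and are nonincreasing as $r\downarrow0$, so they are eventually constant. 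This justifies the limit in \eqref{eq:contact-dimension}: there is $r_0\in(0,r_1]$ with $\dim\aff K_{x,p}(r)=d_u(x,p)$ for all $0<r<r_0$.

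Next I upgrade equality of dimensions to equality of the subspaces themselves. For $0<r<r'<r_0$ we have nested affine subspaces of the same dimension, and such subspaces coincide. So $\aff K_{x,p}(r)$ is independent of $r\in(0,r_0)$. I define $Z_{x,p}$ to be this common affine subspace. It passes through $x$ because $x\in K_{x,p}(r)$, and its dimension is $d_u(x,p)$ by construction.

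I do not expect a serious obstacle. The only point needing care is the inclusion of affine hulls: it follows from $K_{x,p}(r)\subset K_{x,p}(r')$, which holds because $K_{x,p}(r)=K_{x,p}(r')\cap B_r(x)$. For a later use, it is also worth noting that $x$ lies in the relative interior of $K_{x,p}(r)$ within $Z_{x,p}$; the argument is given below.

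Let $r<r_0$ and set $r'=(r+r_0)/2$. Then $K:=K_{x,p}(r')$ is a convex set in $Z_{x,p}$ of full dimension, so it has nonempty relative interior. Choose a point $z$ in that relative interior. For any $w\in K$, convexity of $K$ and $x\in K$ give that the segment points $x+t(z-x)$ and $x-t(w-x)$ lie in $K$ for suitable small $t$. More directly, $K\cap B_r(x)=K_{x,p}(r)$ spans $Z_{x,p}$, and every point of $K$ within distance $r$ of $x$ belongs to it. So some small relative ball of $Z_{x,p}$ around points near $x$ lies in $K_{x,p}(r)$. This gives a relative contact simplex near $x$, which is the finite-scale object used later in the paper. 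The lemma itself needs only the first two steps.
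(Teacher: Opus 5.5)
Your argument for the lemma is correct and is essentially the paper's proof: the sets $K_{x,p}(r)$ are nested and contain $x$, so the integer dimensions of their affine hulls are nonincreasing and eventually constant, and nested affine subspaces through $x$ of equal dimension coincide.

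One caution about your final paragraph. The side claim that $x$ lies in the relative interior of $K_{x,p}(r)$ within $Z_{x,p}$ is false in general. Take $u(y)=(\max\{y_1,0\})^2$ on $\R^2$ with $x=0$ and $p=0$. Then $Z_{x,p}=\R^2$, but $x$ is a boundary point of the contact set $\{y_1\le 0\}$. What is true, and all that \Cref{prop:section-decay} later uses, is that the convex set $K_{x,p}(r)$ has nonempty relative interior in $Z_{x,p}$. Hence it contains some relative ball, not necessarily centered at $x$.
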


\begin{proof}
The sets $K_{x,p}(r)$ are nested, contain $x$, and their affine dimensions are nonincreasing integers.  Once the dimensions stabilize, the affine hulls form a nested family of affine subspaces through $x$ of equal dimension, hence are equal.
\end{proof}

\begin{lemma}[Smooth approximation on an interior ball]\label{lem:smooth-approximation}
Let $\ol B_{2R}\Subset\Omega$, and let $u\in C(\Omega)$ be a viscosity solution of \eqref{eq:main-equation} in a neighborhood of $\ol B_{2R}$.  There exist smooth $k$-admissible solutions
$u_j\in C^\infty(\ol B_{2R})$ of
\[
\sigma_k(D^2u_j)=1\quad\text{in }B_{2R}
\]
such that $u_j\to u$ uniformly on $\ol B_{2R}$.  Moreover, for every $R'<2R$,
\begin{equation}\label{eq:uniform-gradient-approximants}
\sup_j\|Du_j\|_{L^\infty(B_{R'})}<\infty.
\end{equation}
\end{lemma}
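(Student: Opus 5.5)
The plan is to mollify the boundary data on a slightly larger sphere, solve the Dirichlet problems by \Cref{thm:CNS}, pass to the limit with the comparison principle, and get the gradient bound from \Cref{thm:gradient}.

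\textbf{Step 1: boundary data.} Since $\ol B_{2R}\Subset\Omega$ and $u$ solves \eqref{eq:main-equation} near $\ol B_{2R}$, fix $\rho>2R$ with $u$ a viscosity solution on a neighborhood of $\ol B_\rho$. Choose smooth $g_j$ on $\partial B_\rho$ with $\|g_j-u\|_{L^\infty(\partial B_\rho)}\le 1/j$. By \Cref{thm:CNS}, let $w_j\in C^\infty(\ol B_\rho)$ be the $k$-admissible solution of $\sigma_k(D^2w_j)=1$ in $B_\rho$ with $w_j=g_j$ on $\partial B_\rho$.

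\textbf{Step 2: uniform convergence.} Since $w_j\pm 1/j$ are classical solutions, the comparison principle of the admissible viscosity theory, applied between $u$ and $w_j\pm 1/j$ on $B_\rho$, gives $|w_j-u|\le 1/j$ on $\ol B_\rho$. Set $u_j:=w_j|_{\ol B_{2R}}$. These are smooth up to $\partial B_{2R}$ because $\ol B_{2R}\subset B_\rho$, they solve the equation in $B_{2R}$, and $u_j\to u$ uniformly on $\ol B_{2R}$.

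\textbf{Step 3: gradient bound.} Uniform convergence makes $\sup_j\|u_j\|_{L^\infty(B_\rho)}$ finite, since $\|w_j\|_{L^\infty(B_\rho)}\le\|u\|_{L^\infty(\ol B_\rho)}+1$. Fix $R'<2R$. For each $y\in B_{R'}$, set $t:=\rho-R'>0$, so that $B_{t}(y)\subset B_\rho$. Apply \Cref{thm:gradient} on $B_t(y)$ to get
\[
|Du_j(y)|\le C(n,k)\,t^{-1}\bigl(\|u\|_{L^\infty(\ol B_\rho)}+1\bigr).
\]
This bound is independent of $j$, which gives \eqref{eq:uniform-gradient-approximants}. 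Working on the enlarged ball $B_\rho$ is what lets the bound hold all the way up to any $R'<2R$; it holds even up to $R'=2R$.

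\textbf{Main obstacle.} The delicate step is Step 2. Comparison has to hold between a continuous viscosity solution and classical admissible solutions on a ball, with boundary values that match only up to $1/j$. I would invoke the cited stability and comparison results, \cite[Theorem~2.1 and Remark~2.3]{MooneyRemarks2025}, rather than reprove them. If a direct comparison is unavailable, the fallback is to use the sub/supersolution properties with strict perturbations $w_j\pm(1/j+\eps(|x|^2-\rho^2))$ to obtain strict inequalities, and then let $\eps\to0$.
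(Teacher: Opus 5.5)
Your argument is correct and is essentially the paper's proof: smooth boundary data, \Cref{thm:CNS}, comparison with the vertical shifts $u\pm\eps_j$, and \Cref{thm:gradient} on interior balls. The one difference is that you solve on a slightly larger ball $B_\rho$ and restrict, which the neighborhood hypothesis permits and which gives the gradient bound even for $R'=2R$; the paper instead works directly on $B_{2R}$ with balls of radius $(2R-R')/2$. There is a small sign slip in your optional fallback: the strict barriers should be $w_j\pm\bigl(1/j+\eps(\rho^2-|x|^2)\bigr)$. With $|x|^2-\rho^2$, the upper function becomes a strict subsolution and the lower one a strict supersolution, so touching $u$ produces no contradiction.
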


\begin{proof}
Choose $g_j\in C^\infty(\partial B_{2R})$ with
$g_j\to u|_{\partial B_{2R}}$ uniformly, and set
\[
\eps_j:=\|g_j-u\|_{L^\infty(\partial B_{2R})}\longrightarrow0.
\]
Let $u_j$ be the solutions furnished by \Cref{thm:CNS}.  Since vertical translations do not change the equation, $u-\eps_j$ and $u+\eps_j$ are respectively a viscosity subsolution and supersolution with
\[
u-\eps_j\le g_j\le u+\eps_j
\quad\text{on }\partial B_{2R}.
\]
The comparison principle therefore gives
\[
\|u_j-u\|_{L^\infty(\ol B_{2R})}\le\eps_j.
\]
This is also the approximation asserted in \cite[Remark~2.3]{MooneyRemarks2025}.  In particular, the $L^\infty$ norms of $u_j$ are uniformly bounded.

Fix $R'<2R$ and put $d=2R-R'>0$.  Every point of $B_{R'}$ is the center of a ball of radius $d/2$ contained in $B_{2R}$.  Applying \Cref{thm:gradient}, after translation, on these balls yields
\[
\|Du_j\|_{L^\infty(B_{R'})}
\le C(n,k)d^{-1}\|u_j\|_{L^\infty(B_{2R})},
\]
which is uniform in $j$.
\end{proof}

\begin{remark}\label{rem:approximants-not-convex}
For $k<n$, the Caffarelli--Nirenberg--Spruck approximants need not be convex; they are only $k$-admissible.  This is sufficient here.  The localized Chou--Wang estimate is formulated for $k$-admissible solutions.  In our application the barrier is itself $k$-admissible (and therefore also satisfies the required linearized inequality), so no convexity of the approximating sequence is needed.
\end{remark}

\begin{remark}[Where the constant right-hand side is used]\label{rem:constant-rhs-approximation}
The proof of \Cref{lem:smooth-approximation} uses the constant right-hand side in two separate ways.  First, $u_j$ and $u$ solve the same equation, so vertical shifts of $u$ are still solutions and comparison immediately converts convergence of the boundary data into uniform convergence on the closed ball.  Second, the interior gradient estimate used for the approximants has a constant depending only on $n$, $k$, the radius, and the $L^\infty$ norm.  For a variable right-hand side $f_j(x)$, neither simplification is automatic: vertical shifts do not compare solutions with different $f_j$, and the known gradient estimates involve derivatives of $f_j^{1/k}$.  The precise consequences for rough data are discussed in \Cref{sec:variable-rhs}.
\end{remark}

\begin{lemma}[Barrier regularity]\label{lem:barrier-regularity}
Let $u$ be a convex viscosity solution of \eqref{eq:main-equation} in a neighborhood of $0$.  Suppose
\begin{equation}\label{eq:barrier-normalization}
u(0)=0,
\qquad
u\ge0,
\end{equation}
and that for some $r>0$ with $B_{2r}\Subset\Omega$,
\begin{equation}\label{eq:contact-in-Z}
\{u=0\}\cap B_{2r}\subset Z
\end{equation}
for a linear subspace $Z$ of dimension at most $n-k$.  Then $u$ is smooth in a neighborhood of $0$.
\end{lemma}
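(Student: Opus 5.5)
The plan is to build a $k$-admissible barrier from the saddle quadratic of \Cref{lem:saddle}, oriented so that the $k$ strongly convex directions are transverse to $Z$.  We then apply the localized Chou--Wang estimate (\Cref{thm:Chou-Wang}) to the smooth approximants of \Cref{lem:smooth-approximation} on the sublevel region where the barrier lies above them.

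\textbf{Step 1: positivity away from $Z$.}  Write $\R^n=Y\oplus Z'$ with $Z'\supset Z$, $\dim Z'=n-k$, and $Y=(Z')^\perp$, so $\dim Y=k$.  Since $u\ge0$ is continuous and vanishes on $\ol B_{r}$ only inside $Z$, compactness gives $u\ge\delta>0$ on the set $\{x\in\ol B_r:\ |y|\ge\eta\}$ for each small $\eta>0$, where $x=(y,z)$.  This is the only place the contact hypothesis \eqref{eq:contact-in-Z} enters.

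\textbf{Step 2: the barrier.}  Set
\[
w(y,z):=\epsilon+A|y|^2-\tau|z|^2 .
\]
Since $\tau>0$ can be absorbed into the scaling, \Cref{lem:saddle} applies after replacing $A$ by $A/\tau$ with $A/\tau\ge A_0$.  Then $D^2w\in\Gamma_k$, and in particular $L_{u_j}w\ge0$ by \eqref{eq:garding-linearized}, because $\sigma_k^{ij}(D^2u_j)w_{ij}>0$ for admissible $D^2w$.  We need $w<u$ on $\partial B_r$ and $w>u$ at $0$.  At $0$ we have $w(0)=\epsilon>0=u(0)$.  On $\partial B_r$ there are two cases.
\begin{itemize}
\item[(a)] If $|y|\ge\eta$, then $u\ge\delta$ while $w\le\epsilon+A\eta^2-\tau(r^2-\eta^2)$.
\item[(b)] If $|y|<\eta$, then $|z|^2>r^2-\eta^2$, so $w\le\epsilon+A\eta^2-\tau(r^2-\eta^2)<0\le u$.
\end{itemize}
The constants are chosen in the following order: fix $\tau=1$; take $A=A_0$; choose $\eta$ so small that $A\eta^2<r^2/4$; this determines $\delta$; finally take $\epsilon<\min(\delta,r^2/4)$.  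With these choices $w<u$ on $\partial B_r$.  The key point is that the admissibility constraint $A\ge A_0\tau$ does not conflict with the smallness of $\eta$, because $\eta$ is chosen after $A$.

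\textbf{Step 3: apply Chou--Wang.}  By uniform convergence $u_j\to u$, for large $j$ we still have $w<u_j$ on $\partial B_r$ and $w(0)>u_j(0)+\epsilon/2$.  Let $D_j$ be the connected component of $\{w>u_j\}\cap B_r$ containing $0$.  Then $w=u_j$ on $\partial D_j$, and $\ol{D_j}\subset B_r$, which keeps us inside the region where \eqref{eq:uniform-gradient-approximants} holds.  Applying \Cref{thm:Chou-Wang} with $v=u_j$ gives
\[
(w-u_j)^4|D^2u_j|\le C
\]
uniformly in $j$, since $\|Dw\|_{L^\infty(B_r)}$ is fixed and $\|Du_j\|_{L^\infty(B_r)}$ is uniformly bounded.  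On a small ball $B_\rho$ around $0$ we have $w-u_j\ge\epsilon/4$ uniformly in $j$, by equicontinuity: $w-u$ is continuous and $w(0)-u(0)=\epsilon$.  Hence $|D^2u_j|\le C$ on $B_\rho$.

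\textbf{Step 4: upgrade to smoothness.}  By \Cref{lem:compact-gamma}, the operator $\sigma_k^{1/k}$ is uniformly elliptic and concave on the relevant compact subset of $\Gamma_k$.  Evans--Krylov then gives uniform $C^{2,\alpha}$ bounds on $B_{\rho/2}$, and Schauder bootstrapping gives uniform $C^m$ bounds for every $m$.  Passing to the limit, $u$ is smooth near $0$.

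The main obstacle is the boundary comparison in Step 2, specifically reconciling the admissibility constraint $A\ge A_0\tau$ with the requirement that the barrier stay below $u$ on the thin tubular neighborhood of $Z\cap\partial B_r$, where $u$ may be tiny.  The order of constant choices above is designed to handle this, and it is exactly where $\dim Z\le n-k$ is used: only then are there $k$ directions transverse to $Z$ available to host the positive part of the saddle quadratic.
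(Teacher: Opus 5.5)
Your overall architecture matches the paper's: a saddle barrier with its $k$ positive directions transverse to $Z$, Chou--Wang on the component of $\{w>u_j\}\cap B_r$ containing $0$, then Evans--Krylov. However, Step~2 fails as written. In case (a) the inequality is reversed. For $x\in\partial B_r$ with $|y|\ge\eta$ one has $A|y|^2\ge A\eta^2$ and $|z|^2\le r^2-\eta^2$, so $\epsilon+A\eta^2-\tau(r^2-\eta^2)$ is a \emph{lower} bound for $w$ there, not an upper bound. Region (a) contains the points with $|y|=r$, $z=0$, where $w=\epsilon+A_0r^2$. There you only know $u\ge\delta$, and $\delta$ (fixed by $\eta$) can be much smaller than $A_0r^2$. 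This genuinely happens. The function $u=c|x|^2$ with $c=\tfrac12\binom nk^{-1/k}$ solves the equation with contact set $\{0\}$. For $n=3$, $k=2$, admissibility of $A|y|^2-|z|^2$ forces $A>2$ (since $\sigma_2=4A^2-8A$), so $w>cr^2=u$ at those boundary points. Hence, with $\tau=1$ and $A=A_0$, Step~3 has no domain $D_j\Subset B_r$.

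So the real difficulty is not the thin tube around $Z$; there the negative part dominates automatically once $\eta$ is small. The difficulty is the region far from $Z$, where the positive part of the saddle must lie below the possibly tiny lower bound $\delta$. The missing idea is to shrink the whole quadratic, which is harmless because $\Gamma_k$ is a cone. Choose the constants in this order:
\begin{itemize}
\item keep $A/\tau=A_0$ fixed;
\item choose $\eta$ with $(A_0+1)\eta^2<r^2/2$, which depends only on $A_0$ and $r$;
\item set $\delta=\min\{u(x):x\in\partial B_r,\ |y|\ge\eta\}>0$;
\item take $\tau$ with $\tau A_0r^2<\delta/2$;
\item finally take $\epsilon<\min\{\delta/2,\tau r^2/2\}$.
\end{itemize}
Then $w\le\epsilon+\tau A_0r^2<\delta\le u$ in case (a), $w<\epsilon-\tau r^2/2<0\le u$ in case (b), and $w(0)=\epsilon>0$.

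This is exactly the paper's construction. It takes $w$ to be a small positive multiple of $Q=A|y|^2-|z|^2+\alpha r^2$; the set $\{x\in\partial B_r:Q(x)\ge0\}$ stays a fixed distance from $Z$, and the multiplier is chosen small relative to the minimum of $u$ on that set. With this correction the rest of your argument goes through. You should also note that $B_\rho\subset D_j$ before reading off $|D^2u_j|\le C$ on $B_\rho$: this holds because $B_\rho$ is connected, contains $0$, and lies in $\{w>u_j\}$.
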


\begin{proof}
Enlarge $Z$ if necessary so that $\dim Z=n-k$, and put $Y=Z^\perp$, so that $\dim Y=k$.  Write $x=y+z$ according to the orthogonal splitting $Y\oplus Z$.  Choose $A\ge A_0(n,k)$ as in \Cref{lem:saddle}, fix $\alpha\in(0,1)$, and set
\begin{equation}\label{eq:barrier-Q}
Q(y,z):=A|y|^2-|z|^2+\alpha r^2.
\end{equation}
By \Cref{lem:saddle}, $D^2Q\in\Gamma_k$.

On $\partial B_r$, the inequality $Q\ge0$ implies
\[
(A+1)|y|^2\ge(1-\alpha)r^2.
\]
Consequently, the compact set
\[
F:=\{x\in\partial B_r:Q(x)\ge0\}
\]
has positive distance from $Z$.  By \eqref{eq:contact-in-Z}, $u>0$ on $F$.  We may therefore choose $\delta>0$ sufficiently small that the $k$-admissible function
\begin{equation}\label{eq:w-barrier}
w:=\delta Q
\end{equation}
satisfies
\begin{equation}\label{eq:barrier-gap}
w<u\quad\text{on }\partial B_r,
\qquad
w(0)>u(0)=0.
\end{equation}
Indeed, on $\partial B_r\setminus F$ one has $w<0\le u$, while compactness gives a positive lower bound for $u$ on $F$.

Let $u_j$ be the smooth approximations of $u$ furnished by \Cref{lem:smooth-approximation} on $B_{2r}$.  Uniform convergence and \eqref{eq:barrier-gap} give constants $c_0,\eta,\rho>0$ such that, for every sufficiently large $j$,
\begin{equation}\label{eq:uniform-gaps}
w-u_j\ge c_0\quad\text{on }B_\rho,
\qquad
w-u_j\le-\eta\quad\text{on }\partial B_r.
\end{equation}
Choose a regular value $t_j\in(0,c_0/2)$ of $w-u_j$, and let $D_j$ be the connected component containing $0$ of
\[
\{w-u_j>t_j\}\cap B_r.
\]
The boundary gap in \eqref{eq:uniform-gaps} prevents $\ol D_j$ from meeting $\partial B_r$.  Since $t_j$ is a regular value, $\partial D_j$ is a smooth compact level hypersurface.  Moreover, $B_\rho$ is connected, contains $0$, and lies in the same superlevel set; hence $B_\rho\subset D_j$.  Thus $D_j\Subset B_r$ is a bounded smooth domain and
\begin{equation}\label{eq:CW-boundary-data}
u_j<w-t_j\quad\text{in }D_j,
\qquad
u_j=w-t_j\quad\text{on }\partial D_j.
\end{equation}

The function $w-t_j$ has the same Hessian as the $k$-admissible quadratic $w$, and hence it is an admissible Chou--Wang barrier on $D_j$.  Thus \Cref{thm:Chou-Wang} applies directly.  The gradients of $u_j$ are uniformly bounded on $B_r$ by \Cref{lem:smooth-approximation}, and $w$ is fixed.  Since
\[
w-t_j-u_j\ge c_0/2\quad\text{on }B_\rho,
\]
we obtain
\begin{equation}\label{eq:uniform-Hessian-bound}
\sup_{B_\rho}|D^2u_j|\le C.
\end{equation}

By \Cref{lem:compact-gamma}, the matrices $D^2u_j(B_\rho)$ lie in a fixed compact subset of $\Gamma_k$.  Since $\Gamma_k$ is convex, the convex hull of this compact set is still compactly contained in $\Gamma_k$; hence $F$ is concave and uniformly elliptic on a fixed neighborhood of all Hessian values under consideration.  The local Evans--Krylov theorem (equivalently, after a standard concave uniformly elliptic extension of $F$) and Schauder estimates \cite[Chapter~6]{CaffarelliCabre1995} give uniform higher-derivative bounds on a smaller ball.  Passing to the limit proves that $u$ is smooth near $0$.
\end{proof}

\begin{proof}[Proof of \Cref{thm:contact-regularity}]
Subtract the supporting affine function with slope $p_0$ and translate $x_0$ to the origin.  Then $u(0)=0$ and $u\ge0$.  By \Cref{lem:stabilization} and \eqref{eq:small-contact-intro}, for some sufficiently small $r$ the contact set in $B_{2r}$ is contained in an affine subspace through the origin of dimension at most $n-k$.  Apply \Cref{lem:barrier-regularity}.

For the final assertion, if $x\in\Sing(u)$ and there were a slope $p\in\partial u(x)$ with $d_u(x,p)\le n-k$, the first part would make $x$ regular.  Thus $d_u(x,p)\ge n-k+1$ for every supporting slope, proving \eqref{eq:sing-in-Cq}.
\end{proof}

\section{Positive \texorpdfstring{$k$}{k}-Hessian density and John axes}\label{sec:john}

The central finite-scale estimate controls the smallest $k$ semiaxes of a sublevel body.  We use John's theorem in the form: if $K\subset\R^n$ is a compact convex body with nonempty interior, then there exist $z\in\R^n$ and a positive definite symmetric matrix $A$ such that
\begin{equation}\label{eq:John-general}
z+AB_1\subset K\subset z+nAB_1.
\end{equation}

\begin{lemma}[Product of the smallest $k$ John axes]\label{lem:axis-product}
Let $U\subset\R^n$ be open with $\ol B_R\Subset U$, and let $u\in C(U)$ be convex in $U$ and satisfy
\begin{equation}\label{eq:axis-normalization}
u(0)=0,
\qquad
u\ge0\quad\text{in }U.
\end{equation}
Assume in the viscosity sense in $U$ that
\begin{equation}\label{eq:axis-subsolution}
\sigma_k(D^2u)\ge\lambda>0.
\end{equation}
For sufficiently small $h>0$, set
\[
K_h:=\{u\le h\}\cap\ol B_R,
\]
and choose a John ellipsoid
\[
E_h=z_h+A_hB_1,
\qquad
E_h\subset K_h\subset z_h+nA_hB_1,
\]
where $A_h$ is symmetric and positive definite.  Denote its semiaxes by
\[
a_1(h)\ge\cdots\ge a_n(h)>0.
\]
If $q=n-k+1$, then
\begin{equation}\label{eq:axis-product}
a_q(h)a_{q+1}(h)\cdots a_n(h)
\le 2^{k/2}\binom nk^{1/2}\lambda^{-1/2}h^{k/2}.
\end{equation}
\end{lemma}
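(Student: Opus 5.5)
Since $u\ge0$, $u(0)=0$ and $u$ is convex and continuous, the set $K_h$ contains $0$ and is compact and convex.  It has nonempty interior for small $h>0$, because it contains a neighborhood of $0$ on which $u<h$.  So the John ellipsoid $E_h=z_h+A_hB_1$ exists.  We will touch $u$ from above by an ellipsoidal paraboloid adapted to $E_h$ and read off the bound from the viscosity inequality.

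\textbf{Step 1: the paraboloid.}  Diagonalize $A_h$, writing $E_h=\{y:\sum_i ((y-z_h)\cdot e_i)^2/a_i^2<1\}$ with orthonormal $e_i$, and set
\[
P(y):=h\sum_{i=1}^n\frac{((y-z_h)\cdot e_i)^2}{a_i^2}.
\]
On $\partial E_h\subset K_h$ we have $u\le h=P$.  Consider $u-P$ on $\ol E_h$.  It is $\le0$ on $\partial E_h$.  At the center it is $u(z_h)-0\ge0$.
\begin{itemize}[leftmargin=2em]
\item If $u(z_h)>0$, the maximum of $u-P$ over $\ol E_h$ is positive and hence attained at an interior point $x^*\in E_h$.
\item If $u(z_h)=0$, we replace $P$ by $(1-\eps)P$ (then $u-(1-\eps)P<0$ fails on $\partial E_h$ only if $u=h$ there), or use $P+\eps$ slightly shrunk.
\end{itemize}
The cleanest fix for the degenerate case is to use $P_\eps:=(1+\eps)P$.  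Then $u-P_\eps<0$ on $\partial E_h$ and $u-P_\eps\ge0$ at $z_h$, so the maximum of $u-P_\eps$ is attained in the interior of $E_h$.  After adding a constant, $P_\eps$ touches $u$ from above at an interior point of $E_h\subset U$.

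\textbf{Step 2: the viscosity inequality.}  Apply the subsolution definition with the test function $P_\eps$; arbitrary $C^2$ upper tests are allowed by the remark after the viscosity convention.  Since $D^2P_\eps=2(1+\eps)h\,\mathrm{diag}(a_i^{-2})$ in the basis $e_i$, this gives
\[
\lambda\le\sigma_k(D^2P_\eps)=(2(1+\eps)h)^k\sum_{|I|=k}\prod_{i\in I}a_i^{-2}.
\]
Each term satisfies $\prod_{i\in I}a_i^{-2}\le\prod_{i=q}^n a_i^{-2}$, since the product is largest for the $k$ smallest axes $a_q,\dots,a_n$.  There are $\binom nk$ terms, so
\[
\lambda\le(2(1+\eps)h)^k\binom nk\,(a_q\cdots a_n)^{-2}.
\]
Letting $\eps\to0$ and taking square roots yields \eqref{eq:axis-product}.

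\textbf{Main obstacle.}  The only delicate point is ensuring an interior touching point inside $U$, which Step 1's $\eps$-scaling handles.  Note that the outer John inclusion $K_h\subset z_h+nA_hB_1$ is not needed for this bound.  "Sufficiently small $h$" is used only to ensure that $K_h$ has interior; one could also require $K_h\subset B_R$ so the ellipsoid is relatively open in $U$.  Either way $E_h\subset\ol B_R\Subset U$, so the touching point lies in $U$.
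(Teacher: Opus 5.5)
Your proof is correct and is essentially the paper's argument. The paper uses the same paraboloid $h|A_h^{-1}(x-z_h)|^2$ but optimizes over $K_h$ instead of $\ol{E_h}$, then applies the same bound $\sigma_k(A_h^{-2})\le\binom nk(a_q\cdots a_n)^{-2}$. In the degenerate case it notes that when the extremal value is $0$ the center $z_h\in E_h$ is itself an interior touching point, so your $(1+\eps)$-dilation is unnecessary, though harmless.
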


\begin{proof}
Continuity on a neighborhood of $\ol B_R$ makes $K_h$ closed in the compact ball, while convexity makes it convex.  Since $u(0)=0<h$, continuity also gives a Euclidean neighborhood of $0$ contained in $K_h$.  Thus $K_h$ is a compact convex body with nonempty interior.  Consider the ellipsoidal paraboloid
\begin{equation}\label{eq:John-paraboloid}
P_h(x):=h\,|A_h^{-1}(x-z_h)|^2.
\end{equation}
Every point of $\partial K_h$ lies outside the interior of $E_h$, and hence
\[
P_h\ge h\ge u
\quad\text{on }\partial K_h.
\]
At the center, $P_h(z_h)=0\le u(z_h)$.  Therefore
\[
\mu_h:=\min_{K_h}(P_h-u)\le0.
\]
On $\partial K_h$ one has $P_h-u\ge0$.  If $\mu_h<0$, every minimizer is consequently in $\operatorname{int}K_h$.  If $\mu_h=0$, then
$0=P_h(z_h)-u(z_h)=-u(z_h)$, so the center $z_h$ itself is a minimizer; since
$z_h\in E_h\subset\operatorname{int}K_h$, it is again an interior contact point.  In either case this point lies in $B_R\Subset U$, and $P_h-\mu_h$ touches $u$ from above there.  Since $D^2P_h=2hA_h^{-2}>0$, this is an admissible upper test.  The viscosity inequality yields
\begin{equation}\label{eq:axis-test}
\lambda
\le\sigma_k(D^2P_h)
=(2h)^k\sigma_k(A_h^{-2}).
\end{equation}
The eigenvalues of $A_h^{-2}$ are $a_i^{-2}$.  Since $a_1\ge\cdots\ge a_n$, every $k$-fold product occurring in $\sigma_k(A_h^{-2})$ is bounded above by
\[
(a_q\cdots a_n)^{-2}.
\]
Thus
\[
\sigma_k(A_h^{-2})
\le\binom nk(a_q\cdots a_n)^{-2}.
\]
Combining this estimate with \eqref{eq:axis-test} gives \eqref{eq:axis-product}.
\end{proof}

\begin{figure}[t]
  \centering
  \includegraphics[width=\textwidth]{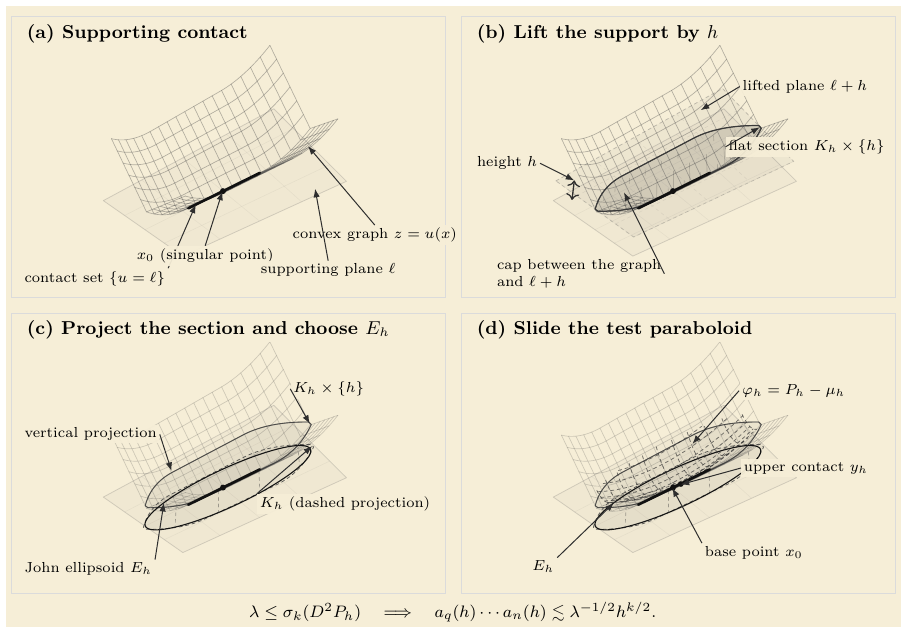}
  \caption{The four geometric steps in the interior viscosity test of
  \Cref{lem:axis-product}, shown in one fixed three-dimensional projection.
  \textup{(a)} A supporting plane $\ell$ meets the graph along a nontrivial contact piece.  \textup{(b)} Lifting $\ell$ by $h$ exposes a graph cap whose projection is the variable-space section $K_h$.  \textup{(c)} In the variable space, John's theorem gives
  $E_h=z_h+A_hB_1\subset K_h\subset z_h+nA_hB_1$.
  \textup{(d)} The ellipsoidal paraboloid determined by $E_h$ is translated until $P_h-\mu_h$ touches $u$ from above at an interior point $y_h$.  The viscosity inequality then controls the product of the smallest $k$ semiaxes.}
  \label{fig:john-viscosity-test}
\end{figure}

The four stages of the argument are summarized in \Cref{fig:john-viscosity-test}.

We need two elementary geometric consequences of ellipsoidal containment.

\begin{lemma}[A relative ball controls a John axis]\label{lem:relative-ball-axis}
Let $1\le d\le n$.  Let $K\subset\R^n$ be convex and suppose
\[
K\subset z+CAB_1,
\]
where $C\ge1$, $A$ is symmetric positive definite, and the semiaxes of $A$ are ordered as $a_1\ge\cdots\ge a_n$.  If $K$ contains a $d$-dimensional Euclidean ball of radius $\rho$ in an affine $d$-plane, then
\begin{equation}\label{eq:relative-ball-axis}
a_d\ge\rho/C.
\end{equation}
\end{lemma}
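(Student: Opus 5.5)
The plan is to reduce the statement to a linear-algebra fact about how a $d$-dimensional flat ball can sit inside an ellipsoid. First I will normalize the geometry. Translate so that $z=0$, and use $K\subset z+CAB_1$ to place the given ball inside the ellipsoid $\mathcal E:=CAB_1$. Write the ball as $B^W_\rho(x_1)=x_1+\{w\in W:|w|\le\rho\}$, where $W$ is a $d$-dimensional linear subspace. Since $\mathcal E$ is symmetric and convex, the reflected ball $-x_1+\{w\in W:|w|\le\rho\}$ also lies in $\mathcal E$. Averaging the two balls by convexity gives $\{w\in W:|w|\le\rho\}\subset\mathcal E$. So we may assume the flat ball is centered at the origin. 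Replacing $A$ by $CA$ multiplies every semiaxis by $C$, so it then suffices to prove the following claim: if $B^W_\rho:=\{w\in W:|w|\le\rho\}\subset AB_1$ with $\dim W=d$, then $a_d\ge\rho$.

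To prove the claim, note that $B^W_\rho\subset AB_1$ means $|A^{-1}w|\le 1$ whenever $w\in W$ and $|w|\le\rho$. Equivalently,
\[
\ip{A^{-2}w}{w}\le\rho^{-2}|w|^2\qquad(w\in W).
\]
The eigenvalues of $A^{-2}$ are $a_1^{-2}\le\cdots\le a_n^{-2}$. By the Courant--Fischer min-max principle, the $d$th smallest of them satisfies
\[
a_d^{-2}=\min_{\dim V=d}\ \max_{w\in V,\,|w|=1}\ip{A^{-2}w}{w}.
\]
Taking $V=W$ gives $a_d^{-2}\le\rho^{-2}$, hence $a_d\ge\rho$.

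An alternative to min-max is a dimension count. Let $V'$ be the span of the eigenvectors belonging to $a_d,\dots,a_n$; it has dimension $n-d+1$. Then $V'\cap W$ contains a unit vector $e$. Since $\rho e\in AB_1$, we get $\rho^2\ge\rho^2\ip{A^{-2}e}{e}\cdot a_d^2$ in the appropriate form, because $\ip{A^{-2}e}{e}\ge a_d^{-2}$ on $V'$. This again yields $\rho\le a_d$.

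The only step needing care is the recentering, that is, the symmetrization by reflection through the center of the ellipsoid. It is where convexity of the ellipsoid, rather than of $K$, is used. Once it is done, the rest is standard spectral theory. Undoing the scaling by $C$ gives $a_d\ge\rho/C$, which is \eqref{eq:relative-ball-axis}.
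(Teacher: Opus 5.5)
Your proof is correct, but it works on the dual side of the paper's argument.

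\textbf{The paper's route.} The paper never recenters. It compares widths: the relative ball has width $2\rho$ in every unit direction $e\in L$, while $z+CAB_1$ has width $2C|Ae|$ (its support function). Hence $|Ae|\ge\rho/C$ on the unit sphere of $L$, and the max--min formula $a_d=\max_{\dim W=d}\min_{e\in W,|e|=1}|Ae|$ finishes. Widths are translation invariant, so the off-center position of the ball costs nothing. The argument only uses the projection of the ellipsoid onto $L$, which is weaker information than containing the flat ball.

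\textbf{Your route.} You work instead with the gauge $|A^{-1}w|$, that is, with the section of the ellipsoid by $W$. Sections, unlike projections, are not translation invariant, so you need the reflection-and-averaging step to move the ball onto the central section. After that, Courant--Fischer for $A^{-2}$ gives $a_d^{-2}\le\rho^{-2}$. The symmetrization is valid: it uses only the central symmetry and convexity of the ellipsoid, and neither proof uses convexity of $K$. It also yields the slightly stronger intermediate fact that the central $W$-section of $z+CAB_1$ already contains a $d$-ball of radius $\rho$. For the lemma as stated, though, the width comparison is the shorter route.

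\textbf{A slip in your alternative argument.} In the dimension-count version, the displayed inequality is written in the wrong direction. For the unit vector $e\in V'\cap W$, the correct chain is $1\ge\rho^2\langle A^{-2}e,e\rangle\ge\rho^2a_d^{-2}$, which gives $\rho\le a_d$ before undoing the scaling by $C$.
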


\begin{proof}
Let $L$ be the direction space of the affine $d$-plane.  The width of the relative ball in every unit direction $e\in L$ equals $2\rho$, whereas the width of $z+CAB_1$ in direction $e$ equals $2C|Ae|$.  Hence
\[
\min_{e\in L,\,|e|=1}|Ae|\ge\rho/C.
\]
The min--max characterization of the singular values gives
\[
a_d=\max_{\dim W=d}\min_{e\in W,\,|e|=1}|Ae|,
\]
which proves the claim.
\end{proof}

\begin{lemma}[Ellipsoid--ball intersection]\label{lem:ellipsoid-ball}
Let $A$ be symmetric positive definite with semiaxes $a_1\ge\cdots\ge a_n$.  For all $z,x\in\R^n$, $C\ge1$, and $r>0$,
\begin{equation}\label{eq:ellipsoid-ball-intersection}
|(z+CAB_1)\cap B_r(x)|
\le C_n C^n\prod_{i=1}^n\min\{a_i,r\}.
\end{equation}
\end{lemma}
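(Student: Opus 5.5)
We reduce to a product estimate by placing the ellipsoid in its principal axes.  Set $E:=z+CAB_1$ and translate so that $z=0$.  Choose an orthonormal eigenbasis $e_1,\dots,e_n$ of $A$ with $Ae_i=a_ie_i$.  In these coordinates
\[
E=\Bigl\{y:\sum_i y_i^2/(Ca_i)^2<1\Bigr\}
\subset\prod_{i=1}^n[-Ca_i,Ca_i].
\]
We intersect with the cube $\prod_{i=1}^n[x_i-r,x_i+r]$, which contains $B_r(x)$; here $x_i$ are the coordinates of $x$ in the same basis.  The intersection of the two boxes is a box whose $i$th side is the intersection of the intervals $[-Ca_i,Ca_i]$ and $[x_i-r,x_i+r]$.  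That side has length at most
\[
\min\{2Ca_i,2r\}\le 2C\min\{a_i,r\},
\]
using $C\ge1$.  Taking the product over $i$ gives
\[
|E\cap B_r(x)|\le 2^nC^n\prod_{i=1}^n\min\{a_i,r\},
\]
so \eqref{eq:ellipsoid-ball-intersection} holds with $C_n=2^n$.

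Since this is just an orthogonal change of coordinates plus box containment, there is no real obstacle.  The only point to check is that the bounding box of the ellipsoid is taken in its principal frame, and that the ball is enclosed by a cube aligned with that same frame, not with the original coordinates.  Rotations preserve Lebesgue measure, so this choice of frame is harmless.
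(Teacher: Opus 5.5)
Your proposal is correct and matches the paper's proof: rotate to the principal frame of $A$, enclose $z+CAB_1$ in the box with sides $2Ca_i$ and $B_r(x)$ in an aligned cube with sides $2r$, then bound each side of the intersection box by $2\min\{Ca_i,r\}\le 2C\min\{a_i,r\}$. Your explicit constant $C_n=2^n$ is what the paper's argument gives as well.
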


\begin{proof}
Rotate to principal-axis coordinates for $A$.  The ellipsoid is contained in a rectangular box with side lengths $2Ca_i$, while the ball is contained in an axis-parallel box with side lengths $2r$.  Their intersection is contained in a box whose $i$th side has length at most $2\min\{Ca_i,r\}\le2C\min\{a_i,r\}$.  Taking volumes proves the estimate.
\end{proof}

\section{Section-volume decay at flat supporting contacts}\label{sec:section-decay}

We now turn contact dimension into a quantitative section-volume bound.  First observe that a supporting contact set is convex: if $u\ge\ell$ and equality holds at two points, convexity forces equality on the segment joining them.

\begin{proposition}[Quantitative section decay at a flat contact]\label{prop:section-decay}
Let $u$ be convex in $\Omega$ and satisfy \eqref{eq:positive-subsolution}.  Fix $x_0\in\Omega$, $p_0\in\partial u(x_0)$, and an integer
\[
q=n-k+1\le m\le n-1.
\]
Assume that, after translating $x_0$ to the origin and subtracting the supporting affine function with slope $p_0$, there are numbers $R>0$, $\rho>0$, an affine $m$-plane $a+L$, and a relative ball
\begin{equation}\label{eq:quantitative-contact-ball}
a+B_\rho^L(0)
\subset
\{u=0\}\cap B_{R/2},
\qquad B_R\Subset\Omega.
\end{equation}
Define
\begin{equation}\label{eq:v-perturbation}
v(x):=u(x)+\frac12|x|^2,
\qquad
p_0^v:=p_0+x_0\in\partial v(x_0).
\end{equation}
Then there is $c_n>0$ such that, for
\begin{equation}\label{eq:quantitative-height-range}
0<h<c_n\min\{R^2,\rho^2\},
\end{equation}
one has
\begin{equation}\label{eq:section-decay-quantitative}
|S^v_{h,p_0^v}(x_0)|
\le
C(n,k,m)\lambda^{-1/2}
\rho^{-(m-q+1)}h^{(m+k)/2}.
\end{equation}
Consequently, if
\begin{equation}\label{eq:contact-dim-m}
d_u(x_0,p_0)\ge m,
\end{equation}
then there exist constants $C_{x_0,p_0}<\infty$ and $h_{x_0,p_0}>0$ such that
\begin{equation}\label{eq:section-decay}
|S^v_{h,p_0^v}(x_0)|
\le C_{x_0,p_0}\lambda^{-1/2}h^{(m+k)/2}
\qquad(0<h<h_{x_0,p_0}).
\end{equation}
\end{proposition}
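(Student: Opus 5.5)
After the normalization in the statement, $x_0=0$, $u(0)=0$ and $u\ge0$, so the section becomes
\[
S^v_{h,p_0^v}(0)=\{x:u(x)+\tfrac12|x|^2<h\}.
\]
This set lies inside $\{u<h\}\cap B_{\sqrt{2h}}$. For $h<c_nR^2$ we have $\sqrt{2h}<R/2$, so
\[
S^v_{h,p_0^v}(0)\subset K_h\cap B_{\sqrt{2h}},
\qquad K_h=\{u\le h\}\cap\ol B_R.
\]
The plan is to estimate the volume of $K_h\cap B_{\sqrt{2h}}$ with the John ellipsoid of $K_h$. Take $E_h=z_h+A_hB_1\subset K_h\subset z_h+nA_hB_1$ with semiaxes $a_1\ge\cdots\ge a_n$. \Cref{lem:ellipsoid-ball} with $C=n$ and $r=\sqrt{2h}$ gives
\[
|S^v_{h,p_0^v}(0)|\le C_n n^n\prod_{i=1}^n\min\{a_i,\sqrt{2h}\}.
\]
The remaining work is to choose, factor by factor, which bound to use.

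First, for $i\le m$ I will bound $\min\{a_i,\sqrt{2h}\}\le\sqrt{2h}$ when $i<q$. For the last $k$ indices $i=q,\dots,n$ I will use $a_i$ itself and invoke \Cref{lem:axis-product}, which is available because $\ol B_R\Subset\Omega$ and the hypotheses hold there. The product of these factors is then at most $C\lambda^{-1/2}h^{k/2}$. This yields only $h^{(q-1)/2}\cdot h^{k/2}=h^{n/2}$, which falls short of $h^{(m+k)/2}$. To reach that exponent I must also gain $h^{1/2}$ from each index $i=q,\dots,m$, that is, $m-q+1$ extra factors.

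The flat contact supplies this gain. Because $a+B^L_\rho\subset\{u=0\}\subset K_h$, \Cref{lem:relative-ball-axis} gives $a_m\ge\rho/n$, and hence $a_i\ge\rho/n$ for all $i\le m$. I will split the product $a_q\cdots a_n$ into $(a_q\cdots a_m)(a_{m+1}\cdots a_n)$. The first factor is at least $(\rho/n)^{m-q+1}$, so
\[
a_{m+1}\cdots a_n\le n^{m-q+1}\rho^{-(m-q+1)}\,a_q\cdots a_n\le C\rho^{-(m-q+1)}\lambda^{-1/2}h^{k/2}.
\]
In the product of minima I will then use $\sqrt{2h}$ for every $i\le m$, which gives $h^{m/2}$, and $a_i$ for every $i>m$. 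The result is
\[
|S^v_{h,p_0^v}(0)|\le C h^{m/2}\,\rho^{-(m-q+1)}\lambda^{-1/2}h^{k/2},
\]
and with the constants tracked this is exactly \eqref{eq:section-decay-quantitative}.

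The restriction $h<c_n\rho^2$ does not seem to be needed in this argument, apart from making the bound nontrivial. I would keep it as stated, and it also ensures that $K_h$ is a genuine body inside $B_R$. I should check that $K_h$ being compact with nonempty interior is all that \Cref{lem:axis-product} requires, and that the hypothesis "sufficiently small $h$" there poses no problem.

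For the consequence \eqref{eq:section-decay}: if $d_u(x_0,p_0)\ge m$, then \Cref{lem:stabilization} shows that for small $r$ the contact set $K_{x_0,p_0}(r)$ is a convex set whose affine hull has dimension at least $m$. It therefore contains an $m$-dimensional simplex, and hence an $m$-dimensional relative ball of some radius $\rho>0$ inside $B_{R/2}$, where I choose $R$ with $B_R\Subset\Omega$ and $r\le R/2$. Applying the quantitative estimate with $C_{x_0,p_0}=C\rho^{-(m-q+1)}$ and $h_{x_0,p_0}=c_n\min\{R^2,\rho^2\}$ finishes the proof.

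The main delicate step is the split of the axis product: the lower bound on $a_m$ must be converted into an upper bound on the trailing axes without losing the $h^{1/2}$ factors from indices $q,\dots,m$. A related point to check is that this works because every index $\le m$ is capped at $\sqrt{2h}$ in the minimum, rather than at $a_i$.
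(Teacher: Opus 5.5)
Your proposal is correct and follows the paper's proof essentially step for step: normalize, take the John ellipsoid of $K_h$, obtain $a_m(h)\ge\rho/n$ from the relative contact ball via \Cref{lem:relative-ball-axis}, divide the smallest-$k$-axes bound of \Cref{lem:axis-product} by $(\rho/n)^{m-q+1}$, and finish with \Cref{lem:ellipsoid-ball} using the inclusion of the section in $K_h\cap B_{\sqrt{2h}}$. Your side remark is also right: the paper uses $h<c_n\rho^2$ only to note that $a_i(h)>\sqrt{2h}$ for $i\le m$, but the trivial bounds $\min\{a_i,\sqrt{2h}\}\le\sqrt{2h}$ for $i\le m$ and $\min\{a_i,\sqrt{2h}\}\le a_i$ for $i>m$ already suffice (the restriction only marks where the estimate improves on $|B_{\sqrt{2h}}|$), and the proof of \Cref{lem:axis-product} never actually uses smallness of $h$.
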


\begin{proof}
It suffices to prove the quantitative assertion under the normalization
\begin{equation}\label{eq:section-normalization}
x_0=0,
\qquad p_0=0,
\qquad u(0)=0,
\qquad u\ge0.
\end{equation}
For $h>0$, apply \Cref{lem:axis-product} to
\[
K_h=\{u\le h\}\cap\ol B_R
\]
and write the John inclusions as
\[
z_h+A_hB_1\subset K_h\subset z_h+nA_hB_1,
\]
with semiaxes
\[
a_1(h)\ge\cdots\ge a_n(h)>0.
\]
The contact ball in \eqref{eq:quantitative-contact-ball} is contained in $K_h$.  By \Cref{lem:relative-ball-axis}, applied to the upper John inclusion,
\begin{equation}\label{eq:a_m-lower-explicit}
a_m(h)\ge \rho/n.
\end{equation}
Since $m\ge q$ and the axes are decreasing,
\[
a_q(h)\cdots a_m(h)
\ge (\rho/n)^{m-q+1}.
\]
Dividing the smallest-axis product estimate \eqref{eq:axis-product} by this lower bound gives
\begin{equation}\label{eq:remaining-axes-explicit}
a_{m+1}(h)\cdots a_n(h)
\le
C(n,k,m)\lambda^{-1/2}
\rho^{-(m-q+1)}h^{k/2}.
\end{equation}

Under the normalization \eqref{eq:section-normalization}, the relevant section is
\begin{equation}\label{eq:section-v-normalized}
S^v_{h,0}(0)
=
\{y:u(y)+\tfrac12|y|^2<h\}.
\end{equation}
If $h<R^2/8$, then
\begin{equation}\label{eq:section-in-intersection}
S^v_{h,0}(0)
\subset
\{u<h\}\cap B_{\sqrt{2h}}(0)
\subset
(z_h+nA_hB_1)\cap B_{\sqrt{2h}}(0).
\end{equation}
Choose $c_n$ small enough that \eqref{eq:quantitative-height-range} implies
\[
\sqrt{2h}\le \frac{\rho}{2n}\le \frac12a_m(h).
\]
Then $a_i(h)>\sqrt{2h}$ for $1\le i\le m$.  Applying \Cref{lem:ellipsoid-ball} and then \eqref{eq:remaining-axes-explicit},
\begin{align*}
|S^v_{h,0}(0)|
&\le C_n\prod_{i=1}^n\min\{a_i(h),\sqrt{2h}\}\\
&\le C(n,m)h^{m/2}a_{m+1}(h)\cdots a_n(h)\\
&\le C(n,k,m)\lambda^{-1/2}
\rho^{-(m-q+1)}h^{(m+k)/2}.
\end{align*}
This proves \eqref{eq:section-decay-quantitative}.  Undoing the translation and affine subtraction is justified by the identity
\[
v(y)-v(x_0)-p_0^v\cdot(y-x_0)
=
u(y)-u(x_0)-p_0\cdot(y-x_0)+\frac12|y-x_0|^2.
\]

If \eqref{eq:contact-dim-m} holds, then for some sufficiently small $R$ the local contact set has affine dimension at least $m$.  Since it is convex, it contains an $m$-dimensional relative ball of some radius $\rho_{x_0,p_0}>0$ in $B_{R/2}$.  Applying the quantitative assertion with this radius yields \eqref{eq:section-decay}.
\end{proof}

\begin{figure}[t]
  \centering
  \includegraphics[width=\textwidth]{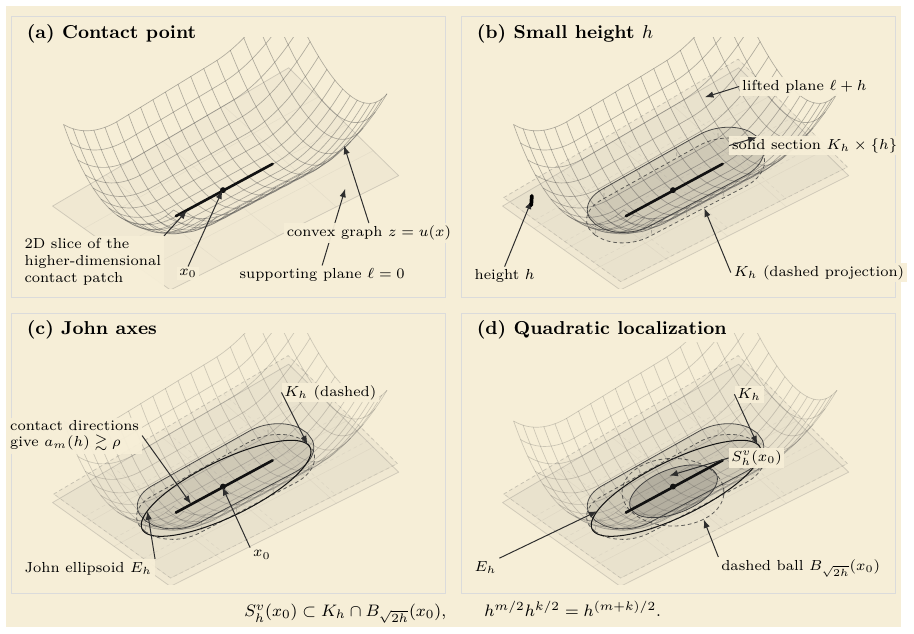}
  \caption{The four geometric stages of the localization mechanism in
  \Cref{prop:section-decay}, shown in one fixed three-dimensional projection.
  \textup{(a)} The supporting plane contains a contact piece of fixed width
  $\rho$ in $m$ directions.  \textup{(b)} At height $h$, its sublevel body is
  $K_h$.  \textup{(c)} Width comparison and the singular-value min--max
  principle give $a_m(h)\gtrsim\rho$ for a John ellipsoid $E_h$; the contact
  directions need not coincide with its principal axes.  \textup{(d)} For
  $v=u+\tfrac12|x|^2$, one has
  $S_h^v(x_0)\subset K_h\cap B_{\sqrt{2h}}(x_0)$, which yields the factor
  $h^{m/2}\cdot h^{k/2}=h^{(m+k)/2}$.}
  \label{fig:section-localization}
\end{figure}

The geometric inputs and the two sources of the exponent in \eqref{eq:section-decay-quantitative} are displayed in \Cref{fig:section-localization}.

\begin{remark}[What is uniform in the preceding estimate]\label{rem:uniform-contact-stratum}
For fixed $R$ and $\rho$, the constant and the height range in \eqref{eq:section-decay-quantitative} are uniform over all points and supporting slopes whose contact sets contain an $m$-dimensional relative ball of radius $\rho$ inside $B_{R/2}$.  More precisely, on any class of point--slope pairs $(x,p)$ for which
\[
B_R(x)\Subset\Omega
\quad\text{and}\quad
\{u=\ell_{x,p}\}\cap B_{R/2}(x)
\text{ contains an $m$-ball of radius at least $\rho$},
\]
the estimate
\[
|S^{u+|\cdot|^2/2}_{h,p+x}(x)|
\le C(n,k,m)\lambda^{-1/2}\rho^{-(m-q+1)}h^{(m+k)/2}
\]
holds with the same constant for every
$0<h<c_n\min\{R^2,\rho^2\}$.  Thus the proof supplies a genuinely quantitative estimate on each uniformly nondegenerate contact class.  The passage from contact dimension to such a class is not uniform: the radius $\rho_{x,p}$ may tend to zero arbitrarily fast along a singular sequence or as the supporting slope varies.
\end{remark}

\section{Hausdorff stratification and proof of the main theorem}\label{sec:main-proof}

\begin{proof}[Proof of \Cref{thm:stratification}]
After a translation, fix concentric balls $B_R\Subset B_{2R}\Subset\Omega$ and set
\[
v(x):=u(x)+\tfrac12|x|^2.
\]
We apply \Cref{thm:Mooney} to the restriction of $v$ to $B_{2R}$.  If
$x\in B_R$ and $p^v$ is a supporting slope for this restriction at $x$, then
\Cref{lem:local-global-subgradient} shows that $p^v\in\partial v(x)$ relative to all of $\Omega$.  By \Cref{lem:quadratic-subgradient-shift},
\[
p:=p^v-x\in\partial u(x).
\]
For $x\in\cC_m(u)\cap B_R$ one has $d_u(x,p)\ge m$, and \Cref{prop:section-decay} gives
\[
|S^v_{h,p^v}(x)|
\le C_{x,p^v}h^{(m+k)/2}
\]
for all sufficiently small $h$.  Moreover,
\[
v(y)-v(x)-p^v\cdot(y-x)
\ge \frac12|y-x|^2,
\]
so $S^v_{h,p^v}(x)\subset B_{\sqrt{2h}}(x)$.  Thus, after decreasing the point-dependent height threshold, the section is contained in $B_{2R}$ and agrees with the section of the restricted function.  Put
\begin{equation}\label{eq:s-def}
s:=m+k-n.
\end{equation}
The assumption $m\ge n-k+1$ gives $s\ge1$, while $m\le n-1$ gives $s\le k-1\le n-1$.  Since
\[
\frac{m+k}{2}=\frac{n+s}{2},
\]
translating $B_{2R}$ to the unit ball and rescaling space multiplies section volumes and Hausdorff measures only by fixed powers of $R$; it does not change the exponent.  Hence \Cref{thm:Mooney} gives
\[
\cH^{n-s}\bigl(\cC_m(u)\cap B_R\bigr)=0.
\]
Because $n-s=2n-m-k$, this is \eqref{eq:strata-bound}.  Exhausting $\Omega$ by interior balls proves the local assertion.

Finally, suppose $x\in\cC_n(u)$.  For any $p\in\partial u(x)$, the stabilized local contact set has full affine dimension and therefore contains an open ball on which $u$ agrees with $\ell_{x,p}$.  At an interior point $y$ of that ball, the function
\[
\ell_{x,p}(z)+\eps|z-y|^2
\]
touches $u$ from above.  Choosing $\eps>0$ so small that $\sigma_k(2\eps I)<\lambda$ contradicts \eqref{eq:positive-subsolution}.  Hence $\cC_n(u)=\varnothing$.
\end{proof}

\begin{proof}[Proof of \Cref{thm:main}]
By \Cref{thm:contact-regularity},
\[
\Sing(u)\subset\cC_q(u),
\qquad q=n-k+1.
\]
Applying \Cref{thm:stratification} with $m=q$ and using $2n-q-k=n-1$ gives
\[
\cH^{n-1}_{\mathrm{loc}}(\Sing(u))=0.
\]
At every point of $\Reg(u)$, the equation is uniformly elliptic in a sufficiently small neighborhood after the local Hessian bound, and the standard Evans--Krylov and Schauder estimates \cite[Chapter~6]{CaffarelliCabre1995} imply smoothness.
\end{proof}

\section{Consequences}\label{sec:consequences}

We prove the Sobolev consequence stated in the introduction.  The argument is the same measure-theoretic mechanism used by Mooney for singular Monge--Amp\`ere solutions: the distributional Hessian of a convex function cannot concentrate on a set of vanishing $\cH^{n-1}$ measure \cite{Mooney2015}.  This is a qualitative absolute-continuity argument, rather than an affine-normalized higher-integrability estimate of the type developed in \cite{DePhilippisFigalli2013,DePhilippisFigalliSavin2013,Schmidt2013}; in particular, it supplies no quantitative $L\log L$ modulus and no $L^{1+\varepsilon}$ gain for the Hessian density.

\begin{lemma}[A codimension-one growth bound for the Hessian measure]\label{lem:Hessian-measure-growth}
Let $U\subset\R^n$ be open and convex, and let $u$ be convex in $U$.  Then $u$ is locally Lipschitz and its distributional Hessian $D^2u$ is a positive semidefinite symmetric matrix-valued Radon measure.  Indeed, for every $\xi\in\R^n$, convexity on lines implies that the distribution $\partial_{\xi\xi}u$ is positive, hence is a Radon measure; polarization then shows that every $\partial_{ij}u$ is a signed Radon measure.  This is also a standard consequence of the distributional characterization of convexity; see, for example, \cite[Chapter~6]{EvansGariepy2015}.  If $B_{2r}(x)\Subset U$, then
\begin{equation}\label{eq:Laplacian-growth}
\Delta u(B_r(x))
\le C_n\operatorname{Lip}(u;B_{2r}(x))\,r^{n-1}.
\end{equation}
Consequently, $|D^2u|$ vanishes on every Borel set of zero $\cH^{n-1}$ measure.
\end{lemma}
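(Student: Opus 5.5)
The lemma has three parts: the structural facts about $D^2u$, the growth bound \eqref{eq:Laplacian-growth}, and the vanishing on $\cH^{n-1}$-null sets. I would treat them in that order.

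\emph{Structural facts.} Local Lipschitz continuity is the standard convex estimate. On a ball $B_{2r}(x)\Subset U$, $u$ is bounded above by its maximum on the vertices of an enclosing simplex and bounded below via convexity. Slopes of chords then give a Lipschitz bound on $B_r(x)$. For positivity, fix $\xi$ and $\varphi\in C_c^\infty(U)$ with $\varphi\ge0$. The second difference quotients $t^{-2}\bigl(u(y+t\xi)+u(y-t\xi)-2u(y)\bigr)$ are nonnegative. Pairing them with $\varphi$ and moving the differences onto $\varphi$ gives $\langle\partial_{\xi\xi}u,\varphi\rangle\ge0$ in the limit $t\to0$. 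A positive distribution is a Radon measure. The identity $\partial_{ij}=\tfrac14(\partial_{e_i+e_j,e_i+e_j}-\partial_{e_i-e_j,e_i-e_j})$ makes each entry a signed measure. The resulting matrix measure is positive semidefinite, since $\xi^TD^2u\,\xi\ge0$ for all $\xi$, and a countable dense set of $\xi$ suffices.

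\emph{Growth bound.} Choose a cutoff $\varphi\in C_c^\infty(B_{2r}(x))$ with $0\le\varphi\le1$, $\varphi=1$ on $B_r(x)$, and $|D\varphi|\le C/r$. Since $u$ is Lipschitz, $Du\in L^\infty$ and one integration by parts is legitimate:
\[
\Delta u(B_r(x))\le\int\varphi\,d\Delta u=-\int Du\cdot D\varphi\,dy\le \operatorname{Lip}(u;B_{2r}(x))\cdot\frac{C}{r}\,|B_{2r}|.
\]
This yields $C_n\operatorname{Lip}\,r^{n-1}$.

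\emph{Vanishing on null sets.} Positive semidefiniteness gives $|D^2u|\le\tr D^2u=\Delta u$ as measures, with operator or Frobenius norm up to a dimensional constant. So it suffices to show $\Delta u(N)=0$ whenever $\cH^{n-1}(N)=0$. By inner regularity and $\sigma$-finiteness, I would reduce to a Borel set $N\subset K$, with $K$ compact and $\delta:=\dist(K,\partial U)>0$. Let $L$ be the Lipschitz constant of $u$ on the $\delta$-neighborhood of $K$.

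Given $\eps>0$, use $\cH^{n-1}(N)=0$ to cover $N$ by sets of diameter less than $\min(\eps,\delta/4)$ with $\sum(\operatorname{diam})^{n-1}<\eps$. Replace each covering set by a ball $B_{r_i}(x_i)$ with $r_i$ equal to its diameter, centered at a point of $N$, so that $B_{2r_i}(x_i)$ stays in the $\delta$-neighborhood. The growth bound then gives
\[
\Delta u(N)\le\sum_i C_nL\,r_i^{n-1}\le C_nL\,\eps.
\]
Letting $\eps\to0$ proves the claim.

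The only mildly delicate step is justifying the integration by parts against a Lipschitz function. I would handle it by mollification: the mollified $u_\eps$ is convex, $Du_\eps\to Du$ a.e.\ with uniform bounds, and $\Delta u_\eps\rightharpoonup\Delta u$ weakly. The rest is routine.
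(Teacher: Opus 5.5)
Your proof is correct and follows essentially the paper's route. You prove positivity of $\partial_{\xi\xi}u$ by second difference quotients where the paper uses Fubini on lines. You get the $r^{n-1}$ growth by testing a cutoff against the positive measure $\Delta u$: your single integration by parts is just the definition of the weak gradient of a Lipschitz function, so no mollification is needed, whereas the paper moves both derivatives onto $\eta$ and subtracts $u(x)$ using $\int\Delta\eta=0$. The covering step and the bound $|D^2u|\le\Delta u$ are the same.

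Two small technical fixes. Take $L$ to be the Lipschitz constant on the closed $\delta/2$-neighborhood of $K$, which is a compact subset of $U$; $u$ need not be Lipschitz on the full $\delta$-neighborhood, and your balls only reach distance $\delta/2$ anyway. Also use radii slightly larger than the diameters, so that the open balls genuinely contain the covering sets, including degenerate ones of diameter zero.
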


\begin{proof}
For completeness, if $\varphi\in C_c^\infty(U)$ is nonnegative and $\xi\in\R^n$ is a unit vector, set
\[
I_y:=\{t\in\R:y+t\xi\in U\},\qquad y\in\xi^\perp.
\]
Because $U$ is convex, each nonempty $I_y$ is an interval.  The restriction
$t\mapsto u(y+t\xi)$ is convex on $I_y$, while
$t\mapsto\varphi(y+t\xi)$ has compact support in $I_y$.  Fubini's theorem
and the one-dimensional distributional characterization of convexity therefore give
\[
\langle\partial_{\xi\xi}u,\varphi\rangle
=\int_{\xi^\perp}\left(\int_{I_y}u(y+t\xi)\,\frac{d^2}{dt^2}\varphi(y+t\xi)\,dt\right)d\cH^{n-1}(y)\ge0,
\]
where empty fibers contribute zero.
Thus $\partial_{\xi\xi}u$ is a positive distribution and hence a Radon measure.  Taking $\xi=e_i$ and $\xi=(e_i+e_j)/\sqrt2$ and using polarization produces all matrix entries; positivity for every $\xi$ gives positive semidefiniteness of the matrix measure.

Choose $\eta\in C_c^\infty(B_{2r}(x))$ with $0\le\eta\le1$, $\eta=1$ on $B_r(x)$, and $|D^2\eta|\le C_nr^{-2}$.  Since $\Delta u$ is a positive measure,
\begin{align*}
\Delta u(B_r(x))
&\le\int\eta\,d(\Delta u)
=\int u\,\Delta\eta\\
&=\int (u-u(x))\Delta\eta
\le \left|\int (u-u(x))\Delta\eta\right|
\le C_n\operatorname{Lip}(u;B_{2r}(x))r^{n-1}.
\end{align*}
Here we used $\int\Delta\eta=0$.  Let $K\Subset U$ and choose $K'\Subset U$ with $K\subset\operatorname{int}K'$.  Convexity gives a finite Lipschitz constant $L$ for $u$ on $K'$.  If $E\subset K$ and $\cH^{n-1}(E)=0$, then for every $\eps>0$ one may cover $E$ by balls $B_{r_i}(x_i)$ with $B_{2r_i}(x_i)\subset K'$ and
$\sum_i r_i^{n-1}<\eps$.  Applying \eqref{eq:Laplacian-growth} and countable subadditivity gives
\[
\Delta u(E)\le C_nL\sum_i r_i^{n-1}\le C_nL\eps,
\]
so $\Delta u(E)=0$.

Finally, if $M=D^2u$ and $A$ is Borel, positivity means that the matrix $M(A)$ is positive semidefinite.  For the Frobenius norm,
$|M(A)|\le\tr M(A)=\Delta u(A)$.  Taking sums over Borel partitions shows that the total variation measure satisfies
$|D^2u|\le\Delta u$ (and hence also the displayed estimate with a dimensional constant for any fixed matrix norm).  This completes the proof.
\end{proof}

\begin{proof}[Proof of \Cref{cor:W21-intro}]
Fix $K\Subset\Omega$.  Convexity makes $u$ Lipschitz on a neighborhood of $K$.  On $\Reg(u)$, the distributional Hessian agrees with the smooth Hessian and is absolutely continuous.  Therefore the singular part of the Hessian measure is supported on $\Sing(u)$.  By \Cref{thm:main},
\[
\cH^{n-1}(\Sing(u)\cap K)=0,
\]
and by \Cref{lem:Hessian-measure-growth} the Hessian measure does not charge this set.  Hence $D^2u$ has no singular part on $K$, and its density belongs to $L^1(K)$.  Since $K$ was arbitrary, $u\in W^{2,1}_{\mathrm{loc}}(\Omega)$.
\end{proof}

\section{Sectional mean values and the geometric origin of the argument}\label{sec:sectional-formulas}

This section is logically independent of the proof of \Cref{thm:main}.  It may be read after \Cref{sec:contact-regularity,sec:john,sec:section-decay,sec:main-proof}.  We first prove two companion Grassmannian formulas and then explain why the first of them nevertheless points toward the contact-set threshold and the finite-scale proof architecture.

\subsection{The null-sectional asymptotic mean value}

Let the eigenvalues of $A\in\Sym(n)$ be ordered as
\[
\lambda_1(A)\le\cdots\le\lambda_n(A),
\]
and define
\[
\cP_q^-(A):=\lambda_1(A)+\cdots+\lambda_q(A).
\]
Ky Fan's minimum principle gives
\begin{equation}\label{eq:Ky-Fan}
\cP_q^-(A)=
\min_{E\in\Gr(q,n)}\tr(P_EA),
\end{equation}
where $P_E$ is orthogonal projection onto $E$.  For completeness, if
$e_1,\ldots,e_q$ is an orthonormal basis of $E$ and
$v_1,\ldots,v_n$ is an eigenbasis of $A$, then
\[
\tr(P_EA)=\sum_{i=1}^q\ip{Ae_i}{e_i}
=\sum_{j=1}^n\lambda_j(A)c_j,
\qquad
c_j:=\sum_{i=1}^q|\ip{e_i}{v_j}|^2,
\]
where $0\le c_j\le1$ and $\sum_jc_j=q$.  The last linear expression is minimized by taking $c_j=1$ for $j\le q$ and $c_j=0$ otherwise, and equality is attained for the span of the first $q$ eigenvectors.  In particular, the minimum value is Lipschitz even when its minimizing planes are not:
\begin{equation}\label{eq:Ky-Fan-Lipschitz}
\bigl|\cP_q^-(A)-\cP_q^-(B)\bigr|
\le q\|A-B\|_{\mathrm{op}}.
\end{equation}
Indeed, apply \eqref{eq:inf-stability-elementary} below to
$E\mapsto\tr(P_EA)$ and $E\mapsto\tr(P_EB)$, and use
$|\tr(P_E(A-B))|\le q\|A-B\|_{\mathrm{op}}$.

\begin{proposition}[Quantitative sectional mean-value expansion]\label{prop:mean-expansion}
Let $1\le q\le n$, let $\phi\in C^2(\Omega)$, and let $K\Subset\Omega$.  For
\[
0<r<d_K:=\dist(K,\partial\Omega),
\]
define
\begin{equation}\label{eq:hessian-modulus}
\omega_{K,\phi}(r):=
\sup\bigl\{\|D^2\phi(y)-D^2\phi(x)\|_{\mathrm{op}}:
 x\in K,\ y\in\Omega,\ |y-x|\le r\bigr\}.
\end{equation}
Then the following assertions hold.

\begin{enumerate}[label=\textup{(\roman*)},leftmargin=2.2em]
\item For every $x\in K$, the map
\[
E\longmapsto \fintavg_{B_r^E(x)}\phi\,d\cH^q
\]
is continuous on $\Gr(q,n)$, and hence its infimum is attained.

\item Whenever $u,v$ are bounded Borel functions on $B_r(x)$,
\begin{equation}\label{eq:mean-nonexpansive}
\left|
\inf_{E\in\Gr(q,n)}\fintavg_{B_r^E(x)}u
-
\inf_{E\in\Gr(q,n)}\fintavg_{B_r^E(x)}v
\right|
\le \|u-v\|_{L^\infty(B_r(x))}.
\end{equation}

\item Uniformly for $x\in K$,
\begin{equation}\label{eq:mean-expansion}
\left|
\inf_{E\in\Gr(q,n)}
\fintavg_{B_r^E(x)}\phi(y)\,d\cH^q(y)
-
\phi(x)-\frac{r^2}{2(q+2)}\cP_q^-(D^2\phi(x))
\right|
\le
\frac{q}{2(q+2)}\omega_{K,\phi}(r)r^2.
\end{equation}
In particular, the left-hand side is $o_K(r^2)$ as $r\downarrow0$.
\end{enumerate}
\end{proposition}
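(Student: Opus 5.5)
The plan is to treat the three assertions in order, parametrizing each affine section uniformly.

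For (i), fix an orthonormal frame: every $E\in\Gr(q,n)$ is the span of the columns of some $U\in\R^{n\times q}$ with $U^TU=I_q$. Then
\[
\fintavg_{B_r^E(x)}\phi\,d\cH^q=\fintavg_{B_r^q}\phi(x+U\xi)\,d\xi ,
\]
where $B_r^q\subset\R^q$ is the standard ball. The right-hand side does not depend on the choice of frame, since $B_r^q$ is rotation invariant. It is continuous in $U$ on the compact Stiefel manifold by uniform continuity of $\phi$ on $\ol B_r(x)\Subset\Omega$ and dominated convergence. It therefore descends to a continuous function on the quotient $\Gr(q,n)$, which is compact, so the infimum is attained.

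For (ii), I would first record the elementary fact, referred to in the paper as \eqref{eq:inf-stability-elementary}: if $f,g$ are real functions on a set with $|f-g|\le\eps$ pointwise, then $|\inf f-\inf g|\le\eps$. The proof is that $\inf f\le f(E)\le g(E)+\eps$ for every $E$; taking the infimum over $E$ and then exchanging the roles of $f$ and $g$ gives the claim. Applying this to $E\mapsto\fintavg_{B_r^E}u$ and $E\mapsto\fintavg_{B_r^E}v$, whose difference is at most $\|u-v\|_{L^\infty}$ because the average is a probability average, yields \eqref{eq:mean-nonexpansive}.

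One subtlety concerns the meaning of $L^\infty$ on $q$-dimensional slices. For bounded Borel functions I would read $\|u-v\|_{L^\infty(B_r(x))}$ as the supremum norm, which makes the bound immediate. If an essential supremum is intended, a null set could meet some slices, so I would state the lemma with $\sup$ and note this point.

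For (iii), Taylor's formula with integral remainder gives
\[
\phi(x+U\xi)=\phi(x)+D\phi(x)\cdot U\xi+\tfrac12\langle D^2\phi(x)U\xi,U\xi\rangle+R(\xi),
\qquad |R(\xi)|\le\tfrac12\omega_{K,\phi}(r)|\xi|^2 .
\]
The remainder bound uses $|x+tU\xi-x|\le r$ and the definition of $\omega_{K,\phi}$. Averaging over $B_r^q$ removes the linear term by symmetry. The quadratic term uses the moment identity
\[
\fintavg_{B_r^q}\xi_i\xi_j\,d\xi=\frac{r^2}{q+2}\delta_{ij},
\]
so it averages to $\frac{r^2}{2(q+2)}\tr(U^TD^2\phi(x)U)=\frac{r^2}{2(q+2)}\tr(P_ED^2\phi(x))$. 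The remainder averages to at most
\[
\tfrac12\omega_{K,\phi}(r)\fintavg|\xi|^2=\frac{q}{2(q+2)}\omega_{K,\phi}(r)r^2 .
\]
Hence, for every $E$, the average differs from
\[
G(E):=\phi(x)+\frac{r^2}{2(q+2)}\tr(P_ED^2\phi(x))
\]
by at most the claimed error. Applying the infimum-stability fact once more, and using Ky Fan's principle \eqref{eq:Ky-Fan} to evaluate $\inf_E G(E)=\phi(x)+\frac{r^2}{2(q+2)}\cP_q^-(D^2\phi(x))$, gives \eqref{eq:mean-expansion}. This holds uniformly in $x\in K$. The $o_K(r^2)$ conclusion follows because $\omega_{K,\phi}(r)\to0$ by uniform continuity of $D^2\phi$ on a compact neighborhood of $K$, for example by restricting to $r<d_K/2$.

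The main obstacle is bookkeeping rather than depth. The key observation is that the minimizing planes may jump, but each estimate above holds for every $E$ simultaneously, so nothing depends on selecting a minimizer. The remaining care is computing the second-moment constant $1/(q+2)$ correctly and keeping the $\frac12$ factors consistent so that the constant $\frac{q}{2(q+2)}$ comes out exactly.
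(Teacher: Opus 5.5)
Your proposal is correct and follows the paper's proof essentially step for step. For (i) you use the orthonormal-frame parametrization descending to $\Gr(q,n)$; for (ii) the elementary infimum-stability inequality; and for (iii) a plane-uniform Taylor expansion with the moments $\fint_{B_r^q}\xi_i\xi_j\,d\xi=\frac{r^2}{q+2}\delta_{ij}$, followed by Ky Fan's principle.

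Your caveat in (ii) is also well taken. For $q<n$ the slices are Lebesgue-null, so the estimate must be read with the pointwise supremum norm, which is what the paper's one-line argument implicitly uses. With an essential supremum it fails: for $u=0$ and $v=-\chi_{x+E_0}$ one has $\|u-v\|_{L^\infty}=0$, yet the two infima differ by $1$.
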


\begin{proof}
Choose an orthonormal frame $U:\R^q\to\R^n$ with image $E$.  Then
\begin{equation}\label{eq:frame-representation}
\fintavg_{B_r^E(x)}\phi\,d\cH^q
=
\fintavg_{B_r^q(0)}\phi(x+Uz)\,dz.
\end{equation}
The right-hand side depends continuously on $U$.  It is invariant under replacing $U$ by $UO$, $O\in O(q)$, and therefore descends to a continuous function on the Grassmannian.  Since $\Gr(q,n)$ is compact, the minimum is attained.  This proves (i).

For arbitrary families of real numbers $\{a_E\}$ and $\{b_E\}$ indexed by the same set,
\begin{equation}\label{eq:inf-stability-elementary}
\left|\inf_E a_E-\inf_E b_E\right|
\le \sup_E|a_E-b_E|.
\end{equation}
Applying this to the sectional averages of $u$ and $v$ proves (ii).  Notice that this estimate does not require a continuous choice of a minimizing plane.

For $x\in K$ and $|z|<r$, the integral form of Taylor's theorem gives
\begin{align}
\phi(x+z)
={}&\phi(x)+D\phi(x)\cdot z
+\frac12\ip{D^2\phi(x)z}{z}+R_x(z),
\label{eq:taylor-integral}\\
R_x(z)
={}&\int_0^1(1-t)
\ip{\bigl(D^2\phi(x+tz)-D^2\phi(x)\bigr)z}{z}\,dt.
\label{eq:taylor-remainder}
\end{align}
Thus
\begin{equation}\label{eq:uniform-taylor-remainder}
|R_x(z)|\le \frac12\omega_{K,\phi}(r)|z|^2
\end{equation}
for every $x\in K$, every $E\in\Gr(q,n)$, and every $z\in E$ with $|z|<r$.  The point is that the modulus in \eqref{eq:hessian-modulus} is taken in the ambient space and is independent of $E$.

The linear term has zero sectional average.  In an orthonormal basis of $E$,
\[
\fintavg_{B_r^E(0)}z_i z_j\,d\cH^q(z)
=\frac{r^2}{q+2}\delta_{ij},
\qquad
\fintavg_{B_r^E(0)}|z|^2\,d\cH^q(z)
=\frac{q}{q+2}r^2.
\]
Consequently, if
\[
A_E(x,r):=\fintavg_{B_r^E(x)}\phi\,d\cH^q,
\]
then
\begin{equation}\label{eq:uniform-plane-expansion}
A_E(x,r)
=
\phi(x)+\frac{r^2}{2(q+2)}\tr(P_ED^2\phi(x))+\mathcal R_E(x,r),
\end{equation}
where
\begin{equation}\label{eq:uniform-plane-error}
\sup_{x\in K,\,E\in\Gr(q,n)}|\mathcal R_E(x,r)|
\le
\frac{q}{2(q+2)}\omega_{K,\phi}(r)r^2.
\end{equation}
Applying \eqref{eq:inf-stability-elementary} to \eqref{eq:uniform-plane-expansion} and using Ky Fan's identity \eqref{eq:Ky-Fan} proves \eqref{eq:mean-expansion}.

Finally, $D^2\phi$ is uniformly continuous on a fixed compact neighborhood of $K$, so $\omega_{K,\phi}(r)\to0$.  No continuity of the minimizing eigenspace is used.  Indeed, even if the set of minimizers jumps at an eigenvalue crossing, both the minimum in \eqref{eq:Ky-Fan} and the error estimate \eqref{eq:uniform-plane-error} remain stable.
\end{proof}

\begin{remark}[Jumping minimizing planes]\label{rem:jumping-minimizers}
The map $u\mapsto\cM_{q,r}u(x)$ is nonlinear, but \eqref{eq:mean-nonexpansive} shows that it is $1$-Lipschitz for the uniform norm.  Degeneracy of $D^2\phi(x)$ may cause the set of minimizing planes in \eqref{eq:Ky-Fan} to be nonunique and may make a chosen minimizer discontinuous under perturbation.  This affects only the choice of an argmin.  The value of the infimum is protected by \eqref{eq:inf-stability-elementary}, while the Taylor error is bounded before optimization by \eqref{eq:uniform-plane-error}.  Hence no spectral gap and no continuous selection of minimizing eigenspaces is required.
\end{remark}

\begin{corollary}[Homogeneous convex $k$-Hessian equation]\label{cor:mean-homogeneous}
Let $q=n-k+1$ and let $\phi\in C^2$ be convex.  At every point $x$,
\begin{equation}\label{eq:sigmak-Pq-equivalence}
\sigma_k(D^2\phi(x))=0
\quad\Longleftrightarrow\quad
\cP_q^-(D^2\phi(x))=0.
\end{equation}
Consequently, at such a point,
\begin{equation}\label{eq:asymptotic-sectional-MVP}
\phi(x)=
\inf_{E\in\Gr(q,n)}
\fintavg_{B_r^E(x)}\phi\,d\cH^q+o(r^2).
\end{equation}
\end{corollary}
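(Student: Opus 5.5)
The plan is to reduce the equivalence to a statement about the ordered eigenvalues $0\le\lambda_1\le\cdots\le\lambda_n$ of the positive semidefinite matrix $A=D^2\phi(x)$, and then obtain the mean-value identity directly from \Cref{prop:mean-expansion}. Since $A\ge0$, every term in $\sigma_k(A)$ and every summand of $\cP_q^-(A)$ is nonnegative, so both sides of the equivalence become statements about how many eigenvalues vanish. The guiding observation is that $\sigma_k(A)$ vanishes exactly when fewer than $k$ eigenvalues are positive, while $\cP_q^-(A)$ vanishes exactly when the $q$ smallest eigenvalues are zero. With $q=n-k+1$ these two conditions are the same.

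For the forward direction, assume $\sigma_k(A)=0$. Because all eigenvalues are nonnegative, $\sigma_k(A)$ is a sum of nonnegative products, so each product of $k$ eigenvalues is zero. In particular $\lambda_{n-k+1}\cdots\lambda_n=0$, which forces $\lambda_{n-k+1}=\lambda_q=0$. Monotonicity of the ordering then gives $\lambda_1=\cdots=\lambda_q=0$, so $\cP_q^-(A)=0$. For the reverse direction, assume $\cP_q^-(A)=0$. Since this is a sum of nonnegative terms, $\lambda_1=\cdots=\lambda_q=0$, and at most $n-q=k-1$ eigenvalues are nonzero. Every $k$-fold product therefore contains a zero factor, and $\sigma_k(A)=0$.

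For the asymptotic formula, take $K=\{x\}$ (or any compact neighborhood of $x$) in \eqref{eq:mean-expansion}. The quadratic term $\frac{r^2}{2(q+2)}\cP_q^-(D^2\phi(x))$ vanishes by the equivalence just proved. The remaining error is at most $\frac{q}{2(q+2)}\omega_{K,\phi}(r)r^2$, which is $o(r^2)$ because $\omega_{K,\phi}(r)\to0$ by continuity of $D^2\phi$. This yields \eqref{eq:asymptotic-sectional-MVP}.

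There is no real obstacle in this argument. The one point that needs care is that convexity is what makes both quantities sums of nonnegative terms. Without $A\ge0$ the equivalence fails, so the convexity hypothesis should be invoked explicitly at each of the two directions.
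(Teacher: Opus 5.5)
Your proposal is correct and follows essentially the same route as the paper. The paper states the equivalence as ``for a positive semidefinite matrix, $\sigma_k=0$ iff the rank is at most $k-1$ iff there are at least $q$ zero eigenvalues,'' which your ordered-eigenvalue argument spells out, and it then applies \Cref{prop:mean-expansion} with the quadratic term vanishing, exactly as you do via \eqref{eq:mean-expansion}.
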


\begin{proof}
For a positive semidefinite matrix, $\sigma_k=0$ if and only if the rank is at most $k-1$, which is equivalent to having at least $n-k+1=q$ zero eigenvalues.  Apply \Cref{prop:mean-expansion}.
\end{proof}

\begin{remark}\label{rem:mean-not-regularizing}
The formula is an infinitesimal characterization of flat directions, not a regularization theorem for the homogeneous equation.  If $F\subset\R^n$ has dimension at most $k-1$ and $g:F\to\R$ is convex, then $u(x)=g(P_Fx)$ is affine on every affine $q$-plane parallel to a subspace of $F^\perp$.  Such a function can be singular on a large set.
\end{remark}

\subsection{A complementary Grassmannian identity}

The Grassmannian is the compact homogeneous space
$O(n)/(O(k)\times O(n-k))$ and therefore carries a unique normalized
$O(n)$-invariant probability measure, denoted by $\gamma_{k,n}$.  If
$V\in\Gr(k,n)$ and $U:\R^k\to\R^n$ is an orthonormal frame for $V$, set
\[
A|_V:=U^TAU.
\]

\begin{proposition}[Grassmannian average of sectional determinants]\label{prop:Grassmann}
For every $A\in\Sym(n)$,
\begin{equation}\label{eq:Grassmann-identity}
\sigma_k(A)=\binom nk
\int_{\Gr(k,n)}\det(A|_V)\,d\gamma_{k,n}(V).
\end{equation}
\end{proposition}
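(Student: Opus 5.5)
The plan is to use $O(n)$-invariance to reduce to diagonal $A$, and then evaluate the Grassmannian integral through the Cauchy--Binet expansion of $\det(U^TAU)$.

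\emph{Well-definedness and reduction.} The integrand $V\mapsto\det(A|_V)$ does not depend on the choice of orthonormal frame $U$. Indeed, replacing $U$ by $UO$ with $O\in O(k)$ gives $\det(O^TU^TAUO)=\det(U^TAU)$. The integrand is also continuous on $\Gr(k,n)$. For $g\in O(n)$ we have $(g^TAg)|_V=A|_{gV}$, so $\gamma_{k,n}$-invariance shows that the right-hand side of \eqref{eq:Grassmann-identity} is invariant under $A\mapsto g^TAg$. The left-hand side $\sigma_k(A)$ is invariant as well. We may therefore assume $A=\operatorname{diag}(\lambda_1,\ldots,\lambda_n)$.

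\emph{Cauchy--Binet.} Write $D=\operatorname{diag}(\lambda_i)$ and $U^TAU=(U^T)\,D\,U$. By Cauchy--Binet,
\[
\det(U^TDU)=\sum_{|I|=k}\Bigl(\prod_{i\in I}\lambda_i\Bigr)\det(U_I)^2,
\]
where $U_I$ is the $k\times k$ submatrix of rows $I$. Here $\det(U_I)^2$ is the squared Plücker coordinate of $V$, which equals $\det(P_V|_{E_I})$ for the coordinate $k$-plane $E_I$. This quantity is frame-independent, so it is a well-defined function $\pi_I(V)$ on $\Gr(k,n)$.

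\emph{Averaging the Plücker coordinates.} Permutation matrices lie in $O(n)$ and act transitively on the index sets $I$. Hence invariance gives that $\int\pi_I\,d\gamma_{k,n}=c$ is independent of $I$. Applying Cauchy--Binet to $U^TU=I_k$ gives $\sum_I\pi_I(V)=1$ for every $V$. Integrating, $\binom nk c=1$, so $c=\binom nk^{-1}$. Therefore
\[
\int\det(A|_V)\,d\gamma_{k,n}=\binom nk^{-1}\sum_I\prod_{i\in I}\lambda_i=\binom nk^{-1}\sigma_k(A),
\]
which is \eqref{eq:Grassmann-identity}.

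The only genuinely delicate points are checking that each $\pi_I$ is well defined on the Grassmannian and using the correct direction of the invariance of $\gamma_{k,n}$ under $V\mapsto gV$. Both are routine. The constant $\binom nk$ comes out of the normalization $\sum_I\pi_I\equiv1$, so no explicit computation of Haar integrals is needed.
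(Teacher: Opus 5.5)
Your proof is correct and follows essentially the same route as the paper. Both arguments reduce to diagonal $A$ by orthogonal invariance, expand $\det(U^TAU)=\sum_{|I|=k}\lambda_I\det(U_I)^2$ by Cauchy--Binet, use Haar symmetry to make the averages of $\det(U_I)^2$ independent of $I$, and normalize them via $\sum_I\det(U_I)^2=1$. Your additional checks (frame-independence, $(g^TAg)|_V=A|_{gV}$, and transitivity of permutation matrices on $k$-subsets) just make explicit what the paper's ``Haar symmetry'' step leaves implicit.
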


\begin{proof}
By orthogonal invariance, assume that $A=\operatorname{diag}(\lambda_1,\ldots,\lambda_n)$.  If $U$ is an $n\times k$ matrix with orthonormal columns spanning $V$, Cauchy--Binet gives
\[
\det(U^TAU)=
\sum_{|I|=k}\lambda_I\det(U_I)^2,
\qquad
\lambda_I:=\prod_{i\in I}\lambda_i.
\]
Haar symmetry makes the averages of $\det(U_I)^2$ independent of $I$.  A second application of Cauchy--Binet to $U^TU=I_k$ gives
\[
\sum_{|I|=k}\det(U_I)^2=1.
\]
Thus each of the $\binom nk$ equal averages is $\binom nk^{-1}$, and integration yields \eqref{eq:Grassmann-identity}.
\end{proof}

For convex $u$, \eqref{eq:Grassmann-identity} identifies the $k$-Hessian density with the Grassmannian average of the Monge--Amp\`ere densities of the restrictions $u|_{x+V}$.

\begin{corollary}[A positive proportion of nondegenerate sections]\label{cor:many-good-planes}
Let $A\ge0$, let $N=\binom nk$, and suppose $\sigma_k(A)>0$.  Then
\begin{equation}\label{eq:good-plane-measure}
\gamma_{k,n}\left(
\left\{V\in\Gr(k,n):
\det(A|_V)\ge\frac{\sigma_k(A)}{2N}
\right\}
\right)
\ge\frac{1}{2N-1}.
\end{equation}
\end{corollary}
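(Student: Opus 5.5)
The plan is a reverse Markov (Paley--Zygmund type) inequality applied to the function
\[
f(V):=\det(A|_V)
\]
on $\Gr(k,n)$ with the probability measure $\gamma_{k,n}$.

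First, since $A\ge0$, every restriction $A|_V=U^TAU$ is positive semidefinite, so $f\ge0$. By \Cref{prop:Grassmann} its mean is
\[
\int f\,d\gamma_{k,n}=\frac{\sigma_k(A)}{N}.
\]

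Second, we need a uniform upper bound for $f$. From the Cauchy--Binet expansion in the proof of \Cref{prop:Grassmann},
\[
f(V)=\sum_{|I|=k}\lambda_I\det(U_I)^2 .
\]
Each $\lambda_I\ge0$, and $\sum_I\det(U_I)^2=1$. Each $\lambda_I$ is one term of $\sigma_k(A)$ with all terms nonnegative, so $\lambda_I\le\sigma_k(A)$. Hence $f\le\max_I\lambda_I\le\sigma_k(A)$.

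Third, let $G:=\{V: f(V)\ge\sigma_k(A)/(2N)\}$ and $g:=\gamma_{k,n}(G)$. Split the mean over $G$ and its complement, using $f\le\sigma_k(A)$ on $G$ and $f<\sigma_k(A)/(2N)$ off $G$:
\[
\frac{\sigma_k(A)}{N}\le g\,\sigma_k(A)+(1-g)\frac{\sigma_k(A)}{2N}.
\]
Divide by $\sigma_k(A)>0$ and multiply by $2N$. This gives $2\le 2Ng+1-g$, that is, $g\ge 1/(2N-1)$, which is \eqref{eq:good-plane-measure}.

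There is no serious obstacle. The only point needing care is the upper bound $f\le\sigma_k(A)$, which relies on positive semidefiniteness (all $\lambda_I\ge0$) and on the convex-combination structure of the Cauchy--Binet expansion. One should also check that $G$ is measurable; this holds because $f$ is continuous on $\Gr(k,n)$, being invariant under $U\mapsto UO$ for $O\in O(k)$.
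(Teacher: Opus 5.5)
Your proposal is correct and follows essentially the same argument as the paper. You bound $0\le\det(A|_V)\le\sigma_k(A)$ via the Cauchy--Binet expansion, then split the Grassmannian average from \Cref{prop:Grassmann} over the good set and its complement and solve for its measure.
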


\begin{proof}
By the Cauchy--Binet representation in the proof of \Cref{prop:Grassmann}, $0\le\det(A|_V)\le\sigma_k(A)$.  If the set in \eqref{eq:good-plane-measure} has measure $\theta$, then \eqref{eq:Grassmann-identity} implies
\[
\frac{\sigma_k(A)}{N}
\le
(1-\theta)\frac{\sigma_k(A)}{2N}
+\theta\sigma_k(A).
\]
Solving for $\theta$ gives the claim.
\end{proof}

The identity and \Cref{cor:many-good-planes} are not used in the proof of \Cref{thm:main}; they clarify how the positive equation and the homogeneous flat-direction equation are seen on two complementary Grassmannians.

\subsection{From Newton-tensor flux to supporting contact geometry}\label{sec:mean-to-contact}

We now explain the route from the original mean-value question to the proof above.  The point is not that the sectional formula supplies a missing estimate in \Cref{sec:contact-regularity,sec:john,sec:section-decay}; it does not.  Rather, it singles out the correct degeneracy and suggests its nonsmooth finite-scale replacement.

For a smooth function $u$ in Euclidean space, let
\[
T_{k-1}^{ij}(D^2u):=
\frac{\partial\sigma_k}{\partial u_{ij}}(D^2u)
\]
be the $(k-1)$st Newton tensor.  Symmetry of third derivatives gives the standard identities
\begin{equation}\label{eq:newton-flux}
\partial_iT_{k-1}^{ij}(D^2u)=0,
\qquad
T_{k-1}^{ij}(D^2u)u_{ij}=k\sigma_k(D^2u),
\end{equation}
and hence
\begin{equation}\label{eq:newton-divergence}
k\sigma_k(D^2u)
=
\partial_i\bigl(T_{k-1}^{ij}(D^2u)u_j\bigr).
\end{equation}
For $k=1$, $T_0=I$, so \eqref{eq:newton-divergence} is the ordinary Laplace flux identity whose integration on balls differentiates the spherical mean.  For $k=2$, its flat form is the Newton-tensor identity underlying Reilly's Hessian formula; on a curved manifold, commutation of covariant derivatives produces curvature terms \cite{Reilly1977}.  For $k\ge2$, however, the flux weight $T_{k-1}(D^2u)$ depends on the unknown Hessian.  Thus \eqref{eq:newton-divergence} does not differentiate any fixed linear average of $u$ over Euclidean spheres.

This failure suggests replacing a fixed average by an adaptive one.  On the convex branch,
\[
\sigma_k(D^2u)=0
\quad\Longleftrightarrow\quad
\rank D^2u\le k-1
\quad\Longleftrightarrow\quad
\dim\ker D^2u\ge q=n-k+1.
\]
The second-order term of the average over a $q$-plane $E$ is
\[
\frac{r^2}{2(q+2)}\tr(P_ED^2u),
\]
and minimizing over $E$ produces $\cP_q^-(D^2u)$.  This is exactly the content of \Cref{prop:mean-expansion,cor:mean-homogeneous}.  In this sense, the null-sectional mean-value operator is the adaptive average naturally selected by the homogeneous convex equation.

Two obstructions prevent this observation from proving partial regularity directly.  First, it is a $C^2$ infinitesimal statement, whereas the points under investigation need not possess a Hessian.  Second, even for smooth convex solutions of $\sigma_k(D^2u)=1$, the equation gives no universal positive lower bound for $\cP_q^-(D^2u)$: the eigenvalues may be arbitrarily anisotropic while their $k$th elementary symmetric function remains fixed.  The homogeneous mean-value property itself is not regularizing, as \Cref{rem:mean-not-regularizing} shows.

The finite-scale replacement of a null eigenspace is a supporting contact set.  After subtracting a supporting affine function, an $m$-dimensional contact set records exact affine flatness in $m$ directions without assuming differentiability.  The threshold predicted by the mean-value calculation is precisely $m=q$:
\begin{itemize}[leftmargin=2.2em]
\item if one supporting contact set has dimension at most $q-1=n-k$, its orthogonal complement contains $k$ directions; the admissible saddle of \Cref{lem:saddle} and the Chou--Wang estimate then give regularity;
\item hence every supporting contact set at a singular point has at least $q$ directions;
\item the positive right-hand side, tested by a John ellipsoidal paraboloid, controls the product of the remaining smallest $k$ axes as in \eqref{eq:axis-product};
\item the $q$ contact directions keep the large axes from collapsing, which yields the section-volume deficit needed by Mooney's covering theorem.
\end{itemize}
Thus the conceptual passage is
\begin{equation}\label{eq:conceptual-passage}
\begin{gathered}
\text{null-sectional mean value}
\ \rightsquigarrow\
q\text{-dimensional supporting flatness}\\
\rightsquigarrow\
\text{John-axis volume decay}
\ \rightsquigarrow\
\text{Hausdorff control}.
\end{gathered}
\end{equation}
The first arrow is heuristic at a singular point, but the middle two arrows are the rigorous finite-scale mechanism of \Cref{sec:contact-regularity,sec:john,sec:section-decay}.  This also explains why the sectional formulas are best placed after the proof: they reveal the origin and possible future use of the geometry, while the theorem itself rests on supporting contacts, barriers, and convex sections.

The same viewpoint may become operational in a future blow-up or quantitative theory.  Height-normalized singular blow-ups can drive the right-hand side toward zero, so the optimized $q$-planes in \Cref{prop:mean-expansion} are natural candidates for approximate flat directions.  Turning that observation into packing estimates requires coherence of these planes across points and scales, precisely the missing issue described in \Cref{sec:quantitative-limitations}.

\section{Sharpness}\label{sec:sharpness}

The exponent in \Cref{thm:main} is optimal in the intermediate range.  We first record the elementary extension mechanism.

\begin{proposition}[Cylindrical extension]\label{prop:cylindrical-extension}
Let $1\le k\le n$, let $U\subset\R^k$ be open and convex, and let $w$ be a convex viscosity solution of
\[
\det D^2w=1\quad\text{in }U,
\]
and define
\[
u(x,z):=w(x),
\qquad
(x,z)\in U\times\R^{n-k}.
\]
Then $u$ is a convex viscosity solution of
\[
\sigma_k(D^2u)=1
\quad\text{in }U\times\R^{n-k}.
\]
\end{proposition}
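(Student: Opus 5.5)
We need to prove that $u(x,z)=w(x)$ is a viscosity solution of $\sigma_k(D^2u)=1$ in the admissible sense. The plan is to check the subsolution and supersolution inequalities separately, reducing each to the corresponding viscosity inequality for $w$ in $U$ by restricting test functions to the slice $z=z_0$. Convexity of $u$ is immediate from convexity of $w$.

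\emph{Subsolution.} Let $\phi\in C^2$ touch $u$ from above at $(x_0,z_0)$. By the remark after the viscosity convention, arbitrary $C^2$ upper tests are allowed. Since $u$ does not depend on $z$, the function $z\mapsto\phi(x_0,z)$ has a local minimum at $z_0$, so $D_z^2\phi(x_0,z_0)\ge0$. The slice $\psi(x):=\phi(x,z_0)$ touches $w$ from above at $x_0$, so the Monge--Amp\`ere subsolution property gives $\det D^2\psi(x_0)\ge1$. If $D^2\psi(x_0)$ is not positive definite, this has to be handled by the standard convention that subsolution tests force convexity: the convex function $w$ admits only upper tests with $D^2\psi\ge0$, and then $\det\ge1$ forces positive definiteness.

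Write $D^2\phi=\begin{pmatrix}P&B\\B^T&S\end{pmatrix}$ with $P=D^2\psi(x_0)>0$ and $S\ge0$. We need $\sigma_k(D^2\phi)\ge1$ together with admissibility. Here we do not try to prove that the full matrix lies in $\Gamma_k$. Instead we compare with the simpler test $\tilde\phi(x,z)=\psi(x)$, which also touches from above: $\tilde\phi(x,z)=\phi(x,z_0)\ge w(x)=u(x,z)$. Since $D^2\tilde\phi=\operatorname{diag}(P,0)$, we get $\sigma_k(D^2\tilde\phi)=\det P\ge1$, and this matrix lies in the closure of $\Gamma_k$, in $\Gamma_k$ because $P>0$ has $k$ positive eigenvalues. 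The subsolution definition only requires the inequality for tests, but the general test $\phi$ has Hessian differing by the off-diagonal and $S$ parts. The cleaner route is therefore to use Mooney's equivalence in reverse: verify the subsolution property via the characterization that $u$ is a subsolution iff it is so for quadratic polynomial tests touching strictly. For such quadratics one can monotonically modify $\phi$ in the $z$ variables. I expect this reduction to be the main technical obstacle, and I would first try the Schur-complement/interlacing bound $\sigma_k(D^2\phi)\ge\det P$ for $D^2\phi\ge0$ in this position.

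\emph{Supersolution.} Let $\phi$ be $k$-admissible and touch $u$ from below at $(x_0,z_0)$. Then $z\mapsto\phi(x_0,z)$ has a local maximum, so $S\le0$, and $\psi=\phi(\cdot,z_0)$ touches $w$ from below. Since $D^2\phi\in\Gamma_k$ and $S\le0$, every $k\times k$ principal minor involving $z$-directions is controlled. The natural inequality to aim for is $\sigma_k(D^2\phi)\le\det P$ whenever $D^2\phi\in\Gamma_k$ and $S\le0$, which I would prove by induction on the number of $z$-directions: removing a row/column with nonpositive diagonal does not decrease $\sigma_k$ in the cone. If $P>0$, the Monge--Amp\`ere supersolution property gives $\det P\le1$. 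If $P\not>0$, then $\det P\le0$ can still be at most $1$ only if $w$'s supersolution convention applies vacuously, so we need $\sigma_k(D^2\phi)\le1$ directly from the inequality above. Hence $\sigma_k(D^2\phi)\le1$.

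The main obstacle is the two matrix inequalities comparing $\sigma_k$ of the full Hessian with $\det$ of the $x$-block under the sign conditions $S\ge0$ and $S\le0$ respectively. The model case is the block-diagonal one, where $\sigma_k(\operatorname{diag}(P,S))=\sum_j\sigma_{k-j}(P)\sigma_j(S)$.
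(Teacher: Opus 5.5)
Your route is genuinely different from the paper's, which never handles a test function for $u$. The paper approximates $w$ on balls by smooth convex solutions $w_j$ of $\det D^2w_j=1$ (Caffarelli--Nirenberg--Spruck in dimension $k$). It observes that $u_j(x,z)=w_j(x)$ has $D^2u_j=\operatorname{diag}(D^2w_j,0)\in\Gamma_k$ and $\sigma_k(D^2u_j)=1$, and then passes to the limit by viscosity stability. Your slicing argument can be completed, and it is more self-contained: it needs no Dirichlet solvability, comparison or stability, only G\aa rding-cone algebra. As written, however, it is not yet a proof. The two matrix inequalities you call the main obstacle are left unproved, and two statements need correcting.

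\emph{Subsolution.} The hypothesis you need is not $S\ge0$ but $M:=D^2\phi(x_0,z_0)\ge0$. You write $D^2\phi\ge0$ once but never justify it, and your closing summary states the condition as $S\ge0$. It holds because $u$ is convex: second differences of $\phi$ at an upper contact point dominate those of $u$, which are nonnegative. With only $S\ge0$ the inequality can fail; for $n=3$, $k=2$ one has $\sigma_2(M)=\det P+s\,\tr P-|b|^2$. Once $M\ge0$, every principal minor of $M$ is nonnegative, so
\[
\sigma_k(M)=\sum_{|J|=k}\det M_{JJ}\ge\det P .
\]
The detour through $\tilde\phi$ and quadratic tests is therefore unnecessary. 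To get $\det P\ge1$, test $w$ with $\psi+\eps|x-x_0|^2$, which is admissible because $P\ge0$, and let $\eps\to0$.

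\emph{Supersolution.} Your induction step rests on the bordered expansion
\[
\sigma_k\begin{pmatrix}A&b\\ b^{T}&c\end{pmatrix}=\sigma_k(A)+c\,\sigma_{k-1}(A)-b^{T}T_{k-2}(A)\,b,
\qquad T_{k-2}(A)^{ij}=\frac{\partial\sigma_{k-1}}{\partial a_{ij}}(A).
\]
You also need the fact that deleting a row and column of $M\in\Gamma_k$ yields $A\in\Gamma_{k-1}$. Indeed, $\sigma_j(A)=\partial\sigma_{j+1}(M)/\partial m_{nn}>0$ for $j<k$, and on $\Gamma_{k-1}$ both $\sigma_{k-1}>0$ and $T_{k-2}>0$. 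Hence $c\le0$ gives $\sigma_k(A)\ge\sigma_k(M)>0$, so $A\in\Gamma_k$ and the induction continues. Only the diagonal entries of $S$ need to be nonpositive.

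At the last step $P\in\Gamma_k$ in $\Sym(k)$, i.e.\ $P>0$. This is what disposes of your case $P\not>0$, and your treatment of that case is wrong as written. A non-positive-definite $P$ can have $\det P>1$ (two negative eigenvalues), and then $\sigma_k(M)\le\det P$ gives nothing. The case simply cannot occur. So $\psi$ is an admissible lower test for $w$ near $x_0$, and $\sigma_k(M)\le\det P\le1$.
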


\begin{proof}
It suffices to argue locally.  On a ball compactly contained in $U$, approximate $w$ uniformly by smooth convex solutions $w_j$ of $\det D^2w_j=1$ using \Cref{thm:CNS} in dimension $k$.  Set $u_j(x,z)=w_j(x)$.  Then
\[
D^2u_j=
\begin{pmatrix}
D^2w_j&0\\
0&0
\end{pmatrix}
\in\Gamma_k,
\qquad
\sigma_k(D^2u_j)=\det D^2w_j=1.
\]
The functions $u_j$ converge locally uniformly to $u$, and viscosity stability gives the conclusion.
\end{proof}

Mooney constructed, in every dimension $k\ge3$, convex Alexandrov solutions of $\det D^2w=1$ containing compact subsets of the non-strictly-convex set with Hausdorff dimension arbitrarily close to $k-1$ \cite[Section~4, especially Remark~4.2]{Mooney2015}.  For continuous positive densities, convex Alexandrov and viscosity solutions of the Monge--Amp\`ere equation coincide; see \cite[Chapter~1]{Gutierrez2016}.  Thus these examples belong to the viscosity class used here.  Moreover, every point of the non-strictly-convex set is in $\Sing(w)$: if $w$ were $C^2$ in a neighborhood of such a point, then $D^2w\ge0$ and $\det D^2w=1$ would make $D^2w$ positive definite there; by continuity it would remain uniformly positive definite on a smaller ball, forcing strict convexity.

If $w$ is not $C^2$ near $x$, then the cylindrical extension cannot be $C^2$ near any $(x,z)$, because restriction to the affine slice $\R^k\times\{z\}$ would make $w$ locally $C^2$.  Hence
\[
\Sing(w)\times\R^{n-k}
\subset\Sing(u).
\]
For completeness, the required product estimate follows directly from Frostman's lemma.  If $s<\dim_{\cH}K$, choose a finite measure $\mu$ supported on $K$ with $\mu(B_r)\le C_sr^s$.  Lebesgue measure $\nu$ on $[0,1]^{n-k}$ satisfies $\nu(B_r)\le C_nr^{n-k}$.  Therefore the product measure obeys
\[
(\mu\times\nu)(B_r(x,z))\le C r^{s+n-k},
\]
so another application of Frostman's lemma yields
\[
\dim_{\cH}\bigl(K\times[0,1]^{n-k}\bigr)\ge s+n-k.
\]
Letting $s\uparrow\dim_{\cH}K$ gives the standard product lower bound; see also \cite[Chapter~8]{Mattila1995}.  Consequently, for every $\delta>0$ and every $3\le k<n$, there are convex viscosity solutions of $\sigma_k(D^2u)=1$ whose singular sets have Hausdorff dimension at least $n-1-\delta$.  Thus no estimate of the form
\[
\dim_{\cH}\Sing(u)\le n-1-\delta(n,k)
\]
can hold uniformly.

\section{Further directions}\label{sec:further}

\subsection{Variable right-hand sides}\label{sec:variable-rhs}

The geometric portion of the argument is insensitive to roughness of the right-hand side: \Cref{thm:stratification,lem:axis-product,prop:section-decay} use only the viscosity lower bound
\[
\sigma_k(D^2u)\ge\lambda>0.
\]
For an equation
\begin{equation}\label{eq:variable-rhs}
\sigma_k(D^2u)=f(x),
\qquad
0<\lambda\le f\le\Lambda,
\end{equation}
the obstruction enters only when one tries to prove the support-dependent regularity criterion, and it enters at two logically separate stages.

\smallskip
\noindent\emph{Approximation and stability.}
For constant right-hand side, \Cref{lem:smooth-approximation} uses smooth Dirichlet solutions of the same equation, direct comparison with the limiting viscosity solution, and a gradient estimate whose constants are uniform along the approximation.  If $f$ is continuous, one can choose smooth positive $f_j\to f$ uniformly, but the approximating equations now vary with $j$.  The constant-right-hand-side Dirichlet theorem quoted in \Cref{thm:CNS} does not by itself furnish the required classical solutions for the pairs $(f_j,g_j)$: one must separately verify the admissible subsolution and boundary-barrier hypotheses in the variable-data and degenerate Dirichlet theory, with estimates uniform in $j$; see, for example, \cite{IvochkinaTrudingerWang2005}.  Even when these classical solutions exist, viscosity stability and uniqueness identify a locally uniform subsequential limit only after one has obtained uniform compactness estimates for them.  If $f$ is merely discontinuous or belongs only to $L^p$, no sequence of smooth functions converges to $f$ uniformly in general.  The standard local-uniform stability theorem for viscosity solutions then sees only upper and lower relaxed limits of the right-hand sides and does not, by itself, select a prescribed solution of \eqref{eq:variable-rhs}; see, for example, \cite{CrandallIshiiLions1992}.  One would need an $L^p$-viscosity or Hessian-measure formulation together with a matching comparison and approximation theorem.  Thus the direct analogue of \Cref{lem:smooth-approximation} is already nonautomatic at the discontinuous level.

\smallskip
\noindent\emph{Loss of uniform a priori estimates.}
The decisive obstruction is the variable-right-hand-side Pogorelov estimate.  Chou--Wang's estimate assumes $C^{1,1}$ control of the right-hand side and its constant depends on that norm and on a positive lower bound; see \cite[Theorem~4.1]{ChouWang2001} and \cite[Theorem~4.3]{Wang2009}.  Their proof differentiates the equation twice.  Written for $F=\sigma_k^{1/k}$, it contains a term involving $D^2(f^{1/k})$.  If $f_\eps$ is a standard mollification of a positive $f\in C^{0,\alpha}$, then
\begin{equation}\label{eq:rhs-mollification-loss}
\|Df_\eps\|_{L^\infty}
\lesssim \eps^{\alpha-1}[f]_{C^{0,\alpha}},
\qquad
\|D^2f_\eps\|_{L^\infty}
\lesssim \eps^{\alpha-2}[f]_{C^{0,\alpha}}.
\end{equation}
Because $f_\eps\ge\lambda/2$ after a harmless adjustment, the chain rule gives the same leading loss for $D^2(f_\eps^{1/k})$.  Thus, for Lipschitz $f$, the first derivatives can remain uniformly bounded but the second derivatives typically grow like $\eps^{-1}$; for $0<\alpha<1$, even the first-derivative bound generally diverges.  The Chou--Wang constants therefore blow up along the smooth approximation, so the uniform Hessian bound \eqref{eq:uniform-Hessian-bound} cannot be passed to the limit.  Convexity of the limiting solution does not repair this step, since the Caffarelli--Nirenberg--Spruck approximants for $k<n$ are $k$-admissible but need not be convex.

The fixed saddle quadratic is not the source of the failure.  For a smooth positive $f_j$, concavity and homogeneity still imply $L_{u_j}w>0$ for the same barrier.  What is missing is a gradient and Hessian estimate uniform in $j$.  A positive $C^{1,1}$ right-hand side is compatible with the known Chou--Wang estimate if the approximation preserves a uniform $C^{1,1}$ bound, and the present method should extend to that regime after a separate variable-right-hand-side approximation lemma is supplied.  We do not claim that extension here.  For Hölder, merely continuous, or $L^p$ right-hand sides, one needs either a Pogorelov estimate that avoids differentiating $f$ twice or a different regularity mechanism.  The high-contact portion of the singular set, however, is already controlled by \Cref{thm:stratification} under the sole lower bound in \eqref{eq:variable-rhs}.

\subsection{Quantitative stratification}\label{sec:quantitative-limitations}

The mean-value side is quantitatively stable: \eqref{eq:mean-expansion} gives an explicit modulus-of-continuity remainder, and \eqref{eq:section-decay-quantitative} gives a point-independent section-volume estimate once a contact inradius $\rho$ is prescribed.  The unresolved issue is to produce such a normalized inradius from a finite-scale failure of regularity.  The qualitative condition $d_u(x,p)\ge q$ says only that the exact limiting contact set contains some nondegenerate $q$-simplex.  Its inradius may tend to zero arbitrarily rapidly as $(x,p)$ varies, and exact contact may disappear under perturbation, leaving only an approximate contact set.  In addition, \eqref{eq:axis-product} controls only the product of the smallest $k$ John axes.  It gives no uniform lower bound for any prescribed individual axis and no coherence of the associated principal planes between neighboring points or consecutive scales.

A quantitative theory would require a normalized dichotomy with universal parameters.  Schematically, on each ball and at each scale one would need either an effective Hessian bound on a smaller concentric ball, or a $q$-dimensional approximate contact simplex with a fixed normalized inradius and a controlled height error.  To convert this into a packing or tubular-neighborhood estimate, one would also need a cone-splitting or Reifenberg-type statement forcing the approximate flat directions to be coherent across nearby bad points and adjacent scales, in the spirit of quantitative stratification methods such as \cite{CheegerNaber2013}.  Neither assertion follows from the present exact-contact argument.  Thus \eqref{eq:section-decay-quantitative} is a useful weak quantitative substitute, but the current paper deliberately claims Hausdorff-nullity rather than a Minkowski-content bound.

\subsection{Higher integrability beyond \texorpdfstring{$W^{2,1}$}{W2,1}}\label{sec:higher-integrability}

\Cref{cor:W21-intro} removes the singular part of the distributional Hessian, but it is an endpoint qualitative statement: the proof does not bound the distribution function of the absolutely continuous density.  This is substantially weaker than the Monge--Amp\`ere higher-integrability theory.  In that setting, De Philippis--Figalli obtain $L\log^mL$ control for every finite $m$, and De Philippis--Figalli--Savin and Schmidt obtain an $L^{1+\varepsilon}$ gain \cite{DePhilippisFigalli2013,DePhilippisFigalliSavin2013,Schmidt2013}.  Their arguments use normalized sections and the volume-preserving affine covariance of the determinant, together with the localization geometry initiated by Caffarelli \cite{Caffarelli1990Localization,Caffarelli1990W2p}.

For $k<n$, a general determinant-one affine change does not preserve $\sigma_k(D^2u)$.  The present proof retains only an anisotropic product estimate for the smallest $k$ John axes, namely \eqref{eq:axis-product}; it gives neither a canonical normalization of all sections nor a scale-by-scale distributional estimate for $|D^2u|$.  A plausible route to $L\log L$, weak-$L^{1+\varepsilon}$, or $W^{2,1+\varepsilon}$ bounds would therefore require genuinely quantitative versions of both parts of the proof: a finite-scale regularity/contact dichotomy and a packing estimate that is coherent across neighboring points and adjacent scales.  The quantitative stratification problem described above is thus not only a refinement of the Hausdorff theorem, but also the natural obstruction to stronger Sobolev integrability.

\subsection{A measure-level sectional formula}

For smooth convex functions, \eqref{eq:Grassmann-identity} can be integrated and disintegrated over affine $k$-planes.  A measure-level version for Hessian measures should relate the $k$-Hessian measure to averaged Monge--Amp\`ere measures of restrictions.  The weak continuity theory of Hessian measures in \cite{TrudingerWang1999} provides a natural approximation framework.  Such a formula is not needed for the present proof, but it may be useful for quantitative slicing arguments.

\subsection{Removable and isolated singularities}

The singular set studied here should be distinguished from a removable
singularity set: throughout the paper, the viscosity solution is already
defined on all of $\Omega$, and the issue is failure of local $C^2$
regularity.  For punctured-domain problems, Labutin obtained sharp
removability criteria and a classification of isolated singularities for
fully nonlinear uniformly elliptic equations
\cite{Labutin2000,Labutin2001}; his nonlinear potential theory for
$k$-subharmonic functions gives capacity and Hausdorff-measure criteria that
are specific to the $k$-Hessian operator \cite{Labutin2002}.

For the homogeneous $k$-Hessian equation, exterior and punctured Dirichlet
problems with fundamental asymptotics have been studied by Ma--Zhang and
Gao--Ma--Zhang \cite{MaZhang2022,GaoMaZhang2023}.  In the Monge--Amp\`ere
setting, Jian--Tu's strong maximum principle identifies strict convexity as
the natural locus for comparison of generalized solutions
\cite{JianTu2025}, while recent sharp global Alexandrov estimates of
Jin--Tu--Xiong yield rigidity and applications to entire solutions with
isolated singularities \cite{JinTuXiong2026}.  It would be interesting to
understand whether the supporting-contact stratification developed here can
interact with these punctured-domain and capacity theories, especially for
the homogeneous blow-up equation $\sigma_k(D^2u)=0$.

\clearpage
\appendix

\section{The higher-codimension section-covering criterion}
\label{app:higher-codimension-covering}

The case $s=1$ of the criterion below is Mooney's section-covering theorem
\cite[Theorem~3.1]{Mooney2015}.  The higher-codimension statement is recorded
in \cite[Remark~3.4]{Mooney2015} without the details of the argument.  We give
a complete proof, following the same two mechanisms as in the codimension-one
case: degeneration of John axes and concentration of Monge--Amp\`ere measure.
For a convex function $v$, we write
\[
  M_v(A):=|\partial v(A)|
\]
for its Alexandrov Monge--Amp\`ere measure.

\begin{theorem}[Higher-codimension section covering]
\label{thm:appendix-covering}
Let $v$ be convex in $B_1\subset\R^n$, let $1\le s\le n-1$ be an integer,
and let $E\subset B_1$.  Assume that for every $x\in E$ and every
$p\in\partial v(x)$ there exist constants $C_{x,p}<\infty$ and
$h_{x,p}>0$ such that
\begin{equation}\label{eq:appendix-section-hypothesis}
  |S^v_{h,p}(x)|\le C_{x,p}h^{(n+s)/2}
  \qquad (0<h<h_{x,p}).
\end{equation}
Then
\[
  \cH^{n-s}(E)=0.
\]
\end{theorem}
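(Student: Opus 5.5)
We will follow Mooney's proof of the codimension-one case: split $E$ into countably many uniform pieces, then show that on each piece either a John axis collapses or the Monge--Amp\`ere measure concentrates too strongly to be finite.

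\emph{Reduction to uniform pieces.} Since $v$ is convex on $B_1$, it is locally Lipschitz and $\partial v$ is locally bounded. We first treat $E\cap B_{1-\delta}$ and let $\delta\downarrow0$. For integers $j$, let $E_j$ be the set of $x\in E\cap B_{1-\delta}$ such that \eqref{eq:appendix-section-hypothesis} holds for \emph{every} $p\in\partial v(x)$ with $C_{x,p}\le j$ and $h_{x,p}\ge1/j$. The difficulty is that the constants may depend on $p$, while $\partial v(x)$ can be uncountable. To handle this we use the upper semicontinuity of $x\mapsto\partial v(x)$ and the compactness of each $\partial v(x)$. If $p_i\to p$ and $h<h_{x,p}$, then $S^v_{h,p_i}(x)\subset S^v_{h+\eta,p}(x)$ for $i$ large, with $\eta$ small. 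This lets us pass from a pointwise bound at $p$ to a uniform bound on a neighborhood of $p$, at the cost of halving $h$ and enlarging the constant. A finite subcover of $\partial v(x)$ then gives a single pair of constants for each $x$, so $E=\bigcup_j E_j$ up to the $\delta$-exhaustion. It therefore suffices to prove $\cH^{n-s}(E_j)=0$ for each fixed $j$.

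\emph{The key estimate on a uniform piece.} Fix $j$ and $x\in E_j$. For small $h$, normalize the section $S_h:=S^v_{h,p}(x)$ by its John ellipsoid, with semiaxes $a_1\ge\dots\ge a_n$. A standard Alexandrov / comparison argument on normalized sections gives
\[
M_v(S_h)\gtrsim h^n/|S_h|^2\gtrsim j^{-2}h^{-s},
\]
and this blows up. We then build a Vitali cover with balls adapted to the sections. Each section has volume $\lesssim h^{(n+s)/2}$ and diameter $\le$ some $a_1\lesssim\operatorname{Lip}$-controlled quantity. Covering each $S_h$ by balls of radius $a_n$-type scale, the condition $\prod a_i\le jh^{(n+s)/2}$ together with $a_i\gtrsim h/\operatorname{Lip}$ (the section contains a ball of radius comparable to $h$ divided by the Lipschitz constant) lets us cover $E_j\cap S_h$ by balls $B_{r_\alpha}$ with
\[
\sum r_\alpha^{n-s}\lesssim \text{(a bound involving } M_v(\text{engulfing sections}))\cdot o(1).
\]
Concretely, we intend to cover $S_h$ by about $\prod_i a_i/r^n$ balls of radius $r\sim h^{1/2}$. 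Writing this with the volume bound gives $(\#)\,r^{n-s}\lesssim jh^{(n+s)/2}\,h^{-n/2}h^{(n-s)/2}=jh^{n/2}$, which must be compared with the measure lower bound $M_v(S_h)\gtrsim h^{-s}$. Using the engulfing property of sections of convex functions (with constants depending on $j$), we choose a disjoint Vitali subfamily of sections $S_{h_\alpha}(x_\alpha)$ with $h_\alpha<\epsilon$, whose dilates cover $E_j$. Summing, $\sum_\alpha(\text{cover cost})\lesssim\epsilon^{\theta}\sum_\alpha M_v(S_{h_\alpha})\le\epsilon^\theta M_v(B_1')<\infty$ for some $\theta>0$, and letting $\epsilon\to0$ gives $\cH^{n-s}(E_j)=0$.

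\emph{Main obstacle.} The hard step is making the exponent bookkeeping close. The covering cost of an anisotropic section in $\cH^{n-s}$ is governed by $\prod_{i\le n-s}a_i$ at the scale $a_{n-s+1}$, not simply by the volume. One must show that the volume bound $h^{(n+s)/2}$, together with the a priori lower bounds $a_i\gtrsim h$, forces this cost to be $o(1)\cdot M_v(S_h)$. I would try first to cover $S_h$ at scale $r=a_{n-s+1}$, giving cost $\prod_{i\le n-s}a_i$, and then bound this via $|S_h|/\prod_{i>n-s}a_i$ using the largest-to-smallest axis ordering. If that fails, I would fall back on Mooney's original dichotomy, in which either the section is thin in $s$ directions (cheap cover) or it carries concentrated measure. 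Running the dichotomy on $s$ axes at once, rather than one, is where the genuinely new verification lies.
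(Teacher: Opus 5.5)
\textbf{There is a genuine gap, and it sits at your key estimate.}

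The Alexandrov maximum estimate applied to $v-\ell_{x,p}-h$ on $S_h$ gives $M_v(S_h)\,|S_h|\ge c_nh^n$, not $h^n/|S_h|^2$. Combined with $|S_h|\le jh^{(n+s)/2}$, it yields only $M_v(S_h)\ge c\,j^{-1}h^{(n-s)/2}$. This cannot blow up, since $M_v$ is locally finite and, after adding $\frac12|x|^2$, $S_h\subset B_{\sqrt{2h}}(x)$. It is merely comparable to $(\sqrt h)^{n-s}$, the $\cH^{n-s}$-cost of the ball containing the section.

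So the hypothesis at a single slope gives only a positive lower $(n-s)$-density of $M_v$, and no bookkeeping of axes can close. An ellipsoid with $n-s$ semiaxes $\sim\sqrt h$ and $s$ semiaxes $\sim h$ has exactly the critical volume $h^{(n+s)/2}$. Your count $\prod_i a_i/r^n$ also ignores axes shorter than $r$; the correct count is $\prod_i\max\{a_i,r\}/r^n\ge1$.

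The auxiliary steps are unsound as well:
\begin{enumerate}[label=\textup{(\alph*)},leftmargin=2.2em]
\item The inclusion $S^v_{h,p'}(x)\subset S^v_{h+2|p'-p|,p}(x)$ yields a bound $\lesssim h^{(n+s)/2}$ only when $|p'-p|\lesssim h$. So the constants cannot be made uniform on a neighborhood of $p$ for all small $h$, and nothing prevents them from degenerating as $p$ approaches the relative boundary of $\partial v(x)$.
\item Engulfing of sections is not available for a general convex function; it is a consequence of doubling of the Monge--Amp\`ere measure on sections. So a Vitali family of sections is not justified.
\end{enumerate}

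\textbf{The missing idea, and the heart of the paper's proof,} is that in this critical configuration the hypothesis at \emph{another} slope forces one more John axis to collapse. This, not uniformization, is where ``every $p\in\partial v(x)$'' is used.

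After adding $\frac12|x|^2$, suppose $d_{n-s}(h_j)\ge\delta\sqrt{h_j}$. The product bound and the Lipschitz lower bound $d_n(h)\gtrsim h$ force all $s$ short axes to be $\sim h_j$. The section then lies in a strip of width $O(h)$ around a limiting direction $e$, and the outward normals at the two edges of the strip give $a^+e,\,-a^-e\in\partial v(x)$ with $a^\pm>0$. Tilt to the extremal slope $ae$, where $a=\max\{t:te\in\partial v(x)\}$. The sections $S^v_{\Lambda h_j,ae}(x)$ contain the critical-volume John ellipsoid, which has width $O(h_j)$ along $e$. They also contain a point at distance $\gtrsim R_jh_j$ along $e$, with $R_j\to\infty$. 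Hence their volume is $\gtrsim R_jh_j^{(n+s)/2}$, contradicting the hypothesis at the slope $ae$. Thus $d_{n-s}(h)=o(\sqrt h)$.

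Next, let $I\le n-s$ be the first index with $\liminf_{h\downarrow0}d_I(h)/\sqrt h=0$. Restrict $v$ to the span of the last $n-I+1$ principal directions and apply the Alexandrov estimate on that slice. Then lift the slice's Monge--Amp\`ere mass back to $\R^n$ using strong convexity: each slice subgradient carries an $(I-1)$-ball of radius $\sim r$ of ambient subgradients. This gives $M_v(B_r(x))\ge c\,\delta^{-(n-s-I+1)}r^{n-s}$ at $r\sim d_I(h)$.

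So the locally finite measure $M_v$ has infinite upper $(n-s)$-density at every point of $E$. An ordinary Vitali covering by Euclidean balls then gives $\cH^{n-s}(E)=0$, with no uniformity in $x$ or $p$ required.
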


We first record two elementary convex facts used in the proof.

\begin{lemma}[Sublevel normals and convex-hull volume]
\label{lem:appendix-convex-facts}
Let $f$ be finite and convex on a neighborhood of the compact convex body
$K=\{f\le h\}$.  If $y\in\partial K$ and $\nu$ is an outward unit normal
to $K$ at $y$, then
\begin{equation}\label{eq:normal-subgradient}
  a\nu\in\partial f(y)
\end{equation}
for some $a>0$.

Moreover, if $\mathcal E=z+AB_1$ is an ellipsoid and
$|A^{-1}(y-z)|\ge2$, then
\begin{equation}\label{eq:convex-hull-volume}
 |\operatorname{co}(\mathcal E\cup\{y\})|
 \ge c_n |A^{-1}(y-z)|\,|\mathcal E|.
\end{equation}
\end{lemma}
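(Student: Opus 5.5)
The plan is to prove the two assertions separately: the first by a separation argument using the max-formula for directional derivatives, the second by an affine normalization that reduces it to a ball. One point must be flagged first. The normal-subgradient claim can fail if $h=\min f$; for instance $f=h+\dist(\cdot,K)^2$ has $\partial f(y)=\{0\}$ on $\partial K$. I will therefore assume, as holds in every application (where $K$ is a section of height $h>0$ above a supporting plane), that $\inf f<h$.

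\emph{Normals.} Let $y\in\partial K$. Then $f(y)=h$, since $f(y)<h$ together with continuity would put a neighborhood of $y$ inside $K$. Because $\inf f<h=f(y)$, the point $y$ is not a minimizer, so $0\notin\partial f(y)$. The set $\partial f(y)$ is nonempty, convex and compact, so the cone $\Gamma:=\{a p:a\ge0,\ p\in\partial f(y)\}$ is a closed convex cone. Suppose, to get a contradiction, that $\nu\notin\Gamma$. Separating $\nu$ from $\Gamma$ gives a vector $w$ with $w\cdot\nu>0$ and $p\cdot w<0$ for every $p\in\partial f(y)$. By the max-formula and compactness of $\partial f(y)$,
\[
f'(y;w)=\max_{p\in\partial f(y)}p\cdot w<0 .
\]
Hence $f(y+tw)<h$ for small $t>0$, so $y+tw\in K$. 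But $\nu\cdot\bigl((y+tw)-y\bigr)=t\,w\cdot\nu>0$, which contradicts $\nu$ being an outward normal to $K$ at $y$. Therefore $\nu=a p$ for some $p\in\partial f(y)$ and $a>0$; here $a\neq0$ because $\nu\neq0$. Rescaling gives a positive multiple of $\nu$ in $\partial f(y)$, which is \eqref{eq:normal-subgradient}.

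\emph{Convex-hull volume.} Apply the affine map $T(x)=A^{-1}(x-z)$. It maps $\mathcal E$ onto $B_1$, maps convex hulls to convex hulls, and multiplies all volumes by the same factor $\det A^{-1}$, so both sides of \eqref{eq:convex-hull-volume} scale identically. It therefore suffices to treat $\mathcal E=B_1$ and a point $y'$ with $t:=|y'|\ge2$.

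Let $e=y'/t$. The convex hull $\operatorname{co}(B_1\cup\{y'\})$ contains the cone with apex $y'$ over the $(n-1)$-disk $\{x\cdot e=0,\ |x|\le1\}$. That cone has height $t$, so its volume is $\omega_{n-1}t/n$, which equals $c_n\,t\,|B_1|$ with $c_n:=\omega_{n-1}/(n\omega_n)$. This gives \eqref{eq:convex-hull-volume}; the hypothesis $t\ge2$ is not even needed for this bound.

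The only delicate step is the first part: verifying that the non-minimality hypothesis holds wherever the lemma is applied, and that the cone $\Gamma$ is closed (this uses compactness of $\partial f(y)$ together with $0\notin\partial f(y)$). The second part is routine.
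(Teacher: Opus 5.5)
Your proposal is correct in substance and follows the paper's route, with one step that needs a repair (second paragraph).

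\emph{Volume bound.} Here you do what the paper does: normalize by $x\mapsto A^{-1}(x-z)$ and exhibit a cone inside the hull. Your cone over the equatorial disk has height $|Y|$ rather than the paper's $|Y|/2$, and you are right that $|Y|\ge2$ is then not needed.

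\emph{Normal assertion.} The paper invokes the normal-cone formula $N_K(y)=\{tq:t\ge0,\ q\in\partial f(y)\}$ for a convex sublevel set, via the convex multiplier rule. It justifies this by saying that $K$ has a ``strict interior point''. Your separation and max-formula argument is a self-contained proof of the needed inclusion.

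\emph{Your hypothesis.} Your caveat is correct. As literally stated the lemma fails without a Slater point, and your example $h+\dist(\cdot,K)^2$ is valid. The paper's phrase should be read as exactly your hypothesis $\inf f<h$. It holds in the application because $v(0)=0<h_j$.

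The step to repair is the separation. Separating $\nu$ from the closed convex cone $\Gamma$ gives only $w\cdot\nu>0$ and $p\cdot w\le0$ for all $p\in\partial f(y)$, not strict inequality. Strictness is essential. Take $f=|x|^2$, $h=1$, $y=e_1$ and $\nu=e_2$, which is not a normal. Separation (for instance, via projection onto $\Gamma$) returns $w=e_2$, for which $f'(y;w)=0$ and $f(y+tw)=1+t^2>1$, so no contradiction follows.

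The fix uses what you already established. Since $0\notin\partial f(y)$ and $\partial f(y)$ is compact and convex, there is $w_0$ with $\max_{p\in\partial f(y)}p\cdot w_0<0$; take $-w_0$ to be the point of $\partial f(y)$ nearest the origin. Replace $w$ by $w+\epsilon w_0$ with $\epsilon>0$ small. Then $(w+\epsilon w_0)\cdot\nu>0$ still holds, and the max formula gives $f'(y;w+\epsilon w_0)\le\epsilon\max_{p}p\cdot w_0<0$. With this change your contradiction argument goes through and the proof is complete.
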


\begin{proof}
The first assertion is the normal-cone formula for a convex sublevel set,
\[
 N_K(y)=\{tq:t\ge0,\ q\in\partial f(y)\}.
\]
It applies because $K$ has a strict interior point.  For completeness, this
formula follows from the convex multiplier rule applied to the minimization
of $-\nu\cdot x$ subject to $f(x)\le h$; the multiplier is positive because
$\nu\ne0$.

For the second assertion, apply the affine map $x\mapsto A^{-1}(x-z)$.  We
are reduced to the convex hull of $B_1$ and a point $Y$ with $|Y|\ge2$.
That hull contains a cone of height at least $|Y|/2$ whose base is an
$(n-1)$-ball of dimensional radius.  Its volume is at least
$c_n|Y||B_1|$.  Transforming back proves \eqref{eq:convex-hull-volume}.
\end{proof}

\begin{lemma}[Alexandrov maximum estimate]
\label{lem:appendix-alexandrov-maximum}
Let $D\subset\mathbb R^d$ be a bounded convex domain and let
$f\in C(\overline D)$ be convex, with $f=0$ on $\partial D$.  Then
\begin{equation}\label{eq:appendix-alexandrov-maximum}
  M_f(D)\,|D|\ge c_d\,\bigl|\min_D f\bigr|^d.
\end{equation}
\end{lemma}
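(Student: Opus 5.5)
The plan is the classical Alexandrov argument: compare the graph of $f$ with the cone over $D$ and compare gradient images. Set $m:=-\min_D f>0$; if $m=0$ there is nothing to prove. Choose $x_0\in D$ with $f(x_0)=-m$, and let $\Gamma$ be the cone with vertex $(x_0,-m)$ and base $\partial D\times\{0\}$. This $\Gamma$ is the convex function that is linear on each segment from $x_0$ to $\partial D$, with $\Gamma(x_0)=-m$ and $\Gamma=0$ on $\partial D$.

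The first step is the inclusion
\[
\partial\Gamma(x_0)\subset\partial f(D).
\]
Fix $p\in\partial\Gamma(x_0)$ and slide the affine function $\ell(y)=-m+p\cdot(y-x_0)$ downward until it supports $f$. Since $\Gamma$ lies above $\ell$ and vanishes on $\partial D$, we have $\ell\le0=f$ on $\partial D$. We also have $\ell(x_0)=f(x_0)$. Hence $\min_{\ol D}(f-\ell)\le0$ is attained at some point of $\ol D$, and if it occurs on $\partial D$ it also occurs at $x_0$.

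The delicate case is $\ell=0$ at some boundary point, where $\min(f-\ell)=0$. In that case $x_0$ itself is a minimizer. Therefore $\ell+\min(f-\ell)$ touches $f$ from below at an interior point, and $p\in\partial f(D)$. This gives the inclusion and the bound
\[
M_f(D)\ge|\partial\Gamma(x_0)|.
\]

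The second step is a lower bound for $|\partial\Gamma(x_0)|$. A vector $p$ lies in $\partial\Gamma(x_0)$ iff $p\cdot(y-x_0)\le m$ for all $y\in D$, i.e.
\[
\partial\Gamma(x_0)=m\,(D-x_0)^{\circ},
\]
the polar body scaled by $m$. This yields
\[
|\partial\Gamma(x_0)|=m^d\,|(D-x_0)^{\circ}|.
\]
The estimate then follows from the inverse Santaló (Bourgain–Milman) inequality $|K|\,|K^{\circ}|\ge c_d$, valid for convex bodies containing the origin in their interior. The constant $c_d$ is dimensional and no symmetry is needed, since the origin is interior.

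Alternatively, I would avoid Bourgain–Milman with an elementary John-ellipsoid argument. Normalize $D$ by an affine map so that $B_1\subset D-z\subset dB_1$. The quantity $M_f(D)|D|/m^d$ is affine invariant, since an affine change in $x$ scales gradient images inversely to volumes. After this normalization $|D|\sim 1$. The polar $(D-x_0)^{\circ}$ contains the ball $B_{1/\operatorname{diam}D}$, which has radius $\ge 1/(2d)$, so $|(D-x_0)^{\circ}|\ge c_d$.

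The main point needing care is the touching step in the first paragraph, specifically the boundary case. I expect it to be settled by the observation above that $x_0$ is always a candidate minimizer. The continuity of $f$ up to $\partial D$ guarantees that the minimum over $\ol D$ exists.
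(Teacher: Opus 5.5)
Your proof is correct and complete. It is the classical Alexandrov cone-comparison argument: you prove $m(\ol D-x_0)^{\circ}\subset\partial f(D)$, and you handle the boundary-touching case correctly by noting that $x_0$ is then itself a minimizer of $f-\ell$. The paper gives no proof of its own; it simply cites this standard estimate from \cite[Lemma~2.5]{Mooney2015}.

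For the polar-volume bound, your elementary John-ellipsoid route is enough. Bourgain--Milman is valid here but much stronger than needed, since only some dimensional constant $c_d$ is required.
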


This is the standard affine-invariant Alexandrov maximum estimate; see
\cite[Lemma~2.5]{Mooney2015}.  We state it here to make explicit the only
previous section-geometry estimate used below.

\begin{lemma}[Restriction and lifting of Monge--Amp\`ere mass]
\label{lem:appendix-mass-lifting}
Assume that
\begin{equation}\label{eq:appendix-strong-convexity}
 v(z)\ge v(y)+p\cdot(z-y)+\frac12|z-y|^2
 \qquad(p\in\partial v(y)).
\end{equation}
Let $V\subset\R^n$ be a linear subspace of dimension $d$, let
$w=v|_V$, and let $D\subset V\cap B_{r/4}$.  Then
\begin{equation}\label{eq:mass-lifting}
  M_v(B_r)\ge c_n r^{n-d}M_w(D).
\end{equation}
\end{lemma}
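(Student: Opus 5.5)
We need to prove the lifting inequality $M_v(B_r)\ge c_n r^{n-d}M_w(D)$. The plan is to show that every subgradient of the restriction $w$ at a point of $D$ lifts to a whole fibre of subgradients of $v$ at nearby points, and then compare volumes by Fubini.

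\textbf{Step 1: lifting a single subgradient.} Fix $y\in D$ and $\xi\in\partial w(y)\subset V$. For each $\eta\in V^\perp$ with $|\eta|\le c r$, consider the affine function
\[
\ell(z):=v(y)+(\xi+\eta)\cdot(z-y).
\]
On $V$ we have $v=w\ge \ell$, with equality at $y$. We slide $\ell$ downward until it supports $v$ on $\overline B_r$: let $t:=\min_{\overline B_r}(v-\ell)\le 0$. The aim is to show the minimum is attained in the interior $B_r$, so that $\xi+\eta\in\partial v(z)$ for some $z\in B_r$. The convex set $\partial v(z)$ on the open ball contains the slope of a touching affine function; by \Cref{lem:local-global-subgradient} it is a genuine subgradient.

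\textbf{Step 2: interior contact via strong convexity (the main obstacle).} Write $z=y'+\zeta$ with $y'\in V$ and $\zeta\in V^\perp$. The plan is to use \eqref{eq:appendix-strong-convexity} at a point of $V$, together with $w$ being supported by $\xi$, to obtain a lower bound of the form
\[
v(z)\ge \ell(z)+\tfrac12|z-y|^2-C|\eta||z-y|
\]
at points $z\in\partial B_r$. Since $|y|<r/4$, we have $|z-y|\ge 3r/4$ on $\partial B_r$, so with $|\eta|\le cr$ and $c$ small we get $v-\ell>0$ there, whereas $v-\ell=0$ at $y$. Hence the minimum of $v-\ell$ over $\overline B_r$ is negative or zero and is attained inside.

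The delicate point is justifying the displayed lower bound when $v$ is not differentiable at $y$. I would first try the following route. Pick $P\in\partial v(y)$; its projection $P_V$ lies in $\partial w(y)$, but it need not equal $\xi$. To handle general $\xi$, instead use the strong convexity of $w$ (inherited from \eqref{eq:appendix-strong-convexity}) and apply the estimate at the contact point itself, arguing by contradiction. If the minimum were on $\partial B_r$ at some $z$, the supporting plane there, with slope in $\partial v(z)$, combined with the quadratic growth \eqref{eq:appendix-strong-convexity} evaluated at $y$, gives
\[
v(y)\ge v(z)+p\cdot(y-z)+\tfrac12|y-z|^2.
\]
This contradicts minimality once $|y-z|\ge 3r/4$ and $|\eta|\le cr$. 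This argument avoids needing a derivative at $y$.

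\textbf{Step 3: volume count.} By Step 2,
\[
\partial v(B_r)\supset \{\xi+\eta:\ \xi\in\partial w(D),\ \eta\in V^\perp,\ |\eta|\le cr\}.
\]
This set is the product of $\partial w(D)\subset V$ with an $(n-d)$-dimensional ball of radius $cr$ in $V^\perp$. By Fubini its Lebesgue measure is $c_n r^{n-d}\,\mathcal H^d(\partial w(D))=c_n r^{n-d}M_w(D)$, which gives \eqref{eq:mass-lifting}. Measurability is standard, since the relevant image sets of subdifferentials of convex functions over Borel sets are Lebesgue measurable.
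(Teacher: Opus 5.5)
There is a genuine gap, and it is not about nondifferentiability at $y$. The inclusion you assert in Step~3,
\[
\partial v(B_r)\supset\{\xi+\eta:\ \xi\in\partial w(D),\ \eta\in V^\perp,\ |\eta|\le cr\},
\]
is false in general, because the fibre of $\partial v(B_r)$ over $\xi$ need not be centred at the origin of $V^\perp$. Take $n=2$, $V=\{x_2=0\}$, and $v(x)=\frac12|x|^2+Mx_2$ with $M>2r$; this $v$ is smooth. Then $w(x_1)=\frac12x_1^2$ and $\partial w(y)=\{y_1\}$. But $\partial v(B_r)=(0,M)+B_r$ contains no point $(y_1,\eta_2)$ with $|\eta_2|\le cr$. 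Correspondingly, the lower bound displayed in Step~2 fails. Here $v-\ell=\frac12|z-y|^2+(M-\eta_2)z_2$, which is negative on the lower part of $\partial B_r$, so the minimum over $\overline B_r$ sits on the boundary.

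Your contradiction argument cannot repair this. Let $z\in\partial B_r$ minimize $v-\ell$, and let $p\in\partial v(z)$ be the slope from the optimality condition, so that $\xi+\eta-p$ is an outward normal to $B_r$ at $z$. Then the inequality $v(y)\ge v(z)+p\cdot(y-z)+\frac12|y-z|^2$ becomes
\[
(v-\ell)(y)\ge(v-\ell)(z)+\tfrac12|y-z|^2 .
\]
This points the same way as minimality of $z$ and is perfectly compatible with $(v-\ell)(z)<0=(v-\ell)(y)$. For an arbitrary $p\in\partial v(z)$, the extra term $(p-\xi-\eta)\cdot(y-z)$ has no sign at all.

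The missing ingredient is the one you flagged yourself: the nontrivial half of the restriction rule $\partial w(y)=P_V\partial v(y)$. Every $\xi\in\partial w(y)$ is the projection $P_Vp$ of some $p\in\partial v(y)$. This follows from the convex optimality condition for minimizing $v(z)-\xi\cdot z$ under the affine constraint $z\in V$, or equivalently from a Hahn--Banach extension.

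With such a $p$, take any $q$ with $|q-p|<r/8$. Strong convexity based at $y$ with slope $p$ gives
\[
[v(z)-q\cdot z]-[v(y)-q\cdot y]\ge\tfrac12|z-y|^2-|q-p||z-y|>0\quad\text{on }\partial B_{r/2}(y),
\]
so $q\in\partial v(B_r)$. Choosing $q=p+\eta$ with $\eta\in V^\perp$ shows that the fibre over $\xi$ contains an $(n-d)$-ball of radius $r/8$ centred at $P_{V^\perp}p$. Its position depends on $\xi$, but its $\cH^{n-d}$-measure does not. The set is therefore not a product, yet Fubini still gives $|\partial v(B_r)|\ge c_nr^{n-d}M_w(D)$. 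This is exactly the paper's argument.
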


\begin{proof}
For $y\in V$, the standard subdifferential rule for restriction gives
\begin{equation}\label{eq:restriction-subgradient}
  \partial_Vw(y)=P_V\partial v(y).
\end{equation}
Indeed, one inclusion follows by projection.  Conversely, if
$q\in\partial_Vw(y)$, then $y$ minimizes $v(z)-q\cdot P_Vz$ under the affine
constraint $z\in V$.  The convex optimality condition for an affine
constraint gives a vector $\zeta\in V^\perp$ such that
$q+\zeta\in\partial v(y)$, proving the reverse inclusion.

Fix $y\in B_{r/4}$ and $p\in\partial v(y)$.  We claim that
\begin{equation}\label{eq:subgradient-ball}
  B_{r/8}(p)\subset\partial v(B_r).
\end{equation}
If $|q-p|<r/8$, minimize $v(z)-q\cdot z$ on
$\overline B_{r/2}(y)$.  On its boundary, \eqref{eq:appendix-strong-convexity}
gives
\[
 [v(z)-q\cdot z]-[v(y)-q\cdot y]
 \ge \frac12|z-y|^2-|q-p|\,|z-y|>0.
\]
The minimum is therefore attained in the interior of $B_{r/2}(y)\subset
B_r$, which proves \eqref{eq:subgradient-ball}.

Let $Q=\partial_Vw(D)$.  By \eqref{eq:restriction-subgradient}, for every
$q_0\in Q$ there are $y\in D$ and $p\in\partial v(y)$ with $P_Vp=q_0$.
Equation \eqref{eq:subgradient-ball} shows that the fiber of
$\partial v(B_r)$ over $q_0$ contains an $(n-d)$-dimensional ball of radius
$r/8$ in $V^\perp$.  Fubini's theorem in the orthogonal decomposition
$\R^n=V\oplus V^\perp$ yields
\[
 |\partial v(B_r)|\ge c_n r^{n-d}|Q|,
\]
which is \eqref{eq:mass-lifting}.
\end{proof}

\begin{proof}[Proof of \Cref{thm:appendix-covering}]
Set
\[
  \widetilde v(y):=v(y)+\frac12|y|^2.
\]
For $\widetilde p=p+x$ one has
\[
 \widetilde v(y)-\widetilde v(x)-\widetilde p\cdot(y-x)
 =v(y)-v(x)-p\cdot(y-x)+\frac12|y-x|^2.
\]
Thus
$S^{\widetilde v}_{h,\widetilde p}(x)\subset S^v_{h,p}(x)$, and it is
enough to prove the theorem for a function of the form
\begin{equation}\label{eq:strong-convex-form}
  v=v_0+\frac12|x|^2,
  \qquad v_0\ \text{convex}.
\end{equation}
Such a function satisfies \eqref{eq:appendix-strong-convexity}.

\smallskip
\noindent\emph{Step 1: at least $s+1$ John axes degenerate faster than
$\sqrt h$.}
Fix $x\in E$ and $p\in\partial v(x)$.  Translate $x$ to the origin and
subtract the supporting affine function, so that
\[
 x=0,\qquad p=0,\qquad v(0)=0,\qquad v\ge\frac12|x|^2.
\]
For small $h>0$, let $K_h=\overline{S^v_{h,0}(0)}$ and choose a John
ellipsoid
\[
  \mathcal E_h=z_h+A_hB_1,
  \qquad
  \mathcal E_h\subset K_h\subset z_h+C_nA_hB_1.
\]
Write its semiaxes as
\[
  d_1(h)\ge\cdots\ge d_n(h)>0.
\]
We claim that
\begin{equation}\label{eq:axis-degeneration}
  \frac{d_{n-s}(h)}{\sqrt h}\longrightarrow0
  \qquad(h\downarrow0).
\end{equation}

Local Lipschitz continuity gives $B_{ch}(0)\subset S^v_{h,0}(0)$ for all
small $h$, with $c=c(x,p)>0$.  Comparing widths in the upper John inclusion,
\begin{equation}\label{eq:smallest-axis-lower}
  d_n(h)\ge ch.
\end{equation}
The lower John inclusion and \eqref{eq:appendix-section-hypothesis} give
\begin{equation}\label{eq:john-product-upper}
  \prod_{i=1}^n d_i(h)\le C h^{(n+s)/2}.
\end{equation}

Suppose that \eqref{eq:axis-degeneration} fails.  Then for some $\delta>0$
and a sequence $h_j\downarrow0$,
\begin{equation}\label{eq:large-n-s-axis}
  d_{n-s}(h_j)\ge\delta\sqrt{h_j}.
\end{equation}
Equations \eqref{eq:smallest-axis-lower}--\eqref{eq:large-n-s-axis} imply
\begin{equation}\label{eq:all-short-axes-linear}
  ch_j\le d_i(h_j)\le Ch_j
  \qquad(n-s+1\le i\le n).
\end{equation}
Indeed,
\[
 d_{n-s+1}(h_j)(ch_j)^{s-1}
 (\delta\sqrt{h_j})^{n-s}
 \le \prod_{i=1}^n d_i(h_j)
 \le C h_j^{(n+s)/2}.
\]
The same bounds give
\begin{equation}\label{eq:ellipsoid-critical-lower}
  |\mathcal E_{h_j}|\ge c h_j^{(n+s)/2}.
\end{equation}

Let $e_j$ be a shortest principal direction of $\mathcal E_{h_j}$.  The
upper John inclusion and \eqref{eq:all-short-axes-linear} show that $K_{h_j}$
has width $O(h_j)$ in direction $e_j$.  At the two supporting points of
$K_{h_j}$ with outward normals $e_j$ and $-e_j$, respectively,
\Cref{lem:appendix-convex-facts} gives subgradients
$a_j^+e_j$ and $-a_j^-e_j$, where $a_j^\pm>0$.  Since the points lie on
$\{v=h_j\}$ and $0\in K_{h_j}$, the subgradient inequality and the width
bound imply $a_j^\pm\ge c>0$.  Local Lipschitz continuity bounds these
subgradients from above.  Since $K_{h_j}\subset B_{\sqrt{2h_j}}$, the
supporting points tend to the origin.  Passing to a subsequence and using the
closedness of the subdifferential graph,
\[
 e_j\to e,
 \qquad
 a^+e,-a^-e\in\partial v(0)
\]
for some $a^\pm>0$.  Consequently, for all sufficiently small $h$,
\begin{equation}\label{eq:fixed-strip}
  S^v_{h,0}(0)\subset\{|e\cdot x|\le Ch\}.
\end{equation}

Let
\[
  a:=\max\{t\in\R:te\in\partial v(0)\}.
\]
The maximum is finite and attained because the local subdifferential is
compact.  Choose $\Lambda>1+C|a|$, with $C$ as in
\eqref{eq:fixed-strip}.  Then
\begin{equation}\label{eq:section-engulfing}
  S^v_{h_j,0}(0)\subset S^v_{\Lambda h_j,ae}(0).
\end{equation}
We claim that
\begin{equation}\label{eq:long-point}
  R_j:=h_j^{-1}\sup\{e\cdot x:x\in S^v_{\Lambda h_j,ae}(0)\}
  \longrightarrow\infty.
\end{equation}
If $R_j\le R$ along a subsequence, set $b_j=(R+1)h_j$.  No point of the
hyperplane $\{e\cdot x=b_j\}$ belongs to the larger section, so
\[
  v(x)\ge a b_j+\Lambda h_j
  =\left(a+\frac{\Lambda}{R+1}\right)b_j
\]
there.  Applying this inequality at $b_jx/(e\cdot x)$ and using convexity
shows, after $j\to\infty$, that
\[
 v(x)\ge\left(a+\frac{\Lambda}{R+1}\right)e\cdot x
 \qquad(e\cdot x>0).
\]
For $e\cdot x\le0$ the same inequality follows from the supporting plane with
slope $ae$.  This contradicts the maximality of $a$ and proves
\eqref{eq:long-point}.

Choose $x_j$ in the closure of the larger section with
$e\cdot x_j\ge\frac12R_jh_j$.  By convexity and
\eqref{eq:section-engulfing},
\[
 \operatorname{co}(\mathcal E_{h_j}\cup\{x_j\})
 \subset\overline{S^v_{\Lambda h_j,ae}(0)}.
\]
Writing $\mathcal E_{h_j}=z_j+A_jB_1$, the strip
\eqref{eq:fixed-strip} gives
\[
 |A_je|\le Ch_j,
 \qquad |e\cdot z_j|\le Ch_j.
\]
Hence, for large $j$,
\[
 |A_j^{-1}(x_j-z_j)|
 \ge\frac{|e\cdot(x_j-z_j)|}{|A_je|}
 \ge cR_j.
\]
By \eqref{eq:ellipsoid-critical-lower} and
\Cref{lem:appendix-convex-facts},
\[
 |S^v_{\Lambda h_j,ae}(0)|
 \ge cR_j h_j^{(n+s)/2}.
\]
This contradicts \eqref{eq:appendix-section-hypothesis} for the fixed
supporting slope $ae$, because $R_j\to\infty$.  Thus
\eqref{eq:axis-degeneration} holds.

\smallskip
\noindent\emph{Step 2: arbitrarily large lower $(n-s)$-density of $M_v$.}
We show that for every $x\in E$ and every $\varepsilon>0$ there are
arbitrarily small radii $r>0$ such that
\begin{equation}\label{eq:large-density}
  M_v(B_r(x))>\varepsilon^{-1}r^{n-s}.
\end{equation}
Fix a supporting slope at $x$, translate $x$ to the origin, and subtract its
supporting affine function.  Let $d_1(h)\ge\cdots\ge d_n(h)$ be the John
semiaxes of $S^v_{h,0}(0)$ and define
\[
 I:=\min\left\{i:\liminf_{h\downarrow0}
           \frac{d_i(h)}{\sqrt h}=0\right\}.
\]
Step 1 gives $I\le n-s$.  Given $\delta>0$, choose $h_j\downarrow0$ so that
\begin{equation}\label{eq:I-axis-choice}
 d_I(h_j)<\delta\sqrt{h_j},
 \qquad
 d_i(h_j)\ge\eta\sqrt{h_j}\quad(i<I),
\end{equation}
where $\eta>0$ is independent of $j$.

Use principal-axis coordinates for the John ellipsoid, let $V_j$ be the span
of the last $n-I+1$ directions, put $w_j=v|_{V_j}$, and set
\[
 D_j=S^{w_j}_{h_j,0}(0)
     =S^v_{h_j,0}(0)\cap V_j.
\]
Projection of the upper John ellipsoid onto $V_j$, together with
\eqref{eq:john-product-upper} and \eqref{eq:I-axis-choice}, gives
\begin{equation}\label{eq:slice-volume}
 |D_j|_{V_j}
 \le C\prod_{i=I}^n d_i(h_j)
 \le C\eta^{-(I-1)}h_j^{(n+s-I+1)/2}.
\end{equation}
The Alexandrov maximum estimate, \Cref{lem:appendix-alexandrov-maximum},
in the $(n-I+1)$-dimensional space $V_j$, applied to $w_j-h_j$ on $D_j$,
yields
\[
 M_{w_j}(D_j)|D_j|_{V_j}\ge c h_j^{n-I+1}.
\]
Therefore
\begin{equation}\label{eq:slice-mass}
 M_{w_j}(D_j)
 \ge c\eta^{I-1}h_j^{(n-s-I+1)/2}.
\end{equation}

The upper John inclusion and $0\in S^v_{h_j,0}(0)$ imply
$D_j\subset B_{C_nd_I(h_j)}(0)$.  Set
$r_j=4C_nd_I(h_j)$.  For large $j$, the ball $B_{r_j}$ is compactly
contained in $B_1$, and \Cref{lem:appendix-mass-lifting} with
$d=n-I+1$ gives
\begin{align}
 M_v(B_{r_j})
 &\ge c r_j^{I-1}h_j^{(n-s-I+1)/2}\notag\\
 &=c\left(\frac{\sqrt{h_j}}{r_j}\right)^{n-s-I+1}
      r_j^{n-s}\notag\\
 &\ge c\delta^{-(n-s-I+1)}r_j^{n-s}.
\label{eq:density-from-axis}
\end{align}
Since $n-s-I+1\ge1$ and $\delta>0$ is arbitrary,
\eqref{eq:density-from-axis} proves \eqref{eq:large-density}.

\smallskip
\noindent\emph{Step 3: covering.}
Fix $\beta>0$ and $\varepsilon>0$.  By \eqref{eq:large-density}, every point
of $E\cap B_{1-\beta}$ is the center of arbitrarily small balls
$B_r(x)\subset B_{1-\beta/2}$ satisfying
\[
 r^{n-s}<\varepsilon M_v(B_r(x)).
\]
Vitali's theorem gives a pairwise disjoint countable family
$\{B_{r_i}(x_i)\}$ whose fivefold enlargements cover
$E\cap B_{1-\beta}$.  Since $v$ is locally Lipschitz,
$M_v(B_{1-\beta/2})<\infty$, and hence
\[
 \sum_i(10r_i)^{n-s}
 \le C_n\varepsilon\sum_iM_v(B_{r_i}(x_i))
 \le C_n\varepsilon M_v(B_{1-\beta/2}).
\]
The radii can be required to be smaller than any prescribed number.  Letting
$\varepsilon\downarrow0$ gives
\[
 \cH^{n-s}(E\cap B_{1-\beta})=0.
\]
Finally let $\beta\downarrow0$ through a countable sequence.
\end{proof}

\begin{remark}
When $s=1$, the argument is Mooney's proof.  The additional observation for
$s>1$ is the lower bound $d_i(h)\gtrsim h$ for every John axis, obtained from
local Lipschitz continuity.  If $d_{n-s}(h)\gtrsim\sqrt h$, this forces the
John ellipsoid to have the critical volume $\gtrsim h^{(n+s)/2}$; tilting to
an extremal supporting slope then enlarges that volume by an unbounded
factor.
\end{remark}

\clearpage
\section{A local form of the Chou--Wang Pogorelov estimate}
\label{app:localized-chou-wang}

The regularity argument uses the bounded-domain form of the Pogorelov estimate
recorded by Mooney in \cite[Theorem~2.5 and Remark~2.6]{MooneyRemarks2025}.
Chou and Wang state their estimate with a $k$-admissible comparison function
\cite[Theorem~4.1]{ChouWang2001}.  The two points that are not written out in
Mooney's note are that the calculation localizes to arbitrary bounded domains
and that the comparison-function hypothesis may be weakened to a linearized
inequality.  We verify both points here.  The long maximum-principle
calculation itself remains the published calculation of Chou and Wang.

For a smooth $k$-admissible function $u$, write
\[
  L_u\phi:=\sigma_k^{ij}(D^2u)\phi_{ij},
  \qquad
  F(A):=\sigma_k(A)^{1/k}.
\]

\begin{lemma}[The analytic input from Chou--Wang]
\label{lem:CW-interior-core}
Let $G\subset\R^n$ be a bounded smooth domain.  Suppose that
$u,\rho\in C^\infty(\overline G)$, that $D^2u\in\Gamma_k$, and that
\[
  \sigma_k(D^2u)=1\quad\text{in }G,
  \qquad
  \rho>0\quad\text{in }G,
  \qquad
  \rho=0\quad\text{on }\partial G.
\]
Assume further that
\begin{equation}\label{eq:CW-core-input}
  F^{ij}(D^2u)\rho_{ij}\ge-1\quad\text{in }G.
\end{equation}
Then
\begin{equation}\label{eq:CW-interior-core}
  \sup_G\rho^4|D^2u|
  \le C\!\left(n,k,
    \|Du\|_{L^\infty(G)},
    \|D\rho\|_{L^\infty(G)}\right).
\end{equation}
No boundary curvature enters this estimate.
\end{lemma}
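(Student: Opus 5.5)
The proof will be the Chou--Wang maximum-principle computation for a Pogorelov-type test function. Throughout we work with the equivalent equation $F(D^2u)=1$ and the linearized operator $F^{ij}=F^{ij}(D^2u)$. The test function is
\[
\Phi:=4\log\rho+\log\lambda_{\max}(D^2u)+\frac{a}{2}|Du|^2,
\]
where $a$ is a constant to be chosen from $n,k,\|Du\|_{L^\infty}$. Since $\rho=0$ on $\partial G$ and $\rho>0$ inside, $\Phi\to-\infty$ at $\partial G$, so $\Phi$ attains an interior maximum at some $x_0$. This is the only place the domain enters, and it explains why no boundary curvature appears. To handle the nonsmoothness of $\lambda_{\max}$, we rotate coordinates at $x_0$ so that $D^2u(x_0)$ is diagonal with $u_{11}=\lambda_{\max}$, and replace $\lambda_{\max}$ by $u_{11}$, which touches it from below. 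At $x_0$ we then have $D\Phi=0$ and $F^{ij}\Phi_{ij}\le0$.

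Next we differentiate the equation. Once in $e_1$ gives $F^{ij}u_{ij1}=0$; twice gives $F^{ij}u_{ij11}=-F^{ij,rs}u_{ij1}u_{rs1}\ge0$ by concavity of $F$, and the Chou--Wang refinement keeps the good part of this concavity term to absorb third derivatives. The gradient term contributes $aF^{ij}u_{ki}u_{kj}\ge aF^{11}u_{11}^2$ together with lower-order terms $aF^{ij}u_ku_{kij}=0$. The $\rho$ term contributes $4F^{ij}\rho_{ij}/\rho-4F^{ij}\rho_i\rho_j/\rho^2$, and hypothesis \eqref{eq:CW-core-input} bounds the first piece below by $-4/\rho$. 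The critical-point relation $u_{11i}/u_{11}=-4\rho_i/\rho-au_ku_{ki}$ converts the bad third-derivative term $F^{ii}u_{11i}^2/u_{11}^2$ into quantities involving $|D\rho|^2/\rho^2$ and $a^2|Du|^2u_{ii}^2$.

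The main obstacle is the case analysis inside the published Chou--Wang calculation: some eigenvalues are negative, $F^{ii}$ is not uniformly bounded, and one has to split indices according to whether $u_{ii}\ge-\delta u_{11}$. That calculation is exactly \cite[Theorem~4.1]{ChouWang2001}, and we will quote its structure rather than redo it. The point to verify is that the only place the comparison function enters is through the lower bound on $F^{ij}\rho_{ij}$ and through $|D\rho|$. Admissibility of $\rho$ is used only to get $F^{ij}\rho_{ij}\ge0$, and the weaker $\ge-1$ costs one extra term $-4/\rho$. After multiplying by $\rho^2$, this term is dominated by $\rho\,F^{11}u_{11}^2$ times a small factor, using $\sum F^{ii}\ge c(n,k)>0$ and $F^{11}u_{11}\ge c\,u_{11}/\sum u_{ii}$ estimates from Chou--Wang.

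The conclusion at $x_0$ is an inequality of the form $\rho^4u_{11}\le C$, hence $\Phi(x_0)\le C$. Since $\Phi\le\Phi(x_0)$ everywhere and $|D^2u|\le C(n)\lambda_{\max}$ plus lower-order terms by admissibility (the trace is positive, so negative eigenvalues are bounded by $(n-1)\lambda_{\max}$), this yields \eqref{eq:CW-interior-core} with a constant depending only on $n$, $k$, $\|Du\|_{L^\infty}$ and $\|D\rho\|_{L^\infty}$.
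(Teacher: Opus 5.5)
Your route is essentially the paper's. Both arguments reduce the lemma to the published Chou--Wang maximum-principle computation for a test function of the form $\rho^4\phi(|Du|^2/2)u_{\xi\xi}$. In both, interiority of the maximum comes from $\rho=0$ on $\partial G$, so no boundary curvature enters, and $D^2\rho$ enters only through $F^{ij}\rho_{ij}$.

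One correction to your account of what has to be checked. In Chou--Wang it is the comparison function $w$, not $\rho=w-u$, that is admissible. Their step (4.9) therefore already uses exactly $F^{ij}\rho_{ij}\ge -F^{ij}u_{ij}=-1$. The condition $F^{ij}\rho_{ij}\ge 0$ is not what appears there; for instance, $\rho=-u$ gives $F^{ij}\rho_{ij}=-1$ exactly. So the hypothesis of the lemma is literally Chou--Wang's input, not a weakening of it. The term $-4/\rho$ is already present and handled in their calculation, and your absorption paragraph is redundant, though harmless.

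Also, you state that you will quote Chou--Wang's case analysis (their (4.10)--(4.14)) rather than redo it. If so, keep their gradient factor $\phi(|Du|^2/2)$ instead of switching to $e^{a|Du|^2/2}$. That analysis was carried out for their choice of $\phi$, and it does not transfer verbatim to a different auxiliary function.
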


\begin{proof}
Chou and Wang maximize
\[
  \rho^4\,
  \phi\!\left(\frac{|Du|^2}{2}\right)u_{\xi\xi}
\]
and derive their equations (4.4)--(4.14) in the proof of
\cite[Theorem~4.1]{ChouWang2001}.  The factor $\rho^4$ makes every positive
maximum interior.  Equations (4.4)--(4.8) use only $\rho$ and $D\rho$, the
differentiated equation, concavity of $F$, and the symmetric-function
inequalities established earlier in that paper.  The only occurrence of the
second derivatives of the comparison function is equation (4.9), whose
precise input is
\[
  F^{ij}\rho_{ij}\ge-F^{ij}u_{ij}=-1.
\]
After this inequality, equations (4.10)--(4.14) use no further property of
the comparison function.  Replacing their equation (4.9) by
\eqref{eq:CW-core-input} therefore gives \eqref{eq:CW-interior-core} with the
same calculation.  Since the maximum is interior, no second fundamental form
or other geometric datum of $\partial G$ is used.
\end{proof}

\begin{proposition}[Localized Chou--Wang estimate]
\label{prop:localized-chou-wang}
Let $D\subset\R^n$ be bounded and open.  Let
\[
 u,w\in C^\infty(D)\cap C(\overline D),
 \qquad D^2u\in\Gamma_k,
 \qquad \sigma_k(D^2u)=1\quad\text{in }D.
\]
Assume that
\[
  w>u\quad\text{in }D,
  \qquad w=u\quad\text{on }\partial D,
  \qquad L_uw\ge0\quad\text{in }D,
\]
and that $Du,Dw\in L^\infty(D)$.  Then
\begin{equation}\label{eq:localized-CW}
  \sup_D(w-u)^4|D^2u|
  \le C\!\left(n,k,
    \|Du\|_{L^\infty(D)},
    \|Dw\|_{L^\infty(D)}\right).
\end{equation}
In particular, no smoothness, convexity, or curvature assumption on
$\partial D$ is required.
\end{proposition}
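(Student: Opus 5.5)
The plan is to reduce \Cref{prop:localized-chou-wang} to \Cref{lem:CW-interior-core}. We apply that lemma on smooth subdomains $G_t$ exhausting $D$, using the comparison function $\rho=\rho_t:=c(w-u-t)$ with a constant $c>0$ chosen so that \eqref{eq:CW-core-input} holds. Then we let $t\downarrow0$.

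\emph{Step 1: the linearized inequality.} We first verify that $\rho=w-u$ satisfies \eqref{eq:CW-core-input} up to a constant. Since $F=\sigma_k^{1/k}$ and $\sigma_k(D^2u)=1$, we have
\[
F^{ij}(D^2u)=\tfrac1k\sigma_k^{ij}(D^2u).
\]
Euler's identity \eqref{eq:euler} gives $F^{ij}u_{ij}=F(D^2u)=1$. Hence
\[
F^{ij}(w-u)_{ij}=\tfrac1kL_uw-1\ge-1.
\]
So $\rho=w-u$ satisfies \eqref{eq:CW-core-input} exactly, with $c=1$. Subtracting a constant $t$ does not change second derivatives, so $\rho_t=w-u-t$ satisfies it as well.

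\emph{Step 2: smooth exhaustion.} Since $w-u\in C^\infty(D)$, Sard's theorem lets us choose regular values $t\in(0,\tau)$ of $w-u$, with $\tau$ arbitrarily small. Let $G_t:=\{w-u>t\}$. Because $w-u$ is continuous on $\overline D$ and vanishes on $\partial D$, the set $\{w-u\ge t\}$ is a compact subset of $D$. Therefore $G_t\Subset D$ is a bounded open set with smooth boundary $\{w-u=t\}$; if it is disconnected, we apply the lemma on each component. The functions $u$ and $\rho_t$ are smooth on $\overline G_t$. Moreover $\rho_t>0$ in $G_t$ and $\rho_t=0$ on $\partial G_t$. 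We also have
\[
\|Du\|_{L^\infty(G_t)}\le\|Du\|_{L^\infty(D)},\qquad \|D\rho_t\|_{L^\infty(G_t)}\le\|Du\|_{L^\infty(D)}+\|Dw\|_{L^\infty(D)}.
\]
\Cref{lem:CW-interior-core} therefore gives
\[
\sup_{G_t}(w-u-t)^4|D^2u|\le C\bigl(n,k,\|Du\|_{L^\infty(D)},\|Dw\|_{L^\infty(D)}\bigr),
\]
with a constant independent of $t$. This uses the fact that the lemma's constant involves no boundary geometry, which is exactly the point emphasized there.

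\emph{Step 3: limit.} Fix $x\in D$. For all small regular values $t<(w-u)(x)$, the point $x$ lies in $G_t$, so
\[
(w(x)-u(x)-t)^4|D^2u(x)|\le C.
\]
Letting $t\downarrow0$ along regular values gives $(w-u)^4|D^2u|(x)\le C$. Taking the supremum over $x\in D$ proves \eqref{eq:localized-CW}.

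The only delicate point is whether \Cref{lem:CW-interior-core} really needs $\rho\in C^\infty(\overline G)$ and smooth $\partial G$. Both are secured by the regular-value construction. The argument does not need the full $\rho>0$ together with $\rho=0$ on $\partial D$ for the original, possibly rough domain $D$. I expect no genuine obstacle beyond this bookkeeping.
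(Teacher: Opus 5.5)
Your proposal is correct and follows the paper's proof essentially step for step. The paper likewise derives $F^{ij}(D^2u)(w-u)_{ij}\ge-1$ from Euler's identity and $L_uw\ge0$, and applies \Cref{lem:CW-interior-core} to $\rho-\varepsilon$ on each component of a regular superlevel set $\{w-u>\varepsilon\}\Subset D$. It then sends $\varepsilon\downarrow0$ pointwise along Sard regular values.
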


\begin{proof}
Put $\rho=w-u$.  Along the solution $\sigma_k(D^2u)=1$,
\begin{equation}\label{eq:Fij-scaled}
  F^{ij}(D^2u)=\frac1k\sigma_k^{ij}(D^2u).
\end{equation}
Thus $L_uw\ge0$ gives $F^{ij}w_{ij}\ge0$.  Euler's identity for the
one-homogeneous function $F$ gives
\[
  F^{ij}(D^2u)u_{ij}=F(D^2u)=1,
\]
and hence
\begin{equation}\label{eq:key-CW-input}
  F^{ij}(D^2u)\rho_{ij}\ge-1.
\end{equation}
Equivalently,
\[
  L_u\rho\ge-k,
  \qquad L_uu=k.
\]
This proves the weakening observed in
\cite[Remark~2.6]{MooneyRemarks2025}: in the Chou--Wang proof,
admissibility of $w$ is used only to obtain the nonnegative contraction
$F^{ij}w_{ij}$.

It remains to localize.  Let $\varepsilon>0$ be a regular value of $\rho$ and
set
\[
  D_\varepsilon:=\{x\in D:\rho(x)>\varepsilon\}.
\]
Because $\rho\in C(\overline D)$ and $\rho=0$ on $\partial D$,
\[
  \overline{D_\varepsilon}\Subset D.
\]
Every connected component $G$ of $D_\varepsilon$ is a bounded smooth domain.
On $G$, the function $\rho_\varepsilon:=\rho-\varepsilon$ is positive,
vanishes on $\partial G$, has the same first and second derivatives as
$\rho$, and satisfies \eqref{eq:key-CW-input}.  The restrictions of $u$ and
$\rho_\varepsilon$ are smooth on a neighborhood of $\overline G$.
\Cref{lem:CW-interior-core} therefore applies on every component with the
same constant, giving
\[
  \sup_{D_\varepsilon}(\rho-\varepsilon)^4|D^2u|
  \le C\!\left(n,k,
    \|Du\|_{L^\infty(D)},
    \|Dw\|_{L^\infty(D)}\right).
\]
By Sard's theorem, choose positive regular values
$\varepsilon_j\downarrow0$.  For each fixed $x\in D$, one has
$x\in D_{\varepsilon_j}$ for all large $j$.  Letting $j\to\infty$ and then
taking the supremum over $x$ proves \eqref{eq:localized-CW}.
\end{proof}

\begin{corollary}[Admissible-barrier form]
\label{cor:admissible-CW-barrier}
In \Cref{prop:localized-chou-wang}, the hypothesis $L_uw\ge0$ is automatic
if $D^2w\in\Gamma_k$.
\end{corollary}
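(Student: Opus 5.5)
The statement to prove is \Cref{cor:admissible-CW-barrier}: if $D^2w\in\Gamma_k$ in $D$, then $L_uw=\sigma_k^{ij}(D^2u)w_{ij}\ge0$ there. My plan is to derive this pointwise from the linearized G\aa rding inequality \eqref{eq:garding-linearized}, and then fall back on a cone-convexity argument to cover the case $w_{ij}$ lies only in the closure of $\Gamma_k$.

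Fix $x\in D$ and put $A:=D^2u(x)\in\Gamma_k$ and $B:=D^2w(x)\in\Gamma_k$. Apply \eqref{eq:garding-linearized} to $F=\sigma_k^{1/k}$:
\[
DF(A):B\ge F(B)>0 .
\]
Since $\sigma_k(A)=1$, the chain rule gives $DF(A)=\tfrac1k\sigma_k(A)^{1/k-1}\sigma_k^{ij}(A)=\tfrac1k\sigma_k^{ij}(A)$. This is exactly \eqref{eq:Fij-scaled}. Therefore
\[
L_uw(x)=\sigma_k^{ij}(A)B_{ij}=k\,DF(A):B\ge kF(B)>0 .
\]
This gives the strict inequality, and a fortiori the hypothesis $L_uw\ge0$ of \Cref{prop:localized-chou-wang}.

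The inequality \eqref{eq:garding-linearized} is stated in \S2 without full justification beyond concavity, so I would confirm the step it relies on. That step is $F(A)=DF(A):A$, which is Euler's identity \eqref{eq:euler}. It holds because $F$ is one-homogeneous and differentiable on the open cone $\Gamma_k$, and concavity of $F$ on the convex cone $\Gamma_k$ then gives the tangent-plane inequality.

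The main (mild) obstacle would arise only if one wanted the closed-cone case $D^2w\in\overline{\Gamma_k}$. There I would approximate $B$ by $B+\epsilon I\in\Gamma_k$, note $\sigma_k^{ij}(A)\epsilon\delta_{ij}\to0$, and pass to the limit to get $L_uw\ge0$. For the corollary as stated, the open-cone argument above suffices. In the paper's application, $w=\delta Q-t_j$ has constant Hessian $\delta D^2Q\in\Gamma_k$ by \Cref{lem:saddle}, so the corollary applies directly.
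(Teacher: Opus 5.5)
Your proof is correct and is essentially the paper's argument: the tangent-plane inequality for the concave, one-homogeneous $F=\sigma_k^{1/k}$ gives $F^{ij}(D^2u)w_{ij}\ge F(D^2w)>0$, and the rescaling $F^{ij}=\frac1k\sigma_k^{ij}$ on $\{\sigma_k=1\}$ turns this into $L_uw>0$. Your closed-cone approximation is valid but not needed for the statement as posed.
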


\begin{proof}
The function $F=\sigma_k^{1/k}$ is concave and one-homogeneous on $\Gamma_k$.
For $A,B\in\Gamma_k$,
\[
  F(B)\le F(A)+DF(A):(B-A)=DF(A):B.
\]
Taking $A=D^2u$ and $B=D^2w$ yields
$F^{ij}(D^2u)w_{ij}\ge F(D^2w)>0$.  By
\eqref{eq:Fij-scaled}, this is equivalent to $L_uw>0$.
\end{proof}

\begin{remark}[Use in the main proof]
\label{rem:CW-current-application}
On the regular level-set domains $D_j$ in \Cref{lem:barrier-regularity}, the
comparison function is the quadratic barrier $w-t_j$.  Its Hessian belongs
to $\Gamma_k$, so \Cref{cor:admissible-CW-barrier} applies directly.  Thus
the regularity proof uses the admissible-barrier form of the published
Chou--Wang estimate; the linearized formulation above is included to justify
the stronger localized theorem recorded by Mooney.
\end{remark}

\clearpage
\bibliographystyle{amsalpha}
\bibliography{references}

\end{document}